\newcommand{\dntri}{\bigtriangledown}
\newcommand{\uptri}{\triangle}
\newcommand{\HF}{\mathcal{H}}
\newcommand{\PP}{\mathbb{P}}
\newcommand{\ZZ}{\mathbb{Z}}
\newcommand{\PS}{\mathfrak{S}}
\DeclareMathOperator{\charf}{char}
\DeclareMathOperator{\Mac}{Mac}
\DeclareMathOperator{\per}{perm}
\DeclareMathOperator{\rank}{rank}
\DeclareMathOperator{\sgn}{sgn}
\DeclareMathOperator{\msgn}{msgn}
\DeclareMathOperator{\lpsgn}{lpsgn}
\DeclareMathOperator{\soc}{soc}
\DeclareMathOperator{\Hom}{Hom}
\newcommand{\st}{\; \mid \;}
\def\urltilda{\kern -.15em\lower .7ex\hbox{\~{}}\kern .04em}
\newcommand{\flfr}[2]{\left\lfloor\frac{#1}{#2}\right\rfloor}
\numberwithin{figure}{section}
\numberwithin{equation}{section}
\newtheorem{theorem}{Theorem}[section]
\newtheorem{lemma}[theorem]{Lemma}
\newtheorem{proposition}[theorem]{Proposition}
\newtheorem{corollary}[theorem]{Corollary}
\newtheorem{conjecture}[theorem]{Conjecture}
\theoremstyle{definition}
\newtheorem{definition}[theorem]{Definition}
\newtheorem{remark}[theorem]{Remark}
\newtheorem{example}[theorem]{Example}
\begin{document}

\title{The weak Lefschetz property for monomial ideals of small type}

\author[D.\ Cook II]{David Cook II${}^{\star}$}
\address{Department of Mathematics \& Computer Science, Eastern Illinois University, Charleston, IL 46616}
\email{\href{mailto:dwcook@eiu.edu}{dwcook@eiu.edu}}

\author[U.\ Nagel]{Uwe Nagel}
\address{Department of Mathematics, University of Kentucky, 715 Patterson Office Tower, Lexington, KY 40506-0027}
\email{\href{mailto:uwe.nagel@uky.edu}{uwe.nagel@uky.edu}}

\thanks{
    Part of the work for this paper was done while the authors were partially supported by the National Security Agency
    under Grant Number H98230-09-1-0032.
    The second author was also partially supported by the National Security Agency under Grant Number H98230-12-1-0247
    and by the Simons Foundation under grants \#208869 and \#317096.\\
    \indent ${}^{\star}$ Corresponding author.}

\keywords{Monomial ideals, weak Lefschetz property, determinants, lozenge tilings, non-intersecting lattice paths, perfect matchings, enumeration}
\subjclass[2010]{05A15, 05B45, 05E40, 13E10}

\begin{abstract}
    In this work a combinatorial approach towards the weak Lefschetz property is developed that relates this property to enumerations
    of signed perfect matchings as well as to  enumerations of signed families of non-intersecting lattice paths in certain triangular regions.
    This connection is used to study Artinian quotients by monomial ideals of a three-dimensional polynomial ring.
Extending a main result in the recent memoir \cite{BMMNZ}, we completely classify the quotients of type two that have      the weak Lefschetz property in characteristic zero.
We also derive results in positive characteristic for quotients whose type is at most two.
\end{abstract}

\maketitle


\section{Introduction} \label{sec:intro}

A standard graded Artinian algebra $A$ over a field $K$ is said to have the \emph{weak Lefschetz property}
if there is a linear form $\ell \in A$ such that the multiplication map $\times \ell : [A]_i \rightarrow [A]_{i+1}$ has maximal rank
for all $i$ (i.e., it is injective or surjective).  The algebra $A$ has the  \emph{strong Lefschetz property} if
$\times \ell^d : [A]_i \rightarrow [A]_{i+d}$ has maximal rank for all $i$ and $d$. The names are motivated by the conclusion of
the Hard Lefschetz Theorem on the cohomology ring of a compact K\"ahler manifold. Many algebras are expected to have the Lefschetz
properties. However, deciding this problem is often very challenging.

The presence of the weak Lefschetz property has profound consequences for an algebra (see \cite{HMNW}). For example, Stanley used this in his contribution \cite{St-faces} towards the proof of the so-called $g$-Theorem that characterizes the face vectors of simplicial polytopes.
It has been a longstanding conjecture whether this characterization extends to the  face vectors of all triangulations of a sphere. In fact, this would be one of the consequences if one can show the so-called algebraic $g$-Conjecture,  which posits that a certain algebra has the strong Lefschetz property  (see \cite{NS1} and \cite{NS2}).  Although there has been a flurry of papers studying the Lefschetz properties
in the last decade (see, e.g.,  \cite{BMMNZ2, BK, BK-p, CGJL, GIV, HSS, HMMNWW, KRV, KV, LZ, MMO, MMN-2012}), we currently seem far
from being able to decide the above conjectures. Indeed, the need for new methods has led us to consider lozenge tilings, perfect
matchings, and families of non-intersecting lattice paths. We use this approach to establish new results about the presence or the
absence of the weak Lefschetz property for quotients of a polynomial ring $R$  in three variables. This is the first open case as any Artinian quotient of  a polynomial ring in two variables has even the strong Lefschetz property in characteristic zero \cite{HMNW}.

If $I$ is a monomial ideal, then $R/I$ is Artinian of type one if and only if  $I$ is generated by the  powers of the three variables. It is well-known that in this case  $R/I$ has the  Lefschetz
properties if the base field has characteristic zero (see \cite{Stanley-1980, ikeda, Wa, BTK}).   We extent this result by providing a version for base fields of  arbitrary characteristic
(see Theorem~\ref{thm:type-one}).

Monomial algebras $R/I$ of type two were considered in the recent memoir  \cite{BMMNZ}.
One of its main results  says that, in characteristic zero,  these algebras have the
weak Lefschetz property, provided they are also level. Examples show that this may fail if one drops the level assumption or if $K$ has positive characteristic. However, the intricate proof in the level case of \cite[Theorem 6.2]{BMMNZ} does not give any insight when
such failures occur. We resolve this by completely classifying all type two algebras that have the weak Lefschetz property if
the characteristic is zero or large enough (see Theorem~\ref{thm:type-two} and Proposition~\ref{pro:char-0-to-p}).

The structure of this paper is as follows.  In Section~\ref{sec:wlp}, we recall or derive some general results about
the presence of the weak Lefschetz property.  In Section~\ref{sec:tri}, we describe a key relation between a  monomial ideal and a triangular region, a certain planar region, as introduced in \cite{CN-resolutions}.  In Section~\ref{sec:enum}, we consider signed lozenge tilings of a triangular region, using two a priori different signs. We show that both signs lead to enumerations of signed lozenge tilings that completely control the presence of the weak Lefschetz property, regardless of the characteristic of the base field. In Section~\ref{sec:enumeration} we work out some enumerations explicitly. We describe some rather general techniques and then illustrate them by evaluation certain determinants. Sections ~\ref{sec:wlp} to \ref{sec:enumeration} constitute our combinatorial approach towards the Lefschetz properties of monomial ideals. We then apply it to study monomial algebras of type one and two in Section~\ref{sec:type1} and Section~\ref{sec:type2}, respectively.

\section{The weak Lefschetz property} \label{sec:wlp}

There are some general results that are helpful in order to determine the presence or absence of the weak Lefschetz
property. We recall or derive these tools here.

Throughout this paper, let $R = K[x_1,\ldots,x_n]$ be a standard graded polynomial ring in $n$ variables over a field $K$. Furthermore, all $R$-modules are assumed to be finitely generated and graded. The \emph{Hilbert function} of  an $R$-module $M$ is the function $h_M: \ZZ \to \ZZ$ defined by $h_M (j) = \dim_K [M]_j$.
 Let $M$ be an Artinian $R$-module. The
\emph{socle} of $M$, denoted $\soc{M}$, is the annihilator of $\mathfrak{m} = (x_1, \ldots, x_n)$, the homogeneous
maximal ideal of $R$, that is, $\soc M = \{y \in M \st y \cdot \mathfrak{m} = 0\}$.  The \emph{type} of $M$ is the
is the $K$-dimension of $\soc{M}$. The \emph{socle degree} or \emph{Castelnuovo-Mumford regularity} of $M$ is the maximum
degree of a non-zero element in $\soc{M}$. The module $M$ is said to be \emph{level} if all socle generators have the same
degree, i.e., its socle is concentrated in one degree.

Alternatively, assume that the minimal free resolution of $M$ over $R$ ends with a free module $\bigoplus_{i=1}^{m}
R(-t_i),$ where $t_1 \le \cdots \le t_m$  for all $i$. Then $M$ has $m$ socle generators with
degrees $t_1 - n, \ldots, t_m - n$. Thus, $M$ is level if and only if $m = 1$.
\smallskip

It was observed in \cite[Proposition~2.1(a)]{MMN-2011} that once multiplication by a general linear form on a quotient of $R$ is surjective, then it remains surjective. This can be extended to modules.

\begin{lemma} \label{lem:mod-surj}
    Let $M$ be a graded $R$-module such that the degrees of its minimal generators are at most $d$. Let $\ell \in R$ be
    a linear form. If the map $\times\ell: [M]_{d-1} \rightarrow [M]_{d}$ is surjective, then the map
    $\times\ell: [M]_{j-1} \rightarrow [M]_{j}$ is surjective for all $j \geq d$.
\end{lemma}

\begin{proof}
    Consider the exact sequence $[M]_{d-1} \stackrel{\times\ell}{\longrightarrow} [M]_{d} \rightarrow [M/\ell M]_{d} \rightarrow 0.$
    Notice the first map is surjective if and only if $[M/\ell M]_{d} = 0.$ Thus, the assumption gives $[M/\ell M]_{d} = 0$.
    Hence $[M/\ell M]_{j+1}$ is zero for all $j \geq d$ because $M$ does not have minimal generators having a degree
    greater than $d$, by assumption.
\end{proof}

As a consequence, we note a slight  generalization of \cite[Proposition~2.1(b)]{MMN-2011}, which considers the case of level algebras.

\begin{corollary}\label{cor:inj}
    Let $M$ be an Artinian graded $R$-module such that the degrees of its non-trivial socle elements are at least
    $\geq d-1$. Let $\ell \in R$ be a linear form. If the map $\times \ell: [M]_{d-1} \rightarrow [M]_{d}$ is
    injective, then the map $\times \ell: [M]_{j-1} \rightarrow [M]_{j}$ is injective for all $j \leq d$.
\end{corollary}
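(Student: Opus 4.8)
The plan is to obtain this as the Matlis dual of Lemma~\ref{lem:mod-surj}: injectivity of $\times\ell$ near the bottom of $M$ should correspond to surjectivity of $\times\ell$ near the top of the dual module, and the socle of $M$ should correspond to the minimal generators of the dual. Concretely, I would set $N := M^\vee = \Hom_K(M,K)$, the graded Matlis dual, graded so that $[N]_j = \Hom_K([M]_{-j}, K)$. Because $M$ is Artinian, $N$ is a finitely generated graded $R$-module, so Lemma~\ref{lem:mod-surj} applies to it. Two standard facts drive the argument. First, for each $i$ the multiplication map $\times\ell : [N]_{-i} \to [N]_{-i+1}$ is the $K$-linear transpose of $\times\ell : [M]_{i-1}\to[M]_i$, so over a field one is injective if and only if the other is surjective. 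Second, there is a natural isomorphism $N/\mathfrak{m}N \cong (\soc M)^\vee$, under which a socle element of $M$ in degree $s$ corresponds to a minimal generator of $N$ in degree $-s$.

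With these in hand I would translate the hypotheses. The assumption that every non-trivial socle element of $M$ has degree $\ge d-1$ becomes the statement that every minimal generator of $N$ has degree $\le -(d-1) = -d+1$; thus $N$ satisfies the generator-degree hypothesis of Lemma~\ref{lem:mod-surj} with threshold $D := -d+1$. Dualizing the injectivity of $\times\ell:[M]_{d-1}\to[M]_d$ gives surjectivity of $\times\ell:[N]_{-d}\to[N]_{-d+1}$, which is exactly the hypothesis that $\times\ell:[N]_{D-1}\to[N]_D$ is surjective. Lemma~\ref{lem:mod-surj} then yields that $\times\ell:[N]_{j-1}\to[N]_j$ is surjective for all $j\ge D$.

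To finish, I would dualize back. Surjectivity of $\times\ell:[N]_{j-1}\to[N]_j$ is equivalent to injectivity of $\times\ell:[M]_{-j}\to[M]_{-j+1}$; writing $i = -j+1$, the range $j\ge -d+1$ becomes $i\le d$, and the map in question is $\times\ell:[M]_{i-1}\to[M]_i$. This is precisely the desired conclusion.

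The main obstacle is bookkeeping rather than substance: I must fix the grading and sign conventions for $M^\vee$ consistently, verify that $\times\ell$ is genuinely compatible with transposition in the chosen grading, and check that the socle-to-generator correspondence flips degrees exactly as claimed, so that the lower bound $d-1$ on socle degrees turns into the upper bound that Lemma~\ref{lem:mod-surj} requires. As a sanity check I would also confirm the statement directly: given $0\ne m\in[M]_{i-1}$ with $i\le d$ and $\ell m=0$, let $k$ be maximal with $\mathfrak{m}^k m\ne 0$; if $k\ge d-i$ then some degree-$(d-i)$ multiple $gm\in[M]_{d-1}$ is nonzero and killed by $\ell$, contradicting injectivity in degree $d-1$, while if $k<d-i$ then any nonzero element of $\mathfrak{m}^k m$ is a socle element of degree at most $d-2$, contradicting the hypothesis. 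Either way no such $m$ exists.
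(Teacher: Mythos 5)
Your proposal is correct and follows the same route as the paper: the paper's proof also passes to the $K$-dual $M^{\vee}$, converts the socle-degree hypothesis into a bound on the degrees of the minimal generators of $M^{\vee}$, and invokes Lemma~\ref{lem:mod-surj}. You simply spell out the degree bookkeeping (and add an independent direct verification) that the paper leaves implicit.
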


\begin{proof}
    Recall that the $K$-dual of $M$ is $M^{\vee} = \Hom_K (M, K)$. Then $\times \ell: [M]_{j-1} \rightarrow [M]_{j}$ is injective if
    and only if the map $\times \ell: [M^{\vee}]_{-j} \rightarrow [M^{\vee}]_{-j+1}$ is surjective. The assumption on
    the socle of $M$ means that the degrees of the minimal generators of $M^{\vee}$ are at most $-d+1$. Thus, we
    conclude by Lemma~\ref{lem:mod-surj}.
\end{proof}

The above observations imply that to decide the presence of the weak Lefschetz property we need only check near a
``peak'' of the Hilbert function.

\begin{proposition} \label{pro:wlp}
    Let $A \neq 0$ be an Artinian standard graded $K$-algebra. Let $\ell$ be a general linear form. Then there are the following facts:
    \begin{enumerate}
        \item Let $d$ be the smallest integer such that $h_{A}(d-1) > h_{A}(d)$.
        If $A$ has a non-zero socle element of degree less than $d-1$, then $A$ does not have  the weak Lefschetz property.

        \item Let $d$ be the largest integer such that $h_{A}(d-2) < h_{A}(d-1)$. If $A$ has the weak Lefschetz property, then
        \begin{enumerate}
            \item $\times \ell: [A]_{d-2} \rightarrow [A]_{d-1}$ is injective,
            \item $\times \ell: [A]_{d-1} \rightarrow [A]_{d}$ is surjective, and
            \item $A$ has no socle generators of degree less than $d-1$.
        \end{enumerate}

        \item Let $d \geq 0$ be an integer such that $A$ has the following three properties:
        \begin{enumerate}
            \item $\times \ell: [A]_{d-2} \rightarrow [A]_{d-1}$ is injective,
            \item $\times \ell: [A]_{d-1} \rightarrow [A]_{d}$ is surjective, and
            \item $A$ has no socle generators of degree less than $d-2$.
        \end{enumerate}
        Then $A$ has the weak Lefschetz property.
         \end{enumerate}
\end{proposition}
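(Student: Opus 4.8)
The plan is to reduce every assertion to the elementary fact that a linear map of finite-dimensional vector spaces has maximal rank if and only if it is injective or surjective, and then to read off which of the two alternatives is forced by the shape of the Hilbert function $h_A$ near the relevant degree. Two further inputs will do all the real work: the propagation of surjectivity provided by Lemma~\ref{lem:mod-surj} (applied to the module $A$, whose minimal generators sit in degree $0$) and the dual propagation of injectivity from Corollary~\ref{cor:inj}. I will also use repeatedly that a nonzero element of $\soc A$ of degree $e$ lies in the kernel of $\times \ell$ for \emph{every} linear form $\ell$, so that it obstructs injectivity of $\times \ell\colon [A]_e \to [A]_{e+1}$ no matter how $\ell$ is chosen.

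For part (i) I would first observe that the minimality of $d$ makes $h_A$ non-decreasing through degree $d-1$, so that $h_A(e) \le h_A(e+1)$ whenever $e \le d-2$. A nonzero socle element of degree $e < d-1$ then kills $\times \ell\colon [A]_e \to [A]_{e+1}$, so this map fails to be injective; since its target is at least as large as its source, maximal rank would force injectivity, a contradiction. Because this obstruction is present for every $\ell$, no linear form can be a weak Lefschetz element, so $A$ fails the weak Lefschetz property.

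For part (ii), the maximality of $d$ gives $h_A(d-1) \ge h_A(d)$, while by hypothesis $h_A(d-2) < h_A(d-1)$. Assuming the weak Lefschetz property via $\ell$, the map in (a) has strictly smaller source than target, so maximal rank means injectivity, while the map in (b) has source at least as large as its target, so maximal rank means surjectivity; this yields (a) and (b) at once. For (c), suppose $A$ had a nonzero socle element of degree $e \le d-2$. As in part (i) the map $\times \ell\colon [A]_e \to [A]_{e+1}$ cannot be injective, so maximal rank forces it to be surjective; Lemma~\ref{lem:mod-surj} then propagates surjectivity to all higher degrees, making $h_A$ non-increasing from degree $e$ onward. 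This contradicts $h_A(d-2) < h_A(d-1)$, since $e \le d-2$, proving (c).

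For part (iii) I would split $\ZZ$ into the two ranges $j \ge d$ and $j \le d-1$ and establish maximal rank on each. On $j \ge d$, hypothesis (b) together with Lemma~\ref{lem:mod-surj} makes every map $\times \ell\colon [A]_{j-1} \to [A]_j$ surjective, and a surjection automatically has maximal rank. On $j \le d-1$, I would apply Corollary~\ref{cor:inj}: hypothesis (c) says $\soc A$ lives in degrees $\ge d-2$, which is exactly the input the corollary requires, and together with the injectivity in (a) it yields injectivity of $\times \ell\colon [A]_{j-1} \to [A]_j$ for all $j \le d-1$; an injection automatically has maximal rank. Since the two ranges partition $\ZZ$, the map has maximal rank in every degree, so $A$ has the weak Lefschetz property. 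The only genuinely delicate point, and the step I would double-check most carefully, is the index bookkeeping: aligning the degree shift in Corollary~\ref{cor:inj} so that the socle bound ``degree $\ge d-2$'' matches its hypothesis, and confirming that the ranges $j \ge d$ and $j \le d-1$ meet without leaving a gap at the peak.
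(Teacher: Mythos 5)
Your argument is correct, and for parts (i), (ii)(a)--(b), and (iii) it coincides with the paper's proof: the same reduction of maximal rank to injectivity or surjectivity according to the shape of $h_A$, the same use of Lemma~\ref{lem:mod-surj} to propagate surjectivity upward from degree $d$, and the same application of Corollary~\ref{cor:inj} (with the degree parameter shifted to $d-1$, exactly as you flag) to propagate injectivity downward; your index bookkeeping at the peak is right, since the ranges $j\le d-1$ and $j\ge d$ do cover $\ZZ$. The one place you genuinely diverge is (ii)(c). The paper argues that a socle element of degree $e<d-1$ forces $h_A(e)>h_A(e+1)$, deduces $e\le d-3$ from the defining property of $d$, and then invokes the external fact from \cite{HMNW} that an algebra with the weak Lefschetz property has a unimodal Hilbert function, so the dip at $e$ followed by the rise at $d-1$ is impossible. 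You instead note that maximal rank plus failure of injectivity makes $\times\ell\colon [A]_e\to[A]_{e+1}$ surjective and then run Lemma~\ref{lem:mod-surj} forward to conclude $h_A(d-2)\ge h_A(d-1)$, contradicting the hypothesis directly. Your version is self-contained --- it re-derives exactly the piece of the unimodality statement that is needed from a lemma already proved in the paper --- at the cost of no extra length; the paper's version is shorter on the page only because it outsources that step to the cited reference.
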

\begin{proof}
    Suppose in case (i) $A$ has a socle element $y \neq 0$ of degree $e < d-1$. Then $\ell y = 0$, and so the map
    $\times \ell: [A]_{e} \rightarrow [A]_{e+1}$ is not injective. Moreover, since $e < d-1$ we have $h_{R/I}(e) \le
    h_{R/I}(e+1)$. Hence, the map $\times \ell: [A]_{e} \rightarrow [A]_{e+1}$ does not have maximal rank. This proves
    claim (i).

    For showing (ii), suppose $A$ has the weak Lefschetz property. Then, by its definition, $A$ satisfies (ii)(a) and
    (ii)(b) because $h_A (d-1) \geq h_A (d)$. Assume (ii)(c) is not true, that is, $A$ has a socle element $y \neq 0$ of
    degree $e < d-1$. Then the map $\times \ell: [A]_{e} \rightarrow [A]_{e+1}$ is not injective. Since $A$ has the weak
    Lefschetz property, this implies $h_A (e) > h_A (e+1)$. Hence the assumption on $d$ gives $e \leq d-3$. However, this
    means that the Hilbert function of $A$ is not unimodal. This is impossible if $A$ has the weak Lefschetz property
    (see \cite{HMNW}).

    Finally, we prove (iii). Corollary~\ref{cor:inj} and Assumptions (iii)(a), and (iii)(c) imply that the map
    $\times \ell: [A]_{i-2} \rightarrow [A]_{i-1}$ is injective if $i \leq d$. Furthermore, using (iii)(b) and Lemma~\ref{lem:mod-surj}, we see that $\times \ell: [A]_{i-1} \rightarrow [A]_{i}$ is surjective if $i \geq d$.
    Thus, $A$ has the weak Lefschetz property.
\end{proof}

If the Hilbert function has two peaks in consecutive degrees, a situation to which we refer as ``twin peaks,'' the above arguments  give the following result.

\begin{corollary}\label{cor:twin-peaks-wlp}
    Let $A$ be an Artinian standard graded $K$-algebra, and let $\ell$ be a general linear form. Suppose there is an
    integer $d$ such that $0 \neq h_{A}(d-1) = h_{A}(d)$ and $A$ has no socle elements of degree less than $d-1$. Then
    $A$ has the weak Lefschetz property if and only if $\times \ell: [A]_{d-1} \rightarrow [A]_{d}$ is bijective.
\end{corollary}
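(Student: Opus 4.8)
The plan is to prove the two implications separately, with the forward direction following immediately from the definition of the weak Lefschetz property and the reverse direction being assembled from Corollary~\ref{cor:inj} and Proposition~\ref{pro:wlp}(iii). Throughout I would work with the fixed general linear form $\ell$ and the fixed integer $d$ of the hypothesis, and the key recurring observation is that between two $K$-vector spaces of equal positive dimension a linear map has maximal rank precisely when it is bijective.

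For the forward implication, suppose $A$ has the weak Lefschetz property, witnessed by the general form $\ell$. By definition the map $\times \ell : [A]_{d-1} \rightarrow [A]_{d}$ then has maximal rank. Since $h_A(d-1) = h_A(d) \neq 0$, maximal rank for a map between two spaces of the same finite positive dimension forces it to be both injective and surjective, hence bijective. Note that this direction uses neither the socle hypothesis nor the auxiliary lemmas; it is purely the definition together with the numerical coincidence of the twin peaks.

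For the reverse implication, assume $\times \ell : [A]_{d-1} \rightarrow [A]_{d}$ is bijective, and I would verify the three hypotheses of Proposition~\ref{pro:wlp}(iii) for this same $d$. Hypothesis (iii)(b), surjectivity of $\times \ell : [A]_{d-1} \rightarrow [A]_{d}$, is immediate from bijectivity. Hypothesis (iii)(c), that $A$ has no socle generators of degree less than $d-2$, follows at once from the standing assumption that $A$ has no socle elements of degree less than $d-1$, since $d-2 < d-1$. The only substantive step is hypothesis (iii)(a), injectivity of $\times \ell : [A]_{d-2} \rightarrow [A]_{d-1}$: here I would invoke Corollary~\ref{cor:inj}, whose socle hypothesis—that every non-trivial socle element of $A$ has degree $\geq d-1$—is exactly our standing assumption, and whose injectivity hypothesis is supplied by the bijectivity of $\times \ell : [A]_{d-1} \rightarrow [A]_{d}$. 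The corollary then yields injectivity of $\times \ell : [A]_{j-1} \rightarrow [A]_{j}$ for all $j \leq d$, and taking $j = d-1$ gives (iii)(a). With (iii)(a), (iii)(b), and (iii)(c) established, Proposition~\ref{pro:wlp}(iii) delivers the weak Lefschetz property.

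I do not expect a genuine obstacle in this argument; the whole content is bookkeeping of degrees, and the one thing to get right is that the single socle hypothesis does double duty—feeding both the socle condition of Corollary~\ref{cor:inj} and hypothesis (iii)(c)—while the bijectivity assumption simultaneously provides the injectivity needed to propagate downward through Corollary~\ref{cor:inj} and the surjectivity required by (iii)(b). In particular, unlike in the proof of Proposition~\ref{pro:wlp}(ii), the unimodality of algebras with the weak Lefschetz property is not needed here.
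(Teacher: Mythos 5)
Your proof is correct and takes essentially the route the paper intends: the paper gives no explicit proof of Corollary~\ref{cor:twin-peaks-wlp}, saying only that ``the above arguments'' yield it, and your assembly of Corollary~\ref{cor:inj} (to propagate injectivity downward from the bijective map) with Proposition~\ref{pro:wlp}(iii) is precisely that assembly, with the forward direction being the definitional observation about maps between spaces of equal nonzero dimension. There is no gap.
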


The following easy, but useful observation is essentially the content of \cite[Proposition~2.2]{MMN-2011}.

\begin{proposition} \label{pro:mono}
    Let $A = R/I$ be an Artinian  $K$-algebra, where $I$ is generated by monomials and $K$ is an infinite field.  Let $d$ and $e>0$ be integers. Then the following conditions are equivalent:
    \begin{enumerate}
        \item The multiplication map $\times \ell^e: [A]_{d-e} \to [A]_d$ has maximal rank, where $\ell \in R$ is a general linear form.
        \item The multiplication map $\times (x_1 + \cdots + x_n)^e: [A]_{d-e} \to [A]_d$ has maximal rank.
    \end{enumerate}
\end{proposition}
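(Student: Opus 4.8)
The plan is to prove the equivalence of (i) and (ii) by showing that the general linear form $\ell$ behaves, with respect to the maximal-rank property of multiplication by $\ell^e$, exactly like the specific linear form $x_1 + \cdots + x_n$. The key structural fact to exploit is that $I$ is a monomial ideal, so the torus $(K^*)^n$ acts on $R$ by scaling the variables, and this action fixes $I$. Hence any diagonal change of coordinates $x_i \mapsto c_i x_i$ with $c_i \in K^*$ induces a graded automorphism of $A = R/I$.

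Let me describe the steps. First I would observe that the set of linear forms $\ell = a_1 x_1 + \cdots + a_n x_n$ for which the map $\times \ell^e : [A]_{d-e} \to [A]_d$ has maximal rank is a Zariski-open subset $U$ of the space of linear forms (it is defined by the non-vanishing of at least one maximal minor of the matrix representing the map, whose entries are polynomials in the coefficients $a_i$). The statement that $\ell$ is \emph{general} means precisely that this open set $U$ is nonempty, in which case the maximal-rank property holds for all $\ell$ in some dense open set. So condition (i) is equivalent to saying $U \neq \emptyset$. The goal then reduces to showing that $U \neq \emptyset$ if and only if the particular form $x_1 + \cdots + x_n$ lies in $U$. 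Since $K$ is infinite, a nonempty Zariski-open set is dense, and these topological facts are available.

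Next I would use the torus action to show $U$ is invariant under the scaling that sends a form with all nonzero coefficients to $x_1 + \cdots + x_n$. Concretely, given any $\ell = a_1 x_1 + \cdots + a_n x_n$ with all $a_i \neq 0$, the diagonal automorphism $\varphi$ defined by $x_i \mapsto a_i^{-1} x_i$ maps $\ell$ to $x_1 + \cdots + x_n$ and, being a graded algebra isomorphism of $A$, it carries the multiplication map $\times \ell^e$ to the multiplication map $\times (x_1 + \cdots + x_n)^e$ up to the isomorphisms $\varphi$ on each graded piece. Isomorphisms preserve rank, so $\times \ell^e$ has maximal rank if and only if $\times (x_1 + \cdots + x_n)^e$ does. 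This gives immediately that (ii) implies (i), by choosing $\ell = x_1 + \cdots + x_n$ itself (which clearly has all coefficients nonzero) as a witness that $U$ is nonempty. For the converse, if $U \neq \emptyset$ then, since $U$ is open and dense and the locus where some coefficient vanishes is a proper closed subset, $U$ must contain a form $\ell$ with all $a_i \neq 0$; applying the torus automorphism then forces $x_1 + \cdots + x_n \in U$, which is (ii).

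\textbf{The main obstacle} I anticipate is the bookkeeping that ensures the diagonal automorphism genuinely intertwines the two multiplication maps as maps of vector spaces, so that equality of ranks follows. One must check that $\varphi(\ell^e \cdot m) = (x_1 + \cdots + x_n)^e \cdot \varphi(m)$ for $m \in [A]_{d-e}$, i.e.\ that the square with vertical maps $\varphi$ and horizontal maps the two multiplications commutes; this is where the choice $x_i \mapsto a_i^{-1} x_i$ (rather than $a_i x_i$) is pinned down, and where one uses that $\varphi$ is well-defined on $A$ precisely because $I$ is monomial and hence torus-stable. The rest is routine once the intertwining is set up, but stating it cleanly — and handling the trivial edge cases where a graded piece is zero so that ``maximal rank'' is vacuous — is the part that requires care.
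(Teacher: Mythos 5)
Your proposal is correct and follows essentially the same route as the paper: the paper's proof also observes that a general linear form may be taken to have all coefficients nonzero and then rescales the variables, using that this automorphism of $R$ fixes the monomial ideal $I$, to turn $\ell$ into $x_1 + \cdots + x_n$. Your write-up merely makes explicit the Zariski-openness and the intertwining of the two multiplication maps, which the paper leaves implicit.
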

\begin{proof}
    For the convenience of the reader we recall the argument. Let $\ell = a_1 x_1 + á á á + a_r x_r \in R$ be a general
    linear form. Thus, we may assume that each coefficient $a_i$ is not zero. Rescaling the variables $x_i$ such that
    $\ell$ becomes $x_1 + \cdots + x_n$ provides an automorphism of $R$ that maps $I$ onto $I$.
\end{proof}

Hence, for monomial algebras, it is enough to decide whether the sum of the variables is a Lefschetz element. As a
consequence,  the presence of the weak Lefschetz property in characteristic zero is
equivalent to the presence of the weak Lefschetz property in some (actually, almost every) positive characteristic.

Recall that a \emph{maximal minor} of a matrix $B$ is the determinant of a maximal square sub-matrix of $B$.

\begin{corollary} \label{lem:wlp-0-p}
    Let $A$ be an  Artinian monomial $K$-algebra, where $K$ is infinite.   Then the following conditions are equivalent:
    \begin{enumerate}
        \item $A$ has the weak Lefschetz property in characteristic zero.
        \item $A$ has the weak Lefschetz property in some positive characteristic.
        \item $A$ has the weak Lefschetz property in every sufficiently large positive characteristic.
    \end{enumerate}
\end{corollary}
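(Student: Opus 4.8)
The plan is to reduce the weak Lefschetz property, in every characteristic at once, to the nonvanishing of a fixed finite collection of integer determinants, and then to compare these determinants over $\mathbb{Q}$ and over $\mathbb{F}_p$. The starting observation is that for a monomial algebra $A = R/I$ the relevant data are combinatorial, hence independent of the characteristic: the Hilbert function $h_A(j) = \dim_K[A]_j$ counts the monomials of degree $j$ not in $I$, and the socle of $A$ has a basis consisting of those monomials $m \notin I$ with $x_i m \in I$ for all $i$; in particular the number and degrees of the socle generators do not depend on $K$.

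First I would fix once and for all the linear form $\ell_0 = x_1 + \cdots + x_n$. Since $K$ is infinite and the weak Lefschetz elements form a Zariski-open subset of the space of linear forms, $A$ has the weak Lefschetz property if and only if a general $\ell$ is a weak Lefschetz element, which by Proposition~\ref{pro:mono} holds if and only if $\ell_0$ is one, i.e.\ if and only if every map $M_d\colon [A]_{d-1} \to [A]_d$ given by multiplication by $\ell_0$ has maximal rank. Because $A$ is Artinian there are only finitely many nontrivial such maps. The key point is that, written in the monomial bases of $[A]_{d-1}$ and $[A]_d$, each $M_d$ is a fixed matrix with entries in $\{0,1\}$ (the $(m',m)$ entry equals $1$ precisely when $m' = x_i m$ for some $i$), and this matrix is literally the same in every characteristic.

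Now the maximal rank of an integer matrix over a field depends only on which of its maximal minors vanish in that field, and each such minor is a fixed integer $\mu$. Over a field of characteristic zero, $\mu$ vanishes iff $\mu = 0$ in $\mathbb{Z}$; over a field of characteristic $p$, $\mu$ vanishes iff $p \mid \mu$. This yields all three implications. For (i) $\Rightarrow$ (iii): if $\ell_0$ is a weak Lefschetz element in characteristic zero, then for each of the finitely many degrees $d$ some maximal minor $\mu_d$ of $M_d$ is a nonzero integer, so excluding the finitely many primes dividing the nonzero product $\prod_d \mu_d$, for every larger $p$ each $M_d$ retains maximal rank modulo $p$, and $A$ has the property in every sufficiently large characteristic. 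The implication (iii) $\Rightarrow$ (ii) is immediate. For (ii) $\Rightarrow$ (i): if $A$ has the property in some characteristic $p$, then for each $d$ some maximal minor of $M_d$ is nonzero modulo $p$, hence is a nonzero integer, so every $M_d$ has maximal rank over $\mathbb{Q}$ and $A$ has the property in characteristic zero.

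I expect the only real subtlety to lie in the reduction of the first two paragraphs: one must make sure that passing from a general linear form to the single form $\ell_0$ (via Proposition~\ref{pro:mono}) produces matrices that are genuinely characteristic-free, and that ``$A$ has the weak Lefschetz property'' is correctly identified with the maximal rank of this fixed, finite family of integer matrices. Once that identification is in place, the characteristic comparison is just the standard fact that a nonzero integer is a unit modulo all but finitely many primes, and the finiteness of the family (from $A$ being Artinian) is exactly what upgrades ``some large $p$'' to ``all sufficiently large $p$.'' If one prefers, Proposition~\ref{pro:wlp} can be used to shrink the family to the two maps at the last ascent of the Hilbert function, with the characteristic-independent socle condition handled separately, but this refinement is not needed for the equivalence.
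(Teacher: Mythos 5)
Your proposal is correct and follows essentially the same route as the paper: reduce to the single form $x_1+\cdots+x_n$ via Proposition~\ref{pro:mono}, observe that the finitely many multiplication maps are given by fixed zero-one integer matrices, and compare their maximal minors as integers across characteristics, excluding the finitely many primes dividing the nonzero ones. The extra care you take about the characteristic-independence of the Hilbert function and socle is sound but not needed beyond what the paper already uses.
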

\begin{proof}
    Let $\ell = x_0 + \cdots + x_n$. By Proposition~\ref{pro:mono}, $A$ has the weak Lefschetz property if, for each
    integer $d$, the map $\times \ell: [A]_{d-1} \to [A]_d$ has maximal rank. As $A$ is Artinian, there are only
    finitely many non-zero maps to be checked. Fixing monomial bases for all non-trivial components $[A]_j$, the
    mentioned multiplication maps are described by zero-one matrices.

    Suppose $A$ has the weak Lefschetz property in some characteristic $q \geq 0$. Then for each of the finitely many
    matrices above, there exists a maximal minor that is non-zero in $K$, hence non-zero as an integer. The finitely
    many non-zero maximal minors, considered as integers, have finitely many prime divisors. Hence, there are only
    finitely many prime numbers, which divide one of these minors. If the characteristic of $K$ does not belong to this
    set of prime numbers, then $A$ has the weak Lefschetz property.
\end{proof}

We conclude this subsection by noting that any Artinian ideal in two variables has the weak Lefschetz property. This was
first proven for characteristic zero in~\cite[Proposition~4.4]{HMNW} and then for arbitrary characteristic
in~\cite[Corollary~7]{MZ}, though it was not specifically stated therein (see~\cite[Remark~2.6]{LZ}). We provide a
brief, direct argument here.

\begin{proposition} \label{pro:2-wlp}
    Let $R = K[x,y]$, where $K$ is an infinite field of \emph{arbitrary} characteristic. Then every Artinian graded
    algebra $R/I$ has the weak Lefschetz property.
\end{proposition}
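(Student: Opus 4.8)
The plan is to show that multiplication by a \emph{general} linear form $\ell$ has maximal rank in every degree, so that $A$ has the weak Lefschetz property. The bookkeeping device throughout is the right-exact sequence $[A]_{i-1} \stackrel{\times \ell}{\longrightarrow} [A]_i \rightarrow [\bar{A}]_i \rightarrow 0$, where $\bar{A} = A/\ell A = R/(I + \ell R)$; it yields $\rank(\times \ell \colon [A]_{i-1} \to [A]_i) = h_A(i) - \dim_K [\bar{A}]_i$. Thus maximal rank in degree $i$ is exactly the assertion that $h_A(i) - \dim_K [\bar{A}]_i = \min(h_A(i-1), h_A(i))$, and the whole proof reduces to computing the two Hilbert functions $h_A$ and $h_{\bar{A}}$ and comparing them.

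First I would pin down $\bar{A}$. Since $\ell \neq 0$ is linear, $R/\ell R$ is a polynomial ring $K[z]$ in one variable, and $\bar{A}$ is an Artinian graded quotient of it; hence $\bar{A} \cong K[z]/(z^s)$ for some $s$, so $\dim_K [\bar{A}]_i$ equals $1$ for $i < s$ and $0$ for $i \geq s$. Next I claim $s = \alpha$, where $\alpha = \min\{j \st [I]_j \neq 0\}$ is the initial degree of $I$. Indeed, for $j < \alpha$ we have $[I]_j = 0$, so $[\bar{A}]_j \cong [R/\ell R]_j \neq 0$, giving $s \geq \alpha$; conversely, fixing a nonzero $f \in [I]_\alpha$ and choosing $\ell$ to avoid the finitely many linear factors of $f$ (possible since $K$ is infinite), the image of $f$ is a nonzero element of the one-dimensional space $[K[z]]_\alpha$, whence $[\bar{A}]_\alpha = 0$ and $s \leq \alpha$.

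Then I would determine the shape of $h_A$. For $i < \alpha$ we have $[I]_i = 0$, so $h_A(i) = \dim_K [R]_i = i+1$. For $i \geq \alpha - 1$ I claim $h_A$ is non-increasing, which is the crux and rests on the two-variable growth estimate: for every nonzero $V \subseteq [R]_i$ one has $\dim_K R_1 V \geq \dim_K V + 1$. This I would prove directly: $R_1 V \supseteq xV$ with $\dim_K xV = \dim_K V$, and if equality $R_1 V = xV$ held then $yV \subseteq xV$ would force $x \mid v$ for every $v \in V$ (as $R$ is a UFD), and iterating this gives $V \subseteq x^k [R]_{i-k}$ for all $k$, hence $V = 0$, a contradiction. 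Applying this to $V = [I]_i \neq 0$ for $i \geq \alpha$ yields $\dim_K [I]_{i+1} \geq \dim_K [I]_i + 1$ and hence $h_A(i+1) \leq h_A(i)$; together with the single inequality $h_A(\alpha) = (\alpha+1) - \dim_K [I]_\alpha \leq \alpha = h_A(\alpha - 1)$ at the peak, this shows $h_A$ attains its maximum $\alpha$ at degree $\alpha - 1$ and is non-increasing for $i \geq \alpha - 1$.

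Finally I would assemble these facts with $s = \alpha$. For $1 \leq i \leq \alpha - 1$ the rank formula gives $\rank = h_A(i) - 1 = i = h_A(i-1)$, so $\times \ell$ is injective and of maximal rank; for $i \geq \alpha$ we have $[\bar{A}]_i = 0$, so $\times \ell$ is surjective, and since $h_A$ is non-increasing there, $\rank = h_A(i) = \min(h_A(i-1), h_A(i))$ is again maximal. Hence $\times \ell$ has maximal rank in every degree and $A$ has the weak Lefschetz property. I expect the one genuinely substantive step to be the non-increasing behavior of $h_A$ past its peak, i.e. the two-variable growth estimate; everything else is formal manipulation of the cokernel sequence together with the genericity of $\ell$.
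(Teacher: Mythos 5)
Your proof is correct and follows essentially the same route as the paper's: both rest on the cokernel sequence and the identification $R/(I,\ell)\cong K[z]/(z^{s})$ with $s$ the initial degree of $I$ (you are in fact more careful than the paper in justifying, via the infiniteness of $K$, that a general $\ell$ does not divide a nonzero form of minimal degree in $I$). The one remark worth making is that your ``crux'' --- the growth estimate forcing $h_A$ to be non-increasing past degree $\alpha-1$ --- is superfluous: a surjective linear map automatically has maximal rank (its rank equals the dimension of the target, which a surjection forces to be the minimum of the two dimensions), so surjectivity for $i\ge\alpha$ and injectivity for $i<\alpha$ already finish the proof, exactly as in the paper.
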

\begin{proof}
    Let $\ell \in R$ be a general linear form, and put $s = \min\{j \in \ZZ \st [I]_j \neq 0 \}$. As $[R]_i = [R/I]_i$
    for $i < s$ and multiplication by $\ell$ on $R$ is injective, we see that $[R/I]_{i-1} \rightarrow [R/I]_{i}$ is
    injective if $i < s$. Moreover, since $R/(I,\ell) \cong K[x]/(x^s)$ and $[K[x]/(x^s)]_i = 0$ for $i \geq s$, the map
    $[R/I]_{i-1} \rightarrow [R/I]_{i}$ has a trivial cokernel if $i \geq s$, that is, the map is surjective if $i \geq
    s$. Hence $R/I$ has the weak Lefschetz property.
\end{proof}

Due to this fact, the problem of deciding whether a quotient of $R$ has the weak Lefschetz property is only interesting if $n \ge 3$. In this paper we focus on the case $n=3$, which provides intriguing questions and connections to challenging problems in combinatorics as we are going to show.

\section{Triangular regions} \label{sec:tri}

From now we consider polynomial rings in three variables. For simplicity, we write $R = K[x,y,z]$.
In this section, we begin developing a combinatorial approach for deciding the presence of the  weak Lefschetz property for $R/I$, where $I$ is a monomial ideal. To this end we associate to $I$ a  planar region and consider its tilings by lozenges.

\subsection{Triangular regions and monomial ideals}\label{sub:ideal}~

Let $d \geq 1$ be an integer. Consider an equilateral triangle of side length $d$ that is composed of $\binom{d}{2}$
downward-pointing ($\dntri$) and $\binom{d+1}{2}$ upward-pointing ($\uptri$) equilateral unit triangles. We label the
downward- and upward-pointing unit triangles by the monomials in $[R]_{d-2}$ and $[R]_{d-1}$, respectively, as
follows: place $x^{d-1}$ at the top, $y^{d-1}$ at the bottom-left, and $z^{d-1}$ at the bottom-right, and continue
labeling such that, for each pair of an upward- and a downward-pointing triangle that share an edge, the label of the
upward-pointing triangle is obtained from the label of the downward-pointing triangle by multiplying with a variable.
The resulting labeled triangular region is the \emph{triangular region (of $R$) in degree $d$}
and is denoted $\mathcal{T}_d$. See Figure~\ref{fig:triregion-R}(i) for an illustration.

\begin{figure}[!ht]
    \begin{minipage}[b]{0.48\linewidth}
        \centering
        \includegraphics[scale=1]{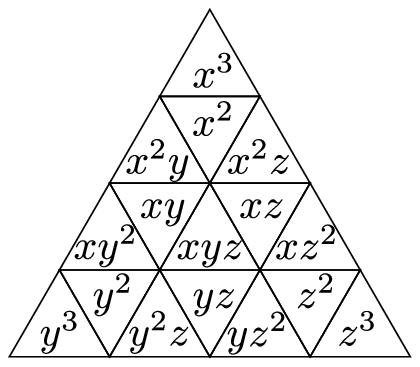}\\
        \emph{(i) $\mathcal{T}_4$}
    \end{minipage}
    \begin{minipage}[b]{0.48\linewidth}
        \centering
        \includegraphics[scale=1]{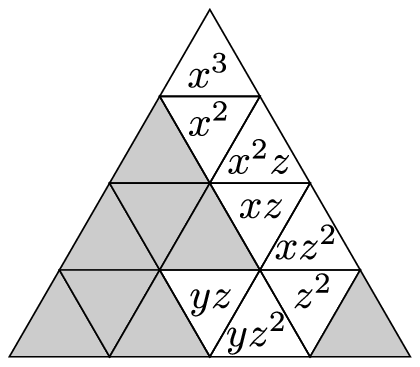}\\
        \emph{(ii) $T_4(xy, y^2, z^3)$}
    \end{minipage}
    \caption{A triangular region with respect to $R$ and with respect to $R/I$.}
    \label{fig:triregion-R}
\end{figure}

Throughout this manuscript we order the monomials of $R$ by using the \emph{graded reverse-lexicographic order}, that is,
$x^a y^b z^c > x^p y^q z^r$ if either $a+b+c > p+q+r$ or $a+b+c = p+q+r$ and the \emph{last} non-zero entry in
$(a-p, b-q, c-r)$ is \emph{negative}. For example, in degree $3$,
\[
    x^3 > x^2y > xy^2 > y^3 > x^2z > xyz > y^2z > xz^2 > yz^2 > z^3.
\]
Thus in $\mathcal{T}_4$, see Figure~\ref{fig:triregion-R}(i), the upward-pointing triangles are ordered starting at
the top and moving down-left in lines parallel to the upper-left edge.

We generalize this construction to quotients by monomial ideals. Let $I$ be any monomial ideal of $R$. The
\emph{triangular region (of $R/I$) in degree $d$}, denoted by $T_d(I)$, is the part of $\mathcal{T}_d$ that is obtained
after removing the triangles labeled by monomials in $I$. Note that the labels of the downward- and
upward-pointing triangles in $T_d(I)$ form $K$-bases of $[R/I]_{d-2}$ and $[R/I]_{d-1}$, respectively. It is sometimes
more convenient to illustrate triangular regions with the removed triangles darkly shaded instead of being removed; both
illustration methods will be used throughout this manuscript. See Figure~\ref{fig:triregion-R}(ii) for an example.

Notice that the regions missing from $\mathcal{T}_d$ in $T_d(I)$ can be viewed as a union of (possibly overlapping)
upward-pointing triangles of various side lengths that include the upward- and downward-pointing triangles inside them.
Each of these upward-pointing triangles corresponds to a minimal generator of $I$ that has, necessarily, degree at most
$d-1$. We can alternatively construct $T_d(I)$ from $\mathcal{T}_d$ by removing, for each minimal generator $x^a y^b
z^c$ of $I$ of degree at most $d-1$, the \emph{puncture associated to $x^a y^b z^c$} which is an upward-pointing
equilateral triangle of side length $d-(a+b+c)$ located $a$ triangles from the bottom, $b$ triangles from the
upper-right edge, and $c$ triangles from the upper-left edge. See Figure~\ref{fig:triregion-punctures} for an example.
We call $d-(a+b+c)$ the \emph{side length of the puncture associated to $x^a y^b z^c$}, regardless of possible overlaps
with other punctures in $T_d (I)$.

\begin{figure}[!ht]
    \begin{minipage}[b]{0.48\linewidth}
        \centering
        \includegraphics[scale=1]{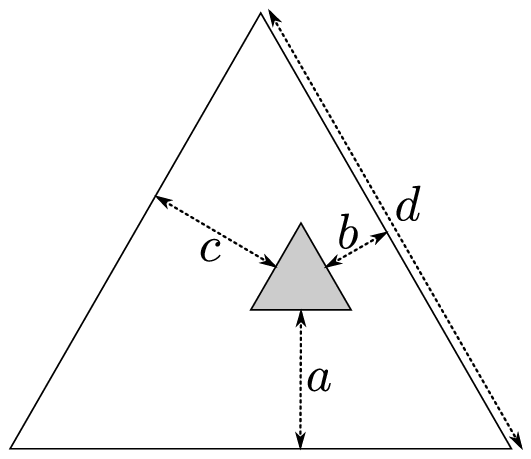}\\
        \emph{(i) $T_{d}(x^a y^b z^c)$}
    \end{minipage}
    \begin{minipage}[b]{0.48\linewidth}
        \centering
        \includegraphics[scale=1]{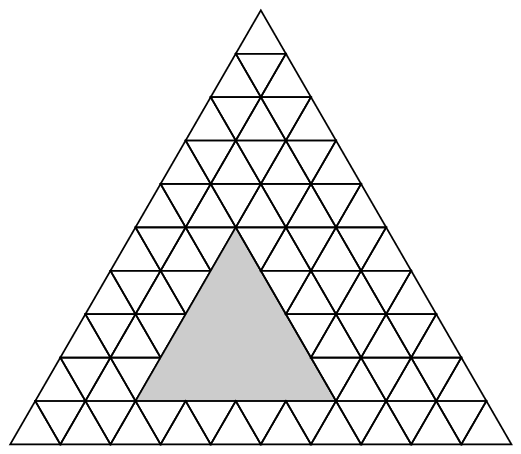}\\
        \emph{(ii) $T_{10}(xy^3z^2)$}
    \end{minipage}
    \caption{$T_d(I)$ as constructed by removing punctures.}
    \label{fig:triregion-punctures}
\end{figure}

We say that two punctures \emph{overlap} if they share at least an edge. Two punctures are said to be \emph{touching}
if they share precisely a vertex.

\subsection{Tilings with lozenges}\label{sub:tiling}~

Now we consider tilings of a triangular region by lozenges. A \emph{lozenge} is a union of two unit equilateral triangles glued together along a shared edge, i.e., a rhombus with
unit side lengths and angles of $60^{\circ}$ and $120^{\circ}$. Lozenges are also called calissons and diamonds in the
literature.

Fix a positive integer $d$ and consider the triangular region $\mathcal{T}_d$ as a union of unit triangles. Thus a \emph{subregion}
$T \subset \mathcal{T}_d$ is a subset of such triangles. We retain their labels. We say that
a subregion $T$ is \emph{$\dntri$-heavy}, \emph{$\uptri$-heavy}, or \emph{balanced} if there are more downward pointing
than upward pointing triangles or less, or if their numbers are the same, respectively. A subregion is \emph{tileable}
if either it is empty or there exists a tiling of the region by lozenges such that every triangle is part of exactly one
lozenge. See Figure~\ref{fig:triregion-intro} for an example.

\begin{figure}[!ht]
    \begin{minipage}[b]{0.48\linewidth}
        \centering
        \includegraphics[scale=1]{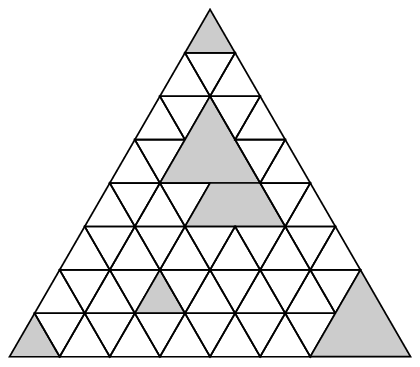}
    \end{minipage}
    \begin{minipage}[b]{0.48\linewidth}
        \centering
        \includegraphics[scale=1]{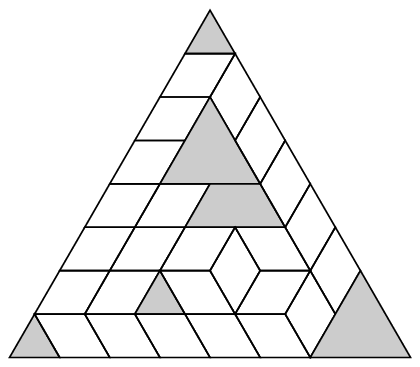}
    \end{minipage}
    \caption{A triangular region $T \subset \mathcal{T}_8$ together with one of its $13$ tilings.}
    \label{fig:triregion-intro}
\end{figure}

Let $T \subset \mathcal{T}_d$ be any subregion. Given a monomial $x^a y^b z^c$ with degree less than $d$, the
\emph{monomial subregion} of $T$ associated to $x^a y^b z^c$ is the part of $T$ contained in the triangle $a$ units from
the bottom edge, $b$ units from the upper-right edge, and $c$ units from the upper-left edge. In other words, this
monomial subregion consists of the triangles that are in $T$ and in the puncture associated to the monomial $x^a y^b z^c$.
See Figure~\ref{fig:triregion-subregion} for an illustration.

\begin{figure}[!ht]
    \includegraphics[scale=1]{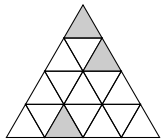}
    \caption{The monomial subregion of $T_{8}(x^7, y^7, z^6, x y^4 z^2, x^3 y z^2, x^4 y z)$
        (see Figure~\ref{fig:triregion-intro}) associated to $x y^2 z$.}
    \label{fig:triregion-subregion}
\end{figure}

Notice that a  tileable subregion is necessarily balanced. In fact, a balanced triangular region $T_d(I)$ is tileable if and only if
    it has no $\dntri$-heavy monomial subregions (see \cite[Theorem 2.2]{CN-resolutions}). The argument uses the following result, which we record for a later application.

\begin{lemma}[{\cite[Lemma 2.1]{CN-resolutions}}] \label{lem:replace-tileable}
    Let $T \subset \mathcal{T}_d$ be any subregion.  If the monomial subregion $U$ of $T$ associated to $x^a y^b z^c$ is tileable,
    then $T$ is tileable if and only if $T \setminus U$ is tileable.

    Moreover, each tiling of $T$ is obtained by combining a tiling of $T \setminus U$ and a tiling of $U$.
\end{lemma}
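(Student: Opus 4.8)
The plan is to reduce everything to a single \emph{no-crossing} claim: in any lozenge tiling $\tau$ of $T$, no lozenge straddles the boundary $\partial P$ of the puncture $P$ associated to $x^a y^b z^c$ (so that $U = T \cap P$). Granting this, each lozenge of $\tau$ lies entirely in $U$ or entirely in $T \setminus U$, which immediately yields the ``moreover'' assertion and shows $T$ tileable $\Rightarrow$ $T \setminus U$ tileable. The reverse implication is the trivial gluing step: since $U$ is tileable by hypothesis, any tiling of $T \setminus U$ combines with a chosen tiling of $U$ to tile $T = U \sqcup (T \setminus U)$. Thus the biconditional and the decomposition statement both follow from no-crossing, and the only real content is establishing that claim. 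I would also dispose of the degenerate cases ($U$ empty or $U = T$) at the outset, where the statement is immediate.

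The first ingredient is a purely geometric observation about the puncture. Because $P$ is an \emph{upward}-pointing triangle, every downward-pointing unit triangle of $P$ has all three of its edges interior to $P$ (its top edge is shared with an $\uptri$ in the row above and its two slanted edges with $\uptri$'s in the same row, all still inside $P$). Consequently every unit triangle of $P$ that meets $\partial P$ is upward-pointing. Since any two unit triangles sharing an edge have opposite orientation, it follows that a lozenge crossing $\partial P$ must consist of an $\uptri$-triangle lying in $U$ glued to a $\dntri$-triangle lying in $T \setminus U$. In particular, \emph{no} $\dntri$-triangle of $U$ can be matched across the boundary: its partner in $\tau$ is forced to be one of its neighbours, all of which lie in $P$, hence in $T \cap P = U$.

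The crux is then a counting argument driven by balance, and this is the step I expect to require the most care. Let $u$ and $v$ denote the numbers of $\uptri$- and $\dntri$-triangles of $U$. Since $U$ is tileable it is balanced, so $u = v$. In the tiling $\tau$, each of the $v$ downward triangles of $U$ is matched, by the previous paragraph, to a \emph{distinct} upward triangle inside $U$; this already consumes $v = u$ of the upward triangles of $U$. Hence no upward triangle of $U$ is left over to serve as the inside triangle of a crossing lozenge, so there are no crossing lozenges at all. With no-crossing established, restricting $\tau$ to $U$ and to $T \setminus U$ produces a tiling of each, completing the argument. The main obstacle is precisely pinning down the orientation claim along all three sides of $P$ cleanly (best done via the ``opposite orientations across an edge'' remark rather than a case analysis of each side) and then invoking tileability of $U$ only through the balance equality $u = v$, which is exactly what forces the crossing count to vanish.
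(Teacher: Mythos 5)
Your proof is correct. The paper itself does not prove this lemma---it imports it verbatim from \cite[Lemma 2.1]{CN-resolutions}---but your argument is the standard one for this statement and is complete: every $\dntri$-triangle of the upward-pointing puncture $P$ has all three neighbours inside $P$, so the $v$ downward triangles of $U=T\cap P$ are matched inside $U$, and the balance $u=v$ forced by tileability of $U$ then leaves no upward triangle of $U$ free to pair across $\partial P$, so no lozenge crosses the boundary and every tiling of $T$ splits as claimed.
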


Let $U \subset \mathcal{T}_d$ be a monomial subregion, and let $T, T' \subset \mathcal{T}_d$ be any subregions such that
$T \setminus U = T' \setminus U$. If $T \cap U$ and $T' \cap U$ are both tileable, then $T$ is tileable if and only if
$T'$ is, by Lemma \ref{lem:replace-tileable}. In other words, replacing a tileable monomial subregion of a triangular
region by a tileable monomial subregion of the same size does not affect tileability.

\section{Enumerations deciding the weak Lefschetz property} \label{sec:enum}

Now we introduce  two enumerations of  \emph{signed} lozenge tilings, using perfect matchings and families of lattice paths, and relate these enumerations to the presence of the weak Lefschetz property.

\subsection{Perfect matchings}\label{sub:pm}

Let $T \subset \mathcal{T}_d$ be any subregion. As above, we consider $T$ as a union of unit triangles. Following \cite{CN-resolutions}, we associate to
$T$ a bipartite graph. This construction  has also been considered in, e.g., \cite{Ci-1997}, \cite{Ci-2005},
and \cite{Ei}.
First, place a vertex at the center of each triangle. Let $B$ be the set of centers of the
downward-pointing triangles, and let $W$ be the set of centers of the upward-pointing triangles.  The \emph{bipartite graph associated to $T$}
is the bipartite graph $G(T)$ on the vertex set $B \cup W$ that has an edge between vertices $B_i \in B$ and $W_j \in W$
if the corresponding upward- and downward-pointing triangle share are edge. In other words, edges of $G(T)$ connect vertices
of adjacent triangles. See Figure~\ref{fig:build-pm}(i).

\begin{figure}[!ht]
    \begin{minipage}[b]{0.32\linewidth}
        \centering
        \includegraphics[scale=1]{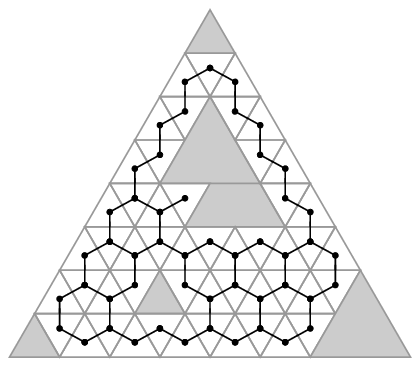}\\
        \emph{(i) The graph $G(T)$.}
    \end{minipage}
    \begin{minipage}[b]{0.32\linewidth}
        \centering
        \includegraphics[scale=1]{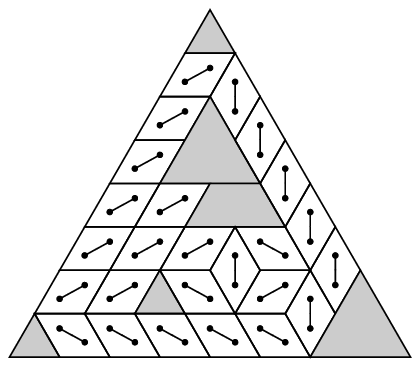}\\
        \emph{(ii) Selected covered edges.}
    \end{minipage}
    \begin{minipage}[b]{0.32\linewidth}
        \centering
        \includegraphics[scale=1]{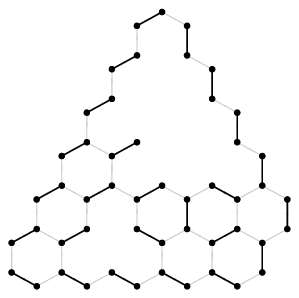}\\
        \emph{(iii) The perfect matching.}
    \end{minipage}
    \caption{Given the tiling $\tau$ in Figure~\ref{fig:triregion-intro}(ii) of $T$, we construct the perfect
        matching $\pi$ of the bipartite graph $G(T)$ associated to $\tau$.}
    \label{fig:build-pm}
\end{figure}

Using the above ordering of the vertices, we define the \emph{bi-adjacency matrix}
of $T$ as the bi-adjacency matrix $Z(T) := Z(G(T))$ of the graph $G(T)$. It is the zero-one matrix $Z(T)$ of
size $\# B \times \# W$ with entries $Z(T)_{(i,j)}$ defined by
\begin{equation*}
    Z(T)_{(i,j)} =
    \begin{cases}
        1 & \text{if $(B_i, W_j)$ is an edge of $G(T)$ } \\
        0 & \text{otherwise.}
    \end{cases}
\end{equation*}
It is a square matrix if and only if the region $T$ is balanced.

A \emph{perfect matching of a graph $G$} is a set of pairwise non-adjacent edges of $G$ such that each vertex is matched.
There is well-known bijection between lozenge
tilings of a balanced subregion $T$ and perfect matchings of $G(T)$. A lozenge tiling $\tau$ is transformed in to a perfect
matching $\pi$ by overlaying the triangular region $T$ on the bipartite graph $G(T)$ and selecting the edges of the graph that
the lozenges of $\tau$ cover. See Figures~\ref{fig:build-pm}.  Using this bijection, it follows that the permanent of the bi-adjacency matrix enumerates the \emph{unsigned} tilings of the region.

\begin{proposition}\label{pro:per}
    If $T \subset \mathcal{T}_d$ is a non-empty balanced subregion, then the lozenge tilings of $T$ are enumerated by $\per{Z(T)}$.
\end{proposition}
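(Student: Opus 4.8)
The plan is to exploit the bijection between lozenge tilings of a balanced subregion $T$ and perfect matchings of the associated bipartite graph $G(T)$, together with the standard combinatorial interpretation of the permanent of a zero-one matrix as a count of perfect matchings. First I would recall that, by the discussion preceding the statement, a lozenge tiling $\tau$ of $T$ corresponds to selecting, for each lozenge, the edge of $G(T)$ joining the centers of the two unit triangles it covers; this assignment $\tau \mapsto \pi$ is a bijection onto the set of perfect matchings of $G(T)$. Indeed, a tiling covers every triangle exactly once, so every vertex of $G(T)$ is incident to exactly one selected edge, giving a perfect matching; conversely, a perfect matching of $G(T)$ pairs each downward-pointing triangle with an edge-adjacent upward-pointing triangle, and laying a lozenge across each such pair reconstructs a tiling. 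Thus it suffices to show that $\per Z(T)$ counts the perfect matchings of $G(T)$.

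Next I would invoke the definition of the permanent of a square matrix. Since $T$ is balanced, $Z(T)$ is a square matrix of size $m := \#B = \#W$, and by definition
\[
    \per Z(T) = \sum_{\sigma \in \PS_m} \prod_{i=1}^{m} Z(T)_{(i,\sigma(i))},
\]
where $\PS_m$ is the symmetric group on $\{1,\ldots,m\}$. Each summand is a product of zero-one entries, hence equals $1$ if and only if $Z(T)_{(i,\sigma(i))} = 1$ for every $i$, and equals $0$ otherwise. By the definition of the bi-adjacency matrix, $Z(T)_{(i,\sigma(i))} = 1$ means precisely that $(B_i, W_{\sigma(i)})$ is an edge of $G(T)$. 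Therefore a permutation $\sigma$ contributes $1$ exactly when the edge set $\{(B_i, W_{\sigma(i)}) : 1 \le i \le m\}$ consists entirely of edges of $G(T)$; since these $m$ edges use each $B_i$ once and each $W_j$ once, such an edge set is exactly a perfect matching of $G(T)$, and distinct permutations $\sigma$ yield distinct perfect matchings. Conversely, any perfect matching, being a bijection between $B$ and $W$ through edges of $G(T)$, determines a unique such $\sigma$.

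Combining these two correspondences, the number of permutations contributing $1$ to $\per Z(T)$ equals the number of perfect matchings of $G(T)$, which in turn equals the number of lozenge tilings of $T$. Since each nonzero summand equals $1$, the permanent is simply this common count, which establishes the claim. I do not expect any genuine obstacle here: the result is essentially a bookkeeping statement chaining two standard bijections, and the only points requiring mild care are the observation that $Z(T)$ is square precisely because $T$ is balanced (so that $\per Z(T)$ is defined in the usual way), and the verification that the tiling-to-matching map is genuinely a bijection rather than merely a surjection. The latter follows because a tiling is recoverable from its matching by reversing the edge-selection rule, so no two distinct tilings produce the same matching.
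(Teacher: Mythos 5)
Your proof is correct and follows exactly the route the paper intends: the paper states this proposition as an immediate consequence of the tiling--matching bijection and the standard interpretation of the permanent of a zero-one bi-adjacency matrix, which is precisely the two-step bookkeeping argument you carry out in detail.
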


Considering a perfect matching $\pi$ as a permutation on $\#\uptri(T) = \#\dntri (T)$ letters, it is natural to assign a
sign to each lozenge tiling using the signature of the permutation $\pi$.

\begin{definition} \label{def:pm-sign}
    Let $T \subset \mathcal{T}_d$ be a non-empty balanced subregion. Then we define the \emph{perfect matching sign}
    of a lozenge tiling $\tau$ of $T$ as $\msgn{\tau} := \sgn{\pi}$, where $\pi \in \PS_{\#\uptri(T)}$ is the perfect
    matching determined by $\tau$.
\end{definition}

Thus  the \emph{perfect matching signed} tilings of a region are enumerated by the determinant of a bi-adjacency matrix.

\begin{theorem}[{\cite[Theorem 3.5]{CN-resolutions}}]\label{thm:pm-det}
     If $T \subset \mathcal{T}_d$ is a non-empty balanced subregion, then the perfect matching signed lozenge tilings of
     $T$ are enumerated by $|\det{Z(T)}|$, that is,
        \[
           \left| \sum_{\tau \text{ tiling of } T}  \msgn{\tau} \right| = |\det Z(T)|.
        \]
\end{theorem}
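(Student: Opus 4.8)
The plan is to relate the permanent and the determinant of the bi-adjacency matrix $Z(T)$ via the perfect matching sign. Recall that for a square zero-one matrix $Z = Z(T)$ of size $N \times N$, where $N = \#\uptri(T) = \#\dntri(T)$, both the permanent and the determinant are sums over the symmetric group $\PS_N$. Specifically,
\[
    \per Z = \sum_{\pi \in \PS_N} \prod_{i=1}^{N} Z_{(i,\pi(i))}
    \quad\text{and}\quad
    \det Z = \sum_{\pi \in \PS_N} \sgn(\pi) \prod_{i=1}^{N} Z_{(i,\pi(i))}.
\]
In each sum, a summand is nonzero precisely when $Z_{(i,\pi(i))} = 1$ for every $i$, i.e., exactly when $\pi$ is a perfect matching of $G(T)$ in the sense that each downward-pointing vertex $B_i$ is paired with an adjacent upward-pointing vertex $W_{\pi(i)}$. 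By the bijection recalled before Proposition~\ref{pro:per}, these permutations $\pi$ are in one-to-one correspondence with the lozenge tilings $\tau$ of $T$, and by Definition~\ref{def:pm-sign} the sign of such a $\pi$ is exactly $\msgn \tau$.

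First I would make this bijection explicit: restricting the sums above to the nonvanishing summands, each surviving term contributes $\prod_i Z_{(i,\pi(i))} = 1$, so
\[
    \det Z(T) = \sum_{\pi \text{ a perfect matching}} \sgn(\pi)
    = \sum_{\tau \text{ tiling of } T} \msgn \tau,
\]
where the middle-to-right equality is precisely the tiling-to-matching correspondence together with the definition of the perfect matching sign. Taking absolute values of both sides yields the claimed identity. This is essentially a bookkeeping argument once the bijection is in hand, and indeed Proposition~\ref{pro:per} is the analogous statement for the permanent (the unsigned count), so the determinant statement is its signed refinement.

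The only genuine subtlety, and the step I expect to require the most care, is verifying that the sign assigned combinatorially in Definition~\ref{def:pm-sign}, namely $\msgn \tau = \sgn \pi$, agrees term-by-term with the algebraic sign $\sgn(\pi)$ appearing in the determinant expansion. This hinges on the chosen orderings of the vertex sets $B$ and $W$: the perfect matching $\pi$ is regarded as an element of $\PS_N$ precisely through the identification of $B_i$ with $W_{\pi(i)}$ under these fixed orderings, and this is the same indexing convention used to write $\det Z(T) = \sum_\pi \sgn(\pi)\prod_i Z_{(i,\pi(i))}$. Since Definition~\ref{def:pm-sign} was formulated using exactly this permutation, the agreement is immediate, but one should confirm that no tiling gives rise to a non-permutation (which cannot happen, since in a tiling every triangle lies in exactly one lozenge, forcing a bijection between $B$ and $W$) and that distinct tilings yield distinct permutations. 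With these points checked, the theorem follows directly from comparing the two standard expansions of the determinant and permanent.
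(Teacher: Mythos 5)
Your proof is correct. Note that the paper does not actually prove this statement itself --- it is imported verbatim from \cite[Theorem 3.5]{CN-resolutions} --- so there is no internal proof to compare against; your argument (expand $\det Z(T)$ over $\PS_N$, observe that the nonvanishing terms are exactly the perfect matchings of $G(T)$, invoke the tiling--matching bijection, and match the determinantal sign with $\msgn$ via Definition~\ref{def:pm-sign}) is the standard one and is complete given the conventions fixed in Section~\ref{sec:enum}. Your closing caveats are the right ones to check, and both hold: a perfect matching of $G(T)$ necessarily covers every triangle exactly once and hence assembles into a tiling, and the permutation $\pi$ is determined by (and determines) the tiling, so the correspondence is a genuine bijection and the sign comparison is term-by-term.
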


We recursively define a puncture of $T \subset \mathcal{T}_d$ to be
a \emph{non-floating} puncture if it  touches the boundary of $ \mathcal{T}_d$ or if it overlaps or touches a non-floating puncture of $T$. Otherwise we call a puncture
a \emph{floating} puncture. For example, the region $T$ in Figure \ref{fig:triregion-intro} has three non-floating punctures (in the corners) and three floating punctures, two of them are overlapping and have side length two.

In the case, where the floating punctures all have even side lengths,  the sign $\msgn{\tau}$ is constant.

\begin{proposition}[{\cite[Corollary 4.7]{CN-resolutions}}]\label{pro:same-sign}
    If $T \subset \mathcal{T}_d$ is a tileable triangular region such that all floating punctures of $T$ have an even side length, then
    every lozenge tiling of $T$ has the same perfect matching sign, and so $\per{Z(T)} = |\det{Z(T)}|$.
\end{proposition}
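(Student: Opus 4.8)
The plan is to deduce the identity $\per Z(T) = |\det Z(T)|$ from the stronger claim that \emph{all} lozenge tilings of $T$ carry the same perfect matching sign. Indeed, Proposition~\ref{pro:per} and Theorem~\ref{thm:pm-det} give $\per Z(T) = \#\{\text{tilings of }T\}$ and $|\det Z(T)| = \big|\sum_{\tau}\msgn\tau\big|$; so if every $\msgn\tau$ equals a common value $\varepsilon\in\{\pm 1\}$, then $\big|\sum_\tau \msgn\tau\big| = \#\{\text{tilings of }T\} = \per Z(T)$. Hence the entire content is to show that $\msgn\tau$ is independent of $\tau$.

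To compare two tilings $\tau,\tau'$ with associated perfect matchings $\pi,\pi'\in\PS_{\#\uptri(T)}$, I would superimpose them. The symmetric difference of $\pi$ and $\pi'$ is a disjoint union of cycles alternating between $\pi$-lozenges and $\pi'$-lozenges; a cycle meeting $m_i$ upward- and $m_i$ downward-pointing triangles is exactly an $m_i$-cycle of the permutation $\pi(\pi')^{-1}$, while triangles covered identically by $\tau$ and $\tau'$ are fixed points. Therefore $\sgn\pi\cdot\sgn\pi' = \sgn\big(\pi(\pi')^{-1}\big) = \prod_i (-1)^{m_i-1}$, so $\msgn\tau = \msgn\tau'$ as soon as every alternating cycle has \emph{odd} half-length $m_i$. (As a sanity check, an elementary hexagon flip is a single $6$-cycle, i.e.\ $m_i = 3$, and indeed preserves the sign.) Thus it suffices to prove that every alternating cycle arising from two tilings of $T$ has odd half-length.

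Fix such a cycle $C$ and let $\Omega$ be the region it encloses. Since $C$ consists of triangles of $T$, it avoids every puncture, so any puncture lying in $\Omega$ is surrounded by triangles of $T$; because each non-floating puncture is joined through a chain of overlapping or touching punctures to the boundary of $\mathcal{T}_d$, only floating punctures can be enclosed by $C$. The triangles of $T$ strictly inside $C$ are tiled identically by $\tau$ and $\tau'$, hence balanced, and the $m$ upward- and $m$ downward-pointing triangles on $C$ are balanced as well; consequently the imbalance $\#\uptri(\Omega) - \#\dntri(\Omega)$ equals the imbalance contributed by the enclosed punctures. The key step I would then carry out is a counting (Euler characteristic / boundary) argument establishing
\[
    m \equiv 1 + \big(\#\uptri(\Omega) - \#\dntri(\Omega)\big) \pmod 2 .
\]
Granting this, $(-1)^{m-1} = (-1)^{\#\uptri(\Omega)-\#\dntri(\Omega)}$. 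A single floating puncture of side length $s$ has imbalance $s$, so once all floating punctures have even side length the enclosed imbalance is even, every $m$ is odd, and the sign is preserved; the conclusion $\per Z(T) = |\det Z(T)|$ then follows from the first paragraph.

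I expect the displayed parity congruence to be the main obstacle, for two reasons. First, relating the length of $C$ to the enclosed imbalance requires translating the cycle in the bipartite graph $G(T)$ into an honest lattice boundary and running an Euler-characteristic (or strip-counting) argument on the band of $2m$ triangles together with the region it bounds. Second, and more delicately, when floating punctures \emph{overlap} or \emph{touch}---which is permitted---the enclosed absent region is a union of upward-pointing triangles rather than a disjoint one, so I must verify that its imbalance is still even; here I would use that the intersection of two equally oriented triangular punctures is again such a triangle, combined with the tileability of $T$, to rule out the offending parities.
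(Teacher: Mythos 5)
The paper offers no internal proof of this proposition: it is imported verbatim as \cite[Corollary~4.7]{CN-resolutions} and used as a black box, so there is nothing in the text to compare your argument against line by line. That said, your reduction is certainly the intended strategy: superimpose the matchings $\pi,\pi'$ of two tilings, decompose the symmetric difference into alternating cycles, observe that a cycle of length $2m_i$ contributes $(-1)^{m_i-1}$ to $\sgn\bigl(\pi(\pi')^{-1}\bigr)$, and note that no cycle of edge-adjacent triangles of $T$ can enclose a non-floating puncture, since the chain of overlapping or touching punctures linking it to the boundary of $\mathcal{T}_d$ is a barrier such a cycle cannot cross. All of this is correct (one small imprecision: the triangles of $T$ strictly inside $C$ need not be tiled \emph{identically} by $\tau$ and $\tau'$ --- they may differ by nested cycles --- but they are matched among themselves by both $\pi$ and $\pi'$, which is all you use).

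The difficulty is that both places where you defer the work are genuine gaps, and together they are the entire content of the statement. First, the congruence $m \equiv 1 + \bigl(\#\uptri(\Omega)-\#\dntri(\Omega)\bigr) \pmod 2$ is asserted, not proved; the Euler-characteristic bookkeeping in the honeycomb dual, with missing vertices and faces created by the enclosed punctures perturbing every count, is precisely the technical heart of the cited source, and verifying it on the elementary hexagon flip is not a substitute. Second, your treatment of overlapping floating punctures points in the wrong direction. By inclusion--exclusion, the union of two upward punctures of side lengths $s_1,s_2$ meeting in an upward triangle of side $s_{12}$ has imbalance $s_1+s_2-s_{12}$, and $s_{12}$ can be odd while $s_1,s_2$ are even: the punctures of $x^5y^3$ and $x^4y^4$ in $\mathcal{T}_{10}$ each have side length $2$ but overlap in a unit triangle, so the union has imbalance $3$. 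Thus the very fact you invoke --- that the intersection of two equally oriented punctures is again such a triangle --- produces an odd enclosed imbalance rather than excluding one, and your parity argument would then predict a sign change. Closing this case needs an additional idea, for instance replacing a cluster of overlapping or touching punctures by its minimal covering puncture as in Corollary~\ref{cor:replace-two-punctures} and controlling the parity of the covering puncture, or showing that no alternating cycle arising from two tilings can enclose such a cluster; neither is supplied. Until both points are settled, what you have is a correct outline of the standard argument, not a proof.
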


This result applies in particular to tileable, simply connected triangular regions.
\smallskip

We now relate the above results about  triangular regions associated to a monomial ideal $I$ to the weak Lefschetz property of $R/I$. The key is an alternative description of the bi-adjacency matrix $Z(T)$ to $T = T_d (I)$  that involves multiplication by $\ell = x+y+z$.

\begin{proposition}\label{pro:interp-Z}
    Let $I$ be a monomial ideal in $R = K[x,y,z]$, and let $\ell = x+y+z$. Fix an integer $d$ and consider the
    multiplication map $\times(x+y+z): [R/I]_{d-2} \rightarrow [R/I]_{d-1}$. Let $M(d)$ be the matrix to this linear map
    with respect to the monomial bases of $[R/I]_{d-2}$ and $[R/I]_{d-1}$ in reverse-lexicographic order. Then the
    transpose of $M(d)$ is the bi-adjacency matrix $Z(T_d (I))$.
\end{proposition}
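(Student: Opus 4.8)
The statement to prove (Proposition~\ref{pro:interp-Z}) identifies the bi-adjacency matrix $Z(T_d(I))$ with the transpose of the matrix $M(d)$ of the multiplication map $\times(x+y+z)\colon [R/I]_{d-2}\to[R/I]_{d-1}$, both taken with respect to reverse-lexicographic monomial bases. The plan is to compare the two matrices entry by entry and show that they agree after transposition. The guiding observation is that both constructions are governed by the same combinatorial adjacency relation: a downward-pointing triangle labeled by a degree-$(d-2)$ monomial is edge-adjacent to an upward-pointing triangle labeled by a degree-$(d-1)$ monomial precisely when the latter is obtained from the former by multiplying by a single variable, which by the labeling convention in Section~\ref{sec:tri} is exactly how the region $\mathcal{T}_d$ is labeled in the first place.

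**Key steps.** First I would fix notation: let $B_1,\dots,B_m$ be the downward-pointing triangles of $T_d(I)$, labeled by the monomials of $[R/I]_{d-2}$ in reverse-lexicographic order, and let $W_1,\dots,W_n$ be the upward-pointing triangles, labeled by the monomials of $[R/I]_{d-1}$ in the same order; recall from Section~\ref{sec:tri} that these labels form $K$-bases of the respective graded pieces. Second, I would compute the entries of $M(d)$: if $B_i$ carries the monomial $u=x^ay^bz^c$ of degree $d-2$, then $\ell\cdot u = xu+yu+zu$, and each term $xu,yu,zu$ is either a basis monomial of $[R/I]_{d-1}$ (if it lies outside $I$) or is zero in $R/I$ (if it lies in $I$). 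Hence the $(j,i)$ entry of $M(d)$ equals $1$ if the monomial $W_j$ equals one of $xu,yu,zu$ and the relevant triangle survives in $T_d(I)$, and $0$ otherwise; note the three products $xu,yu,zu$ are distinct monomials, so no cancellation or coefficient larger than one occurs. Third, I would compute the entries of $Z(T_d(I))$ from Definition in Section~\ref{sub:pm}: the $(i,j)$ entry is $1$ exactly when $B_i$ and $W_j$ share an edge. Fourth, I would invoke the labeling convention directly: an upward-pointing triangle shares an edge with a downward-pointing triangle if and only if its label is the label of the downward triangle times one variable. Therefore the edge $(B_i,W_j)$ exists in $G(T_d(I))$ if and only if the label of $W_j$ is $xu$, $yu$, or $zu$ (and both triangles survive the punctures, i.e. lie outside $I$), which is precisely the condition that $M(d)_{(j,i)}=1$. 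Comparing, $Z(T_d(I))_{(i,j)}=M(d)_{(j,i)}$ for all $i,j$, giving $Z(T_d(I))=M(d)^{\mathsf{T}}$.

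**Main obstacle.** The routine verification of the entrywise correspondence is straightforward once the labeling is unwound; the one point requiring care is the bookkeeping of \emph{which} triangles survive after the punctures are removed and the matching of index orderings. Specifically, I must confirm that the reverse-lexicographic ordering used to index the rows and columns of $M(d)$ agrees with the ordering of the vertex sets $B$ and $W$ used to form the bi-adjacency matrix $Z(T_d(I))$; the text specifies both orderings to be reverse-lexicographic, so the indices line up without an intervening permutation, which is exactly what makes the identification a clean transpose rather than a transpose composed with row and column permutations. The only genuinely substantive point is the geometric fact that edge-adjacency of unit triangles corresponds to multiplication by a single variable, and this is precisely the content of the labeling rule defining $\mathcal{T}_d$, so no separate argument is needed beyond citing that construction.
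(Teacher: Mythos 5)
Your proposal is correct and follows essentially the same route as the paper's proof: both compute the entries of $M(d)$ column by column from $\ell m_j = xm_j + ym_j + zm_j$ modulo $I$, observe that the $(i,j)$ entry of the matrix records whether the degree-$(d-1)$ monomial is a variable multiple of the degree-$(d-2)$ monomial, and identify this with the edge-adjacency condition defining $Z(T_d(I))$ under the shared reverse-lexicographic ordering. Your added remarks on the distinctness of $xu,yu,zu$ and on the index bookkeeping are harmless elaborations of what the paper leaves implicit.
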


\begin{proof}
    Set $s = h_{R/I}(d-2)$ and $t = h_{R/I}(d-1)$, and let $\{m_1, \ldots, m_s\}$ and $\{n_1, \ldots, n_t\}$ be the
    distinct monomials in $[R]_{d-2} \setminus I$ and $[R]_{d-1} \setminus I$, respectively, listed in
    reverse-lexicographic order. Then the matrix $M(d)$ is a $t \times s$ matrix. Its column $j$ is the coordinate
    vector of $\ell m_j = x m_i + y m_i + z m_i$ modulo $I$ with respect to the chosen basis of $[R/I]_{d-1}$. In
    particular, the entry in column $j$ and row $i$ is $1$ if and only if $n_i$ is a multiple of $m_j$.

    Recall  that the rows and columns of $Z(T_d(I))$ are indexed by the downward- and upward-pointing
    unit triangles, respectively. These triangles are labeled by the monomials in $[R]_{d-2} \setminus I$ and
    $[R]_{d-1} \setminus I$, respectively. Since the label of an upward-pointing triangle is a multiple of the label of a
    downward-pointing triangle if and only if the triangles are adjacent, it follows that $Z(T_d(I)) = M(d)^{\rm T}$.
\end{proof}

For ease of reference, we record the following consequence.

\begin{corollary}\label{cor:max-Z}
    Let $I$ be a monomial ideal in $R = K[x,y,z]$.  Then the multiplication map $\times(x+y+z): [R/I]_{d-2} \rightarrow [R/I]_{d-1}$
    has maximal rank if and only if the matrix $Z(T_d(I))$ has maximal rank.
\end{corollary}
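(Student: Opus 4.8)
The plan is to deduce Corollary~\ref{cor:max-Z} directly from Proposition~\ref{pro:interp-Z}, which has just been established. The corollary asserts that the multiplication map $\times(x+y+z) : [R/I]_{d-2} \to [R/I]_{d-1}$ has maximal rank if and only if $Z(T_d(I))$ has maximal rank, so the entire task reduces to a rank-invariance observation about a matrix and its transpose. First I would invoke Proposition~\ref{pro:interp-Z} to write $Z(T_d(I)) = M(d)^{\rm T}$, where $M(d)$ is the matrix of the multiplication map $\times(x+y+z)$ with respect to the monomial bases of $[R/I]_{d-2}$ and $[R/I]_{d-1}$ in reverse-lexicographic order.

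The second step is to recall the two standard linear-algebra facts that carry the argument. A linear map between finite-dimensional $K$-vector spaces has maximal rank (that is, it is injective or surjective) if and only if its matrix $M(d)$ has maximal rank, since the rank of the map equals the rank of any matrix representing it, and maximal rank of a $t \times s$ matrix means rank equal to $\min\{s,t\}$. Then I would use the elementary identity $\rank M(d) = \rank M(d)^{\rm T}$, together with the fact that transposition interchanges the two dimensions of the matrix and hence does not change which value $\min\{s,t\}$ the rank must attain in order to be maximal. Combining these gives that $M(d)$ has maximal rank if and only if $Z(T_d(I)) = M(d)^{\rm T}$ does.

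Chaining these equivalences yields the claim: the map $\times(x+y+z)$ has maximal rank $\iff$ $M(d)$ has maximal rank $\iff$ $M(d)^{\rm T} = Z(T_d(I))$ has maximal rank. I do not anticipate a genuine obstacle here; the content of the corollary is essentially a restatement of Proposition~\ref{pro:interp-Z} once one notes that rank, and therefore the property of having maximal rank, is preserved under transposition. The only point requiring a word of care is that ``maximal rank'' is defined relative to the matrix dimensions, and transposing swaps $s$ and $t$; but since $\min\{s,t\} = \min\{t,s\}$, the threshold for maximality is unchanged, so the equivalence goes through cleanly. The proof will therefore be short, amounting to citing Proposition~\ref{pro:interp-Z} and remarking that a matrix and its transpose share the same rank.
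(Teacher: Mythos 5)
Your proposal is correct and follows exactly the route the paper intends: the corollary is recorded as an immediate consequence of Proposition~\ref{pro:interp-Z}, with the only content being that $Z(T_d(I)) = M(d)^{\rm T}$ and that a matrix and its transpose have the same rank (and the same threshold $\min\{s,t\}$ for maximality). Nothing is missing.
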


Combined with Proposition \ref{pro:mono}, we get a criterion for the presence of the weak Lefschetz property.

\begin{corollary}\label{cor:wlp-biadj}
    Let $I$ be an Artinian monomial ideal in $R = K[x,y,z]$, where $K$ is infinite. Then $R/I$ has the weak Lefschetz property if and only if,
    for each positive integer $d$, the matrix $Z(T_d(I))$ has maximal rank.
\end{corollary}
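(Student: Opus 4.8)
Corollary~\ref{cor:wlp-biadj} is an immediate synthesis of the two tools just established, so the plan is to chain them together while carefully managing the quantifier over the degree $d$.

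The plan is to argue that $R/I$ has the weak Lefschetz property if and only if the multiplication map $\times(x+y+z)\colon [R/I]_{d-2}\to [R/I]_{d-1}$ has maximal rank for every positive integer $d$, and then to translate each such map into a statement about $Z(T_d(I))$. The first equivalence is exactly Proposition~\ref{pro:mono} applied with $e=1$: since $I$ is monomial and $K$ is infinite, a general linear form $\ell$ is a weak Lefschetz element if and only if the specific form $x+y+z$ induces maximal-rank multiplication maps in all degrees. Thus ``$R/I$ has the weak Lefschetz property'' is equivalent to ``for all $d$, the map $\times(x+y+z)\colon [R/I]_{d-2}\to[R/I]_{d-1}$ has maximal rank.'' The second equivalence is Corollary~\ref{cor:max-Z}, which I would quote degree-by-degree: for each fixed $d$, that map has maximal rank precisely when $Z(T_d(I))$ does, because the latter matrix is the transpose of $M(d)$ by Proposition~\ref{pro:interp-Z}, and transposition preserves rank.

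Concretely, I would first record that since $R/I$ is Artinian, only finitely many degrees $d$ give a nonzero source or target, so the universally-quantified condition is genuinely a finite check; this also clarifies that the edge cases where $[R/I]_{d-2}$ or $[R/I]_{d-1}$ vanishes contribute trivially maximal-rank maps (and correspondingly empty or degenerate $Z(T_d(I))$, which vacuously has maximal rank). Then, fixing an arbitrary $d$, I would invoke Corollary~\ref{cor:max-Z} to replace the maximal-rank condition on the multiplication map by the maximal-rank condition on $Z(T_d(I))$. Combining this with the Proposition~\ref{pro:mono} reduction yields the stated biconditional.

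The argument carries essentially no analytic or combinatorial difficulty, since every component is already in hand; the only point requiring a little care is the scope of the quantifier. The mild obstacle is making sure that ``maximal rank in all degrees'' faithfully captures the weak Lefschetz property through the general-versus-specific linear form interchange of Proposition~\ref{pro:mono}, and that the finitely many boundary degrees are handled so that the biconditional holds uniformly rather than only in the ``interior'' range where both Hilbert function values are positive. Once those bookkeeping matters are addressed, the two propositions compose directly and the proof is complete.
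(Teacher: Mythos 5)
Your proposal is correct and follows exactly the route the paper intends: reduce to the specific form $x+y+z$ via Proposition~\ref{pro:mono} and then apply Corollary~\ref{cor:max-Z} degree by degree. The extra care with vacuous degrees is sound but not needed beyond the observation you already make.
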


Assuming large enough socle degrees, it is enough to consider at most two explicit matrices to check for the weak Lefschetz property.

\begin{corollary}\label{cor:wlp-Z}
    Let $I$ be an Artinian monomial ideal in $R = K[x,y,z]$, where $K$ is infinite, and suppose the degrees of the  socle generators of  $R/I$ are at least $d-2$. Then:
    \begin{enumerate}
        \item If $0 \neq h_{R/I}(d-1) = h_{R/I}(d)$, then $R/I$ has the weak Lefschetz property if and only if
            $\det{Z(T_d(I))}$ is not zero in $K$.
            
        \item If $h_{R/I}(d-2) < h_{R/I}(d-1)$ and $h_{R/I}(d-1) > h_{R/I}(d)$, then $R/I$ has the weak Lefschetz
            property if and only if $Z(T_d(I))$ and $Z(T_{d+1}(I))$ both have a maximal minor that is not zero in $K$.
            
    \end{enumerate}
\end{corollary}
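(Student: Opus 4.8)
My plan is to reduce both statements to the map-theoretic criteria of Section~\ref{sec:wlp} and then translate them into the language of bi-adjacency matrices. The bridge will be Corollary~\ref{cor:max-Z}, which identifies maximal rank of the multiplication map $\times(x+y+z)\colon [R/I]_{d-2}\to[R/I]_{d-1}$ with maximal rank of $Z(T_d(I))$ (and, shifting the degree, maximal rank of the next map with that of $Z(T_{d+1}(I))$); combined with Proposition~\ref{pro:mono}, this lets me work throughout with the single linear form $x+y+z$ in place of a general one. The only extra ingredient I need is the elementary linear-algebra dictionary: a matrix has maximal rank precisely when one of its maximal minors is nonzero in $K$, while a \emph{square} matrix has maximal rank precisely when its determinant is nonzero in $K$. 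Under this dictionary each rank statement about a multiplication map becomes a (maximal) minor statement about the corresponding $Z(T_{\bullet}(I))$, and whether the governing invariant is a single determinant or merely some maximal minor is dictated by whether the two Hilbert-function values involved are equal or not.

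First I would settle part (ii), where the Hilbert function has a strict peak in degree $d-1$. For the forward implication, if $R/I$ has the weak Lefschetz property then, by the definition of that property together with Proposition~\ref{pro:mono}, both maps $\times(x+y+z)\colon[R/I]_{d-2}\to[R/I]_{d-1}$ and $\times(x+y+z)\colon[R/I]_{d-1}\to[R/I]_{d}$ have maximal rank; applying Corollary~\ref{cor:max-Z} to each and then the dictionary shows that $Z(T_d(I))$ and $Z(T_{d+1}(I))$ each have a nonzero maximal minor. For the converse I would read the implications backwards: nonzero maximal minors force maximal rank of the two matrices, hence of the two multiplication maps, and here the strict inequalities are what sharpen ``maximal rank'' into usable statements---since $h_{R/I}(d-2)<h_{R/I}(d-1)$ the first map is injective, and since $h_{R/I}(d-1)>h_{R/I}(d)$ the second is surjective. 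These are exactly conditions (iii)(a) and (iii)(b) of Proposition~\ref{pro:wlp}, while the hypothesis that every socle generator has degree at least $d-2$ supplies (iii)(c); Proposition~\ref{pro:wlp}(iii) then delivers the weak Lefschetz property.

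For part (i) the equality of the two relevant Hilbert-function values makes the corresponding triangular region balanced, so its bi-adjacency matrix is square and its \emph{determinant}, rather than merely some maximal minor, is the governing invariant. Here I would appeal directly to Corollary~\ref{cor:twin-peaks-wlp}: the assumption that the socle generators have degree at least $d-2$ is exactly the ``no socle below the peak'' condition demanded there, so the weak Lefschetz property is equivalent to bijectivity of the peak multiplication map, which through Corollary~\ref{cor:max-Z} and the square-matrix half of the dictionary is equivalent to $\det Z(T_d(I))\neq 0$ in $K$. The step I expect to require the most care is precisely this bookkeeping: matching each multiplication map to the correct region $T_{\bullet}(I)$, and checking that the socle-degree bound in the hypothesis lines up verbatim with the socle requirement of the quoted result. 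In the balanced situation of (i) that alignment must be exact, so that a single determinant decides the question; in the rectangular situation of (ii) it is the two strict Hilbert inequalities that do the corresponding work, converting the bare rank conditions into the injectivity and surjectivity needed to invoke Proposition~\ref{pro:wlp}(iii).
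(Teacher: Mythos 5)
Your proof is correct and follows exactly the route of the paper's own (one-line) argument: reduce to $\ell = x+y+z$ via Proposition~\ref{pro:mono}, translate maximal rank of the two multiplication maps around the peak into maximal rank of $Z(T_d(I))$ and $Z(T_{d+1}(I))$ via Corollary~\ref{cor:max-Z}, and then invoke Corollary~\ref{cor:twin-peaks-wlp} for part (i) and Proposition~\ref{pro:wlp}(ii)--(iii) for part (ii). The one bookkeeping point you rightly flag is the degree alignment in (i): since $\det Z(T_d(I))$ governs the map $[R/I]_{d-2}\to[R/I]_{d-1}$, Corollary~\ref{cor:twin-peaks-wlp} must be applied with its index shifted down by one, which is precisely what makes the socle bound ``at least $d-2$'' line up verbatim with its hypothesis.
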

\begin{proof}
    By Proposition~\ref{pro:mono}, it is enough to check whether $\ell = x+y+z$ is a Lefschetz element of $R/I$. Hence,
    the result follows by combining Corollary~\ref{cor:max-Z} and Proposition~\ref{pro:wlp} and
    Corollary~\ref{cor:twin-peaks-wlp}, respectively.
\end{proof}

In the case, where the region $T_d(I)$ is balanced, we interpreted the determinant of $Z(T_d(I))$ as the
enumeration of signed perfect matchings on the bipartite graph $G(T_d (I))$. This is also useful for non-balanced regions.

\begin{remark}
   \label{rem:minor as a region}
 For any  region $T_d(I)$, we
interpret a maximal minor of $Z(T_d(I))$ as enumeration for a region obtained  by removing unit triangles from $T_d(I)$, since the rows and columns of
$Z(T_d(I))$ are indexed by the triangles of $T_d(I)$. More precisely, let $T = T_d(I)$ be a $\dntri$-heavy triangular
region with $k$ more downward-pointing triangles than upward-pointing triangles. Abusing notation slightly, we define a
\emph{maximal minor} of $T$ to be a balanced subregion $U$ of $T$ that is obtained by removing $k$ downward-pointing triangles
from $T$. Similarly, if $T$ is $\uptri$-heavy, then we remove only upward-pointing triangles to get a maximal minor.

Clearly, if $U$ is a maximal minor of $T$, then $\det{Z(U)}$ is indeed a maximal minor of $Z(T)$. Thus, $Z(T)$ has
maximal rank if and only if there is a maximal minor $U$ of $T$ such that $Z(U)$ has maximal rank.
\end{remark}

\begin{example} \label{exa:minors-Z}
    Let $I = (x^4, y^4, z^4, x^2 z^2)$. Then the Hilbert function of $R/I$, evaluated between degrees $0$ and $7$, is
    $(1,3,6,10,11,9,6,2)$, and $R/I$ is level with socle degree $7$. Hence, by Corollary~\ref{cor:wlp-Z}, $R/I$ has the
    weak Lefschetz property if and only if $Z(T_5 (I))$ and $Z(T_6 (I))$ both have a maximal minor of maximal rank.

    \begin{figure}[!ht]
        \begin{minipage}[b]{0.54\linewidth}
            \centering
            \includegraphics[scale=1]{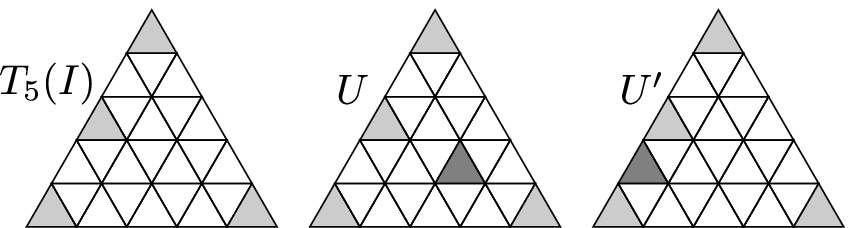}\\
            \emph{(i) $\det{Z(U)} = 0$ and $|\det{Z(U')}| = 4$}
        \end{minipage}
        \begin{minipage}[b]{0.44\linewidth}
            \centering
            \includegraphics[scale=1]{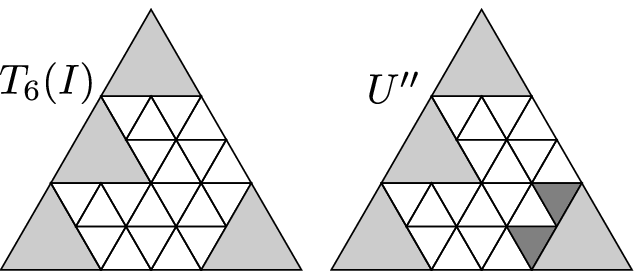}\\
            \emph{(ii) $|\det{Z(U'')}| = 1$}
        \end{minipage}
        \caption{Examples of maximal minors of $T_d(I)$, where $I = (x^4, y^4, z^4, x^2 z^2)$.}
        \label{fig:maximal-minors}
    \end{figure}

    Since $h_{R/I}(3) = 10 < h_{R/I}(4) = 11$, we need to remove $1$ upward-pointing triangle from $T_5(I)$ to get a
    maximal minor of $T_5(I)$; see Figure~\ref{fig:maximal-minors}(i) for a pair of examples. There are $\binom{11}{10} = 11$
    maximal minors, and these have signed enumerations with magnitudes $0, 4,$ and $8$. Thus multiplication from
    degree $3$ to degree $4$ fails injectivity exactly if the characteristic of $K$ is $2$.

    Furthermore, since $h_{R/I}(4) = 11 > h_{R/I}(5) = 9$, we need to remove $2$ downward-pointing triangles from
    $T_6(I)$ to get a maximal minor of $T_6(I)$; see Figure~\ref{fig:maximal-minors}(ii) for an example. There are
    $\binom{11}{9} = 55$ maximal minors, and these have signed enumerations with magnitudes $0, 1,2,3,5,$ and $8$. Thus
    multiplication from degree $4$ to degree $5$ is always surjective (choose the maximal minor whose signed enumeration
    is $1$).

    Hence, we conclude that $R/I$ has the weak Lefschetz property if and only if the characteristic of the base field is
    \emph{not} $2$.
\end{example}

\subsection{Non-intersecting lattice paths}\label{sub:nilp}

Following~\cite[Section~5]{CEKZ} (similarly,~\cite[Section~2]{Fi}) we associate to $T \subset \mathcal{T}_d$ a finite
set $L(T)$ that can be identified with a subset of the lattice $\ZZ^2$.  Abusing notation, we refer to $L(T)$ as a
sub-lattice of $\ZZ^2$. We then employ a relation between lozenge tilings and families of non-intersecting lattice paths (see \cite{CN-resolutions} for details).

To construct $L(T)$, place a vertex at the midpoint of the edge of each triangle of $T$  that is parallel to the upper-left
boundary of the triangle $\mathcal{T}_d$.  We use the monomial label of the upward-pointing triangle on whose   edge the midpoint is located to specify this vertex of $L(T)$.
In order to construct paths on $L(T)$, we think of rightward motion parallel to the
bottom edge of $\mathcal{T}_d$ as ``horizontal'' and downward motion parallel to the upper-right edge of $\mathcal{T}_d$ as
``vertical'' motion. Thus, orthogonalizing $L(T)$ with respect to these motions moves the vertex associated to a monomial $x^a y^b z^{d-1-(a+b)}$ in $L(T)$ to the point $(d-1-b, a)$ in $\mathbb Z^2$ (see Figure~\ref{fig:build-nilp}).

We next single out special vertices of $L(T)$. Label the vertices of $L(T)$ that are only on upward-pointing triangles in $T$, from smallest to largest
in the reverse-lexicographic order, as $A_1, \ldots, A_m$.  Similarly, label the vertices of $L(T)$ that are only on downward-pointing triangles in $T$,
again from smallest to largest in the reverse-lexicographic order, as $E_1, \ldots, E_n$.  See Figure~\ref{fig:build-nilp}(i).  Note that there are an equal
number of vertices  $A_1, \ldots, A_m$ and $E_1, \ldots, E_n$ if and only if the region $T$ is balanced.

A \emph{lattice path} in a lattice $L \subset \ZZ^2$ is a finite sequence of vertices of $L$  so that all single steps move either to the right or down.
Given any vertices $A, E \in \ZZ^2$, the number of lattice paths in $\ZZ^2$ from $A$ to $E$ is a binomial coefficient.  In fact, if $A$ and $E$ have
coordinates $(u,v), (x,y) \in \ZZ^2$, there are $\binom{x-u+v-y}{x-u}$ lattice paths from $A$ to $E$ in $\ZZ^2$.

Using the above identification of $L(T)$ as a sub-lattice of $\ZZ^2$, a \emph{lattice path} in $L(T)$ is a finite sequence of vertices of $L(T)$
so that all single steps move either to the East or to the Southeast. The \emph{lattice path matrix}
of $T$ is the $m \times n$ matrix  $N(T)$ with entries $N(T)_{(i,j)}$ defined by
\[
    N(T)_{(i,j)} = \# \text{lattice paths in $\ZZ^2$ from $A_i$ to $E_j$}.
\]
Thus,  the entries of $N(T)$ are binomial coefficients.

Next we consider several lattice paths simultaneously. A \emph{family of non-intersecting lattice paths}
is a finite collection of lattice paths such that no two lattice paths have any points in common.  If $T$ is balanced, so $m = n$, there is  a well-known bijection between lozenge tilings of $T$ and families of non-intersecting lattice paths from $A_1, \ldots, A_m$ to $E_1, \ldots, E_m$;
see, e.g., the survey~\cite{Pr}.  Let $\tau$ be a lozenge tiling of $T$.  Using the lozenges of $\tau$ as a guide,
we connect each pair of vertices of $L(T)$ that occur on a single lozenge.  This generates the family of non-intersecting lattice
paths $\Lambda$ of $L(T)$ corresponding to $\tau$.  See Figures~\ref{fig:build-nilp}(ii) and~(iii).

\begin{figure}[!ht]
    \begin{minipage}[b]{0.32\linewidth}
        \centering
        \includegraphics[scale=1]{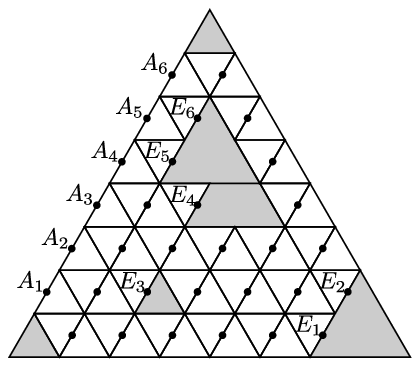}\\
        \emph{(i) The sub-lattice $L(T)$.}
    \end{minipage}
    \begin{minipage}[b]{0.32\linewidth}
        \centering
        \includegraphics[scale=1]{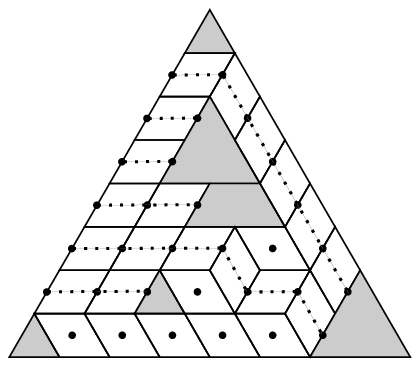}\\
        \emph{(ii) The overlaid image.}
    \end{minipage}
    \begin{minipage}[b]{0.32\linewidth}
        \centering
        \includegraphics[scale=1]{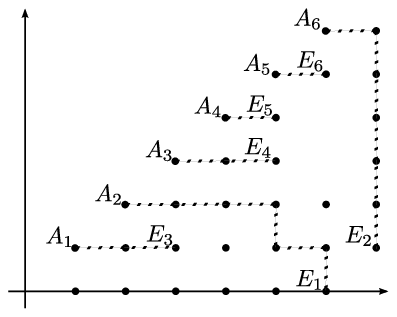}\\
        \emph{(iii) The family $\Lambda$.}
    \end{minipage}
    \caption{The family of non-intersecting lattice paths $\Lambda$ associated to the tiling $\tau$ in Figure~\ref{fig:triregion-intro}(ii).}
    \label{fig:build-nilp}
\end{figure}

Consider now a family $\Lambda$  of $m$ non-intersecting lattice paths in $L(T)$ from
$A_1, \ldots, A_m$ to $E_1, \ldots, E_m$. Then $\Lambda$ determines a permutation $\lambda \in \PS_m$ such that the path in $\Lambda$ that begins at $A_i$ ends at $E_{\lambda(i)}$. Using the signature of $\lambda$ gives another way for assigning a sign to a lozenge tiling of $T$.

\begin{definition} \label{def:nilp-sign}
    Let $T \subset \mathcal{T}_d$ be a non-empty balanced subregion as above, and let $\tau$ be a lozenge tiling of $T$.
    Then we define the \emph{lattice path sign} of $\tau$ as $\lpsgn{\tau} := \sgn{\lambda}$, where $\lambda \in \PS_m$ is
    the permutation such that, for each $i$, the lattice path determined by $\tau$ that starts at $A_i$ ends at $E_{\lambda (i)}$.
\end{definition}

By now we described two signs of a lozenge tiling, one using perfect matchings and one using lattice paths.  In fact, it is shown in \cite{CN-resolutions} that these two signs are essentially the same.
Thus we can enumerate the \emph{signed} tilings of a triangular region $T$ by using its lattice path matrix  $N(T)$ or its bi-adjacency matrix $Z(T)$.

\begin{theorem}[{\cite[Theorems 3.9 and 4.6]{CN-resolutions}}]\label{thm:detZN}
     If $T \subset \mathcal{T}_d$ is a non-empty balanced triangular region, then the perfect matching signed lozenge tilings and the lattice path signed lozenge tilings of
     $T$ are both enumerated by $|\det{Z(T)}| = |\det{N(T)}|$, that is,
        \[
           \left| \sum_{\tau \text{ tiling of } T}  \msgn{\tau} \right| = |\det Z(T)| = |\det{N(T)}| =
           \left| \sum_{\tau \text{ tiling of } T} \lpsgn{\tau} \right|.
        \]
\end{theorem}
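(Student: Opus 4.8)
The plan is to prove the displayed chain of equalities in three stages, treating the two outer equalities as essentially soft and concentrating the real work on the middle one. I first observe that the perfect-matching equality $\left|\sum_\tau \msgn\tau\right| = |\det Z(T)|$ is exactly Theorem~\ref{thm:pm-det}, so I would simply invoke it. (Its content is that the Leibniz expansion $\det Z(T) = \sum_{\pi} \sgn\pi \prod_i Z(T)_{(i,\pi(i))}$ has a nonzero term precisely for each permutation $\pi$ realizing a perfect matching of $G(T)$, i.e.\ for each tiling $\tau$, and that the sign of that term is $\msgn\tau$ by Definition~\ref{def:pm-sign}.)

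Second, for the lattice-path equality $|\det N(T)| = \left|\sum_\tau \lpsgn\tau\right|$, I would apply the Lindstr\"om--Gessel--Viennot lemma to the sub-lattice $L(T)$. Since every step of a lattice path moves East or Southeast, the linear functional ``first coordinate minus second coordinate'' strictly increases along every step, so $L(T)$ is acyclic and the lemma applies to the sources $A_1,\dots,A_m$ and sinks $E_1,\dots,E_m$ with path-counting matrix $N(T)$. It yields $\det N(T) = \sum_\Lambda \sgn\lambda$, the sum running over all families $\Lambda$ of pairwise non-intersecting lattice paths joining the $A_i$ to the $E_j$, with $\lambda$ the induced permutation. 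The bijection between tilings $\tau$ and such families $\Lambda$ recalled just before Definition~\ref{def:nilp-sign}, together with $\lpsgn\tau = \sgn\lambda$, rewrites this as $\det N(T) = \sum_\tau \lpsgn\tau$; taking absolute values gives the claim.

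Third --- and this is where the genuine work lies --- it remains to prove $|\det Z(T)| = |\det N(T)|$. If $T$ is not tileable there are no matchings and no non-intersecting families, so both determinants vanish and the equality is trivial; hence I may fix a tiling $\tau_0$. By the two stages above, the equality is equivalent to $\left|\sum_\tau \msgn\tau\right| = \left|\sum_\tau \lpsgn\tau\right|$, and this would follow at once from a pointwise statement: that there is a constant $\varepsilon \in \{\pm1\}$, independent of $\tau$, with $\lpsgn\tau = \varepsilon\,\msgn\tau$ for every tiling $\tau$ (one then takes $\varepsilon = \msgn{\tau_0}\cdot\lpsgn{\tau_0}$). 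To establish this I would compare the permutation $\pi \in \PS_{\#\uptri(T)}$ defining $\msgn\tau$ with the permutation $\lambda$ defining $\lpsgn\tau$ along local moves on the set of tilings. For an elementary flip of three lozenges around a unit hexagon, the two perfect matchings of the hexagon's bipartite $6$-cycle differ by a $3$-cycle on its triangles, which is even, so $\msgn$ is unchanged; and the corresponding local change of the non-intersecting family preserves the source-to-sink pairing $\lambda$ up to an even permutation, so $\lpsgn$ is unchanged as well. Thus the ratio $\lpsgn\tau/\msgn\tau$ is invariant under every flip.

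The main obstacle is then precisely the connectivity of the tiling space. Flips generate all tilings only when $T$ is simply connected; in that case the above already recovers Proposition~\ref{pro:same-sign}, since both signs are then constant. For regions with floating punctures, however, the flip graph can break into several components, which is exactly the phenomenon responsible for the even/odd side-length dichotomy of Proposition~\ref{pro:same-sign}. To finish I would have to supplement the flips with moves that pass between components --- sliding lozenges around a floating puncture --- and verify that each such move multiplies $\msgn$ and $\lpsgn$ by a common factor, so that $\lpsgn\tau/\msgn\tau$ stays constant across the entire tiling space. Carrying out this sign bookkeeping for the two a priori unrelated conventions, uniformly in the combinatorics of overlapping and touching punctures, is the delicate heart of the argument; everything else is either cited or a direct application of the Lindstr\"om--Gessel--Viennot lemma.
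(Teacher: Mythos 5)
First, a remark on the comparison you were asked for: the paper does not prove this statement at all --- it is imported verbatim from \cite{CN-resolutions} (Theorems 3.9 and 4.6), so there is no internal proof to measure your argument against. Your first two stages are reasonable in outline: the perfect-matching equality is literally Theorem~\ref{thm:pm-det} (itself a citation), and Lindstr\"om--Gessel--Viennot is the standard route to $\det N(T) = \sum_\tau \lpsgn{\tau}$. Even there, however, you should address the fact that the entries of $N(T)$ count lattice paths in $\ZZ^2$, not in $L(T)$: you must argue that a non-intersecting family joining the $A_i$ to the $E_j$ cannot pass through a puncture, or that the offending families cancel in pairs. For regions with holes this is not automatic and is part of what \cite{CN-resolutions} actually establishes.

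The genuine gap is your third stage, which you flag but do not close. You reduce the middle equality to the pointwise claim that $\msgn{\tau}\cdot\lpsgn{\tau}$ is constant over all tilings, and you correctly verify invariance under an elementary flip (a flip composes $\pi$ with a $3$-cycle and leaves the source-to-sink pairing $\lambda$ literally unchanged). But flip-connectivity of the set of tilings holds only for simply connected regions, and that case is already subsumed by Proposition~\ref{pro:same-sign}. The case the theorem is really needed for is a region with floating punctures --- exactly where the flip graph disconnects and where $\msgn$ can genuinely fail to be constant (odd side lengths). Your argument there ends with the statement that you \emph{would} introduce moves sliding lozenges around a puncture and \emph{would} verify the sign bookkeeping: those moves are never defined, it is not shown that they together with flips connect the tiling space, and the computation that each such move multiplies $\msgn$ and $\lpsgn$ by a common factor is precisely the substance of \cite[Theorem~4.6]{CN-resolutions}. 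As written, your proof establishes the theorem only for simply connected $T$ and defers the essential case.
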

\smallskip

We now show that the lattice path matrix $N(T_d(I))$ is also relevant for studying the weak Lefschetz property of $R/I$. If $N(T_d(I))$ is a  square matrix, then this follows from the results in Subsection~\ref{sub:pm} and Theorem \ref{thm:detZN}. However, this matrix is also relevant, even if it is not square. In fact, $N(T_d(I))$ is related to the cokernel of multiplication by $\ell = x+y+z$ on $R/I$.

\begin{proposition} \label{pro:interp-N}
    Let $I$ be a monomial ideal of $R = K[x,y,z]$.   Fix an integer $d$ and set $N = N(T_d(I))$. Then $\dim_K{[R/(I, x+y+z)]_{d-1}} = \dim_K {\ker{N^{\rm T}}}$.
\end{proposition}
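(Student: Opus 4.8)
The plan is to reduce the statement to a purely linear-algebraic identity comparing the ranks of $Z(T_d(I))$ and $N = N(T_d(I))$ over the arbitrary field $K$, and then to establish that identity through the tiling/matching/lattice-path correspondences. Write $\ell = x+y+z$, put $s = h_{R/I}(d-2)$ and $t = h_{R/I}(d-1)$, and let $M = M(d)$ be the $t \times s$ matrix of $\times\ell \colon [R/I]_{d-2} \to [R/I]_{d-1}$, so that $Z(T_d(I)) = M^{\rm T}$ by Proposition~\ref{pro:interp-Z}. The right-exact sequence $[R/I]_{d-2} \stackrel{\times\ell}{\longrightarrow} [R/I]_{d-1} \to [R/(I,\ell)]_{d-1} \to 0$ identifies $[R/(I,\ell)]_{d-1}$ with the cokernel of $\times\ell$, whence $\dim_K [R/(I,\ell)]_{d-1} = t - \rank M = t - \rank Z(T_d(I))$. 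On the other hand, $N$ is an $m \times n$ matrix whose rows and columns are indexed by $A_1, \dots, A_m$ and $E_1, \dots, E_n$, so $N^{\rm T} \colon K^m \to K^n$ has $\dim_K \ker N^{\rm T} = m - \rank N$. Thus the assertion is equivalent to the identity $\rank Z(T_d(I)) - \rank N = t - m$, to be proved over $K$.

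First I would compute $t - m$ combinatorially. Every upward-pointing unit triangle of $T_d(I)$ has a unique edge parallel to the upper-left boundary of $\mathcal{T}_d$, and the midpoint of that edge is exactly the vertex of $L(T_d(I))$ carrying that triangle's label; by construction this vertex is one of $A_1, \dots, A_m$ precisely when the downward-pointing triangle sharing the edge is absent from $T_d(I)$. Hence $m = t - p$, where $p$ counts the upper-left edges shared by an upward- and a downward-pointing triangle of $T_d(I)$; symmetrically $n = s - p$. Therefore $t - m = p = s - n$, and the goal becomes the field-independent rank identity $\rank Z(T_d(I)) = \rank N + p$.

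For the crux I would argue through minors. Over $K$, $\rank Z(T_d(I))$ is the largest $r$ for which $Z(T_d(I))$ has a nonzero $r \times r$ minor. By the region interpretation of minors (Remark~\ref{rem:minor as a region}) together with Proposition~\ref{pro:per} and Theorem~\ref{thm:pm-det}, such a minor equals, up to sign, $\det Z(U)$ for the balanced subregion $U \subseteq T_d(I)$ spanned by the chosen $r$ upward- and $r$ downward-pointing triangles, namely its perfect-matching signed tiling count; by Theorem~\ref{thm:detZN} this count also equals $\pm\det N(U)$, the lattice-path signed tiling count. Dually, a $k \times k$ minor of $N$ selects $k$ of the sources and $k$ of the sinks and, by the Lindstr\"om--Gessel--Viennot lemma, computes the signed number of families of non-intersecting lattice paths joining them. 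The plan is to match these two enumerations under $r = k + p$: a nonzero $k \times k$ minor of $N$ routes $k$ non-intersecting paths among the chosen sources and sinks, which together with the $p$ forced lozenges along the interior upper-left edges assembles a tileable balanced subregion with $k+p$ triangles of each orientation and nonzero signed count, hence a nonzero $(k+p) \times (k+p)$ minor of $Z(T_d(I))$; the converse runs the same correspondence backwards. Conceptually this is the transfer-matrix statement that the source-to-sink path matrix $N$ is the Schur complement of $Z(T_d(I))$ obtained by pivoting on the $p$ forced lozenges — each contributing a $\pm 1$ pivot — so that $Z(T_d(I))$ is equivalent over $\ZZ$ to $\operatorname{diag}(I_p, \pm N)$ and the rank identity persists over every field $K$.

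The main obstacle is precisely this size-$k \leftrightarrow$ size-$(k+p)$ minor correspondence, carried out compatibly with signs and independently of $K$: one must verify that routing the chosen paths past the forced interior structure is a sign-preserving bijection onto the tilings enumerated by the matching minor of $Z(T_d(I))$ — equivalently, that pivoting on the interior upper-left edges genuinely yields a unimodular $I_p$-block with Schur complement $\pm N$. Theorem~\ref{thm:detZN} already delivers this at the top (balanced) size; the remaining work is to propagate it to minors of all sizes while controlling the signs over $\ZZ$, after which the claimed equality of dimensions follows at once.
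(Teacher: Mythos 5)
Your reduction is sound as far as it goes: the cokernel identification $\dim_K[R/(I,x+y+z)]_{d-1} = t - \rank Z(T_d(I))$ and the count $t-m = s-n = p$ of interior upper-left edges are both correct, so the proposition is indeed equivalent to the field-independent identity $\rank_K Z(T_d(I)) = \rank_K N(T_d(I)) + p$. The gap is that this identity --- the entire substance of the statement --- is asserted rather than proved. Theorem~\ref{thm:detZN} cannot deliver it: it applies only to balanced regions, compares only the two \emph{full} determinants, and only as integers in absolute value; when $\charf K$ divides $\det Z$, the equality $|\det Z| = |\det N|$ over $\ZZ$ says nothing about the ranks of $Z$ and $N$ over $K$, and for non-balanced $T$ (the case the proposition is really needed for) even that determinant-level statement is unavailable. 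What your argument requires is the unimodular equivalence of $Z(T_d(I))$ with $\left(\begin{smallmatrix} {\mathcal I}_p & 0 \\ 0 & \pm N\end{smallmatrix}\right)$ over $\ZZ$, i.e.\ that the $p\times p$ block indexed by the interior-edge pairs is unitriangular in a suitable order and that pivoting on it produces $\pm N$ as Schur complement. You correctly flag this as ``the remaining work,'' but it is precisely the hard part: it amounts to reproving the lattice-path/dimer correspondence at the level of matrices rather than determinants, with signs controlled for minors of every size, and none of the quoted results does this.

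The paper's proof takes a different and much shorter route that bypasses $Z$ and the tiling combinatorics entirely. It identifies $[R/(I,x+y+z)]_{d-1}$ with $[S/J]_{d-1}$, where $S=K[x,y]$ and $J$ is the image of $I$ under $z\mapsto x+y$, and writes down an explicit generating set of the vector space $[J]_{d-1}$ whose coordinate matrix has block form with two identity blocks and the single block $N$ in the middle. The point is that the lattice-path counts $\binom{e}{s-p}$ from $A_i$ to $E_j$ are \emph{literally} the coefficients of $x^sy^{d-1-s}$ in $x^py^{d-1-p-e}(x+y)^e$, so $N$ appears directly as a coordinate matrix and the rank identity over an arbitrary field falls out of the block structure. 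If you want to salvage your approach, adopting this algebraic identification of $N$ is far more economical than carrying out the Schur-complement reduction of $Z$.
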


\begin{proof}
Note that $R/(I, x+y+z)$ is isomorphic to $[S/J]_{d-1}$,
where $S = K[x,y]$ and $J$ is the ideal generated by the generators of $I$ with $x+y$ substituted for $z$.

We now describe a matrix whose rank equals $\dim_K [J]_{d-1}$. Define an integer $a$ as the least power of $x$ in
    $I$ that is less than $d$, and set $a := d$ if no such power exist. Similarly, define an integer $b \leq d$ using
    powers of $y$ in $I$. Let $G_1$ and $G_3$ be the sets of monomials in $x^a [S]_{d-1-a}$ and $y^b [S]_{d-1-b}$,
    respectively. Furthermore, let $G_2$ be the set consisting of the polynomials $x^p y^{d-1-p-e} (x+y)^e \in
    [J]_{d-1}$ such that $x^i y^j z^e$ is a minimal generator of $I$, where $e > 0,\; i \leq p$, and $j \leq d-1-p-e$.
    Thus, replacing $x+y$ by $z$, each element of $G_2$ corresponds to a monomial $x^p y^{d-1-p-e} z^e \in [I]_{d-1}$. Order
    the elements of $G_2$ by using the reverse-lexicographic order of the corresponding monomials in $[I]_{d-1}$, from
    smallest to largest. Similarly, order the monomials in $G_1$ and $G_3$ reverse-lexicographically, from smallest to
    largest. Note that $G_1 \cup G_2 \cup G_3$ is a generating set for the vector space $[J]_{d-1}$. The coordinate
    vector of a polynomial in $[S]_{d-1}$ with respect to the monomial basis of $[S]_{d-1}$ has as entries the
    coefficients of the monomials in $[S]_{d-1}$. Order this basis again reverse-lexicographically from smallest to
    largest. Now let $M$ be the matrix whose column vectors are the coordinate vectors of the polynomials in $G_1, G_2$,
    and $G_3$, listed in this order. Then $\dim_K [J]_{d-1} = \rank M$ because $G$ generates $[J]_{d-1}$.

    Finally, consider the lattice path matrix $N = N(T_d(I))$. Its rows and columns are indexed by the starting and
    end points of lattice paths, respectively. Fix a starting point $A_i$ and an end point $E_j$. The monomial label of
    $A_i$ is of the form $x^s y^{d-1-s}$, where $x^s y^{d-1-s} \notin I$. Thus, the orthogonalized coordinates of $A_i
    \in \ZZ^2$ are $(s, s)$. The monomial label of the end point $E_j$ is of the form $x^p y^{d-1-p-e} z^e$, where $x^p
    y^{d-1-p-e} z^e$ is a multiple of a minimal generator of $I$ of the form $x^i y^j z^e$. The orthogonalized coordinates of $E_j$ are $(p+e, p)$. Hence there are
    \[
        \binom{p+e-s + s - p}{s-p} = \binom{e}{s-p}
    \]
    lattice paths in $\ZZ^2$ from $A_i$ to $E_j$. By definition, this is the $(i, j)$-entry of  the lattice path matrix $N$.

    The monomial label of the end point $E_j$ corresponds to the polynomial
    \[
        x^p y^{d-1-p-e} (x+y)^e = \sum_{k=0}^e \binom{e}{k} x^{p+k} y^{d-1-p-k}
    \]
    in $G_2$. Thus, its coefficient of the monomial label $x^s y^{d-1-s}$ is $N_{(i, j)}$. It follows that the matrix
    $M$ has the form
    \begin{equation*}
        M =
        \begin{pmatrix}
            {\mathcal I}_{d - b} & * & 0 \\
            0 & N & 0 \\
            0 & * & {\mathcal I}_{d - a}
        \end{pmatrix},
    \end{equation*}
    where we used ${\mathcal I}_{k}$ to denote the $k \times k$ identity matrix.

    Notice that the matrices $M$ and $N$ have $d = \dim_K [S]_{d-1}$ and $a+b - d$ rows, respectively. We conclude that
    \begin{equation*}
        \begin{split}
            \dim_K [S/J]_{d-1} &= d - \dim_K [J]_{d-1}\\
                               &= d - \rank M \\
                               &= a+b-d - \rank N \\
                               &= \dim_K \ker N^{\rm T},
        \end{split}
    \end{equation*}
    as claimed.
\end{proof}

The last result provides another way for checking whether the multiplication by $x+y+z$ has maximal rank.

\begin{corollary} \label{cor:max-N}
    Let $I$ be a monomial ideal in $R = K[x,y,z]$. Then the multiplication map
    $\varphi_d = \times(x+y+z): [R/I]_{d-2} \rightarrow [R/I]_{d-1}$ has maximal rank if and only if $N = N(T_d(I))$
has maximal rank.
\end{corollary}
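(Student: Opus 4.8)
The plan is to compare the kernel and the cokernel of $\varphi_d$ with those of the two linear maps determined by $N$, and then to read off both maximal-rank conditions from the vanishing of these spaces. Throughout put $s = h_{R/I}(d-2)$ and $t = h_{R/I}(d-1)$, and regard $N = N(T_d(I))$ as an $m \times n$ matrix whose rows are indexed by $A_1, \ldots, A_m$ and whose columns are indexed by $E_1, \ldots, E_n$, so that $\rank \varphi_d = \min(s,t)$ means $\varphi_d$ has maximal rank and $\rank N = \min(m,n)$ means $N$ does.

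First I would identify the cokernel of $\varphi_d$. From the right-exact sequence $[R/I]_{d-2} \stackrel{\times\ell}{\longrightarrow} [R/I]_{d-1} \rightarrow [R/(I,\ell)]_{d-1} \rightarrow 0$ with $\ell = x+y+z$, we get $\operatorname{coker}\varphi_d \cong [R/(I,\ell)]_{d-1}$. Proposition~\ref{pro:interp-N} then yields $\dim_K \operatorname{coker}\varphi_d = \dim_K \ker N^{\rm T}$, which, after translating dimensions into ranks, reads $t - \rank\varphi_d = m - \rank N$.

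The key extra input is a shape-matching identity, $t - s = m - n$, which I would prove by counting edges parallel to the upper-left boundary of $\mathcal{T}_d$. Each upward- and each downward-pointing triangle of $T_d(I)$ has exactly one such edge. Call an upper-left edge \emph{internal} if both triangles adjacent to it lie in $T_d(I)$, and let $p$ be their number. By the very definition of the vertices of $L(T_d(I))$, the upper-left edges of the upward-pointing triangles that are not internal are precisely the vertices $A_1, \ldots, A_m$ lying only on upward-pointing triangles, while those of the downward-pointing triangles that are not internal are precisely the vertices $E_1, \ldots, E_n$ lying only on downward-pointing triangles. Counting upper-left edges once from the $\#\uptri(T_d(I)) = t$ upward triangles gives $t = p + m$, and once from the $\#\dntri(T_d(I)) = s$ downward triangles gives $s = p + n$; hence $t - s = m - n$. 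Subtracting this from the cokernel identity above gives $s - \rank\varphi_d = n - \rank N$, that is, $\dim_K \ker\varphi_d = \dim_K \ker N$.

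Finally I would conclude by a short bookkeeping. A linear map has maximal rank if and only if its kernel or its cokernel vanishes; applied to $N$, this says $N$ has maximal rank exactly when $\ker N = 0$ or $\ker N^{\rm T} = 0$. Since I will have shown $\dim_K \ker\varphi_d = \dim_K \ker N$ and $\dim_K \operatorname{coker}\varphi_d = \dim_K \ker N^{\rm T}$, it follows that $\min(\dim_K \ker\varphi_d,\, \dim_K \operatorname{coker}\varphi_d) = \min(\dim_K \ker N,\, \dim_K \ker N^{\rm T})$, and this common minimum is zero for one map if and only if it is zero for the other. Therefore $\varphi_d$ has maximal rank if and only if $N$ does. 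The main obstacle is the identity $t - s = m - n$: Proposition~\ref{pro:interp-N} alone controls only the cokernel side, which suffices when the relevant matrices are square (the balanced case), and the edge-counting argument is exactly what is needed to also pin down the kernel side and thereby handle the non-square case.
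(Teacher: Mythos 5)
Your proposal is correct and follows essentially the same route as the paper: the same right-exact sequence together with Proposition~\ref{pro:interp-N} to identify $\dim_K\operatorname{coker}\varphi_d$ with $\dim_K\ker N^{\rm T}$, and the same count of edges parallel to the upper-left boundary (your internal-edge count $p$ is exactly the paper's count of lattice vertices lying on both an upward- and a downward-pointing triangle) to obtain $h_{R/I}(d-1)-h_{R/I}(d-2)=m-n$. The only difference is cosmetic bookkeeping at the end: you phrase maximal rank via vanishing of kernel or cokernel, while the paper phrases it as $\dim_K\ker N^{\rm T}=\max\{0,m-n\}$.
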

\begin{proof}
    Consider the exact sequence
    \[
        [R/I]_{d-2} \stackrel{\varphi_d}{\longrightarrow}
        [R/I]_{d-1} \longrightarrow [R/(I, x+y+z)]_{d-1} \longrightarrow 0.
    \]
    It gives that $\varphi_d$ has maximal rank if and only if $ \max\{0, \dim_K{[R/I]_{d-1}} - \dim_K{[R/I]_{d-2}}\} = \dim_K [R/(I, x+y+z)]_{d-1}$.
    By Proposition~\ref{pro:interp-N}, this is equivalent to
    \[
        \dim_K {\ker{N^{\rm T}}} = \max\{0, \dim_K{[R/I]_{d-1}} - \dim_K{[R/I]_{d-2}}\}.
    \]

    Recall that, by construction, the vertices of the lattice $L(T_d(I))$ are on edges of the triangles that are
    parallel to the upper-left edge of ${\mathcal T}_d$, where this edge belongs to just an upward-pointing triangle
    ($A$-vertices), just a downward-pointing triangle ($E$-vertices), or an upward- and a downward-pointing unit
    triangle (all other vertices). Suppose there are $m$ $A$-vertices, $n$ $E$-vertices, and $t$ other vertices. Then
    there are $m + t$ upward-pointing triangles and $n+t$ downward-pointing triangles, that is, $\dim_K{[R/I]_{d-1}} =
    m+t$ and $\dim_K{[R/I]_{d-2}} = n+t$. Hence
    \[
        \dim_K{[R/I]_{d-1}} - \dim_K{[R/I]_{d-2}} = (m+t) - (n+t) = m-n.
    \]
    Since the rows and columns of $N$ are indexed by $A$- and $E$-vertices, respectively, $N$ is an $m \times n$ matrix.
    Hence, $N$ has maximal rank if and only if
    \[
        \dim_K {\ker{N^{\rm T}}} = \max\{0, m-n\} = \max\{0, \dim_K{[R/I]_{d-1}} - \dim_K{[R/I]_{d-2}}\}.
    \]
\end{proof}

Now, using Corollary~\ref{cor:max-N}
instead of Corollary~\ref{cor:max-Z}, we obtain a result that is analogous to Corollary~\ref{cor:wlp-Z}.

\begin{corollary}\label{cor:wlp-N}
    Let $I$ be an Artinian monomial ideal in $R = K[x,y,z]$, where $K$ is infinite, and suppose the degrees of the socle generators of $R/I$
    are at least $d-2$. Then:
    \begin{enumerate}
        \item If $0 \neq h_{R/I}(d-1) = h_{R/I}(d)$, then $R/I$ has the weak Lefschetz property if and only if
            $\det{N(T_d(I))}$ is not zero in $K$.
            
        \item If $h_{R/I}(d-2) < h_{R/I}(d-1)$ and $h_{R/I}(d-1) > h_{R/I}(d)$, then $R/I$ has the weak Lefschetz
            property if and only if $N(T_d(I))$ and $N(T_{d+1}(I))$ both have a maximal minor that is not zero in $K$.
            
    \end{enumerate}
\end{corollary}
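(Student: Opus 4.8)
The plan is to follow the proof of Corollary~\ref{cor:wlp-Z} line for line, with Corollary~\ref{cor:max-N} in the role of Corollary~\ref{cor:max-Z}. Because $I$ is monomial and $K$ is infinite, Proposition~\ref{pro:mono} reduces the weak Lefschetz property to the single linear form $\ell = x+y+z$: the algebra $R/I$ has the property if and only if every map $\times(x+y+z)\colon [R/I]_{j-1}\to[R/I]_j$ has maximal rank. Corollary~\ref{cor:max-N} then re-expresses each of these rank conditions as a maximal-rank condition on the corresponding lattice path matrix. This is the only new input; everything else is the degree bookkeeping already recorded in Proposition~\ref{pro:wlp} and Corollary~\ref{cor:twin-peaks-wlp}.

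For part (i), the hypothesis $0\neq h_{R/I}(d-1)=h_{R/I}(d)$ places us in the twin-peaks situation of Corollary~\ref{cor:twin-peaks-wlp}. I would first verify that the socle-degree assumption supplies the hypothesis of that corollary, namely that $R/I$ has no non-zero socle element below the first peak; granting this, the weak Lefschetz property is equivalent to bijectivity of the multiplication map between the two peaks. By Corollary~\ref{cor:max-N} this map is bijective precisely when its lattice path matrix has maximal rank, and since the two peaks have equal dimension that matrix is square, so maximal rank over $K$ is exactly the non-vanishing of its determinant. This is the criterion asserted in~(i).

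For part (ii), the hypotheses $h_{R/I}(d-2)<h_{R/I}(d-1)$ and $h_{R/I}(d-1)>h_{R/I}(d)$ exhibit a strict peak at degree $d-1$, so Proposition~\ref{pro:wlp}(ii)--(iii) applies. Together with the socle-degree hypothesis, which supplies the ``no socle generators below degree $d-2$'' clause of~(iii), it shows that $R/I$ has the weak Lefschetz property if and only if $\times\ell\colon[R/I]_{d-2}\to[R/I]_{d-1}$ is injective and $\times\ell\colon[R/I]_{d-1}\to[R/I]_d$ is surjective. Invoking Corollary~\ref{cor:max-N} for each of these two non-square maps turns them into maximal-rank conditions on $N(T_d(I))$ and $N(T_{d+1}(I))$, respectively, and over a field maximal rank is equivalent to the existence of a non-vanishing maximal minor; this yields the two minor conditions in~(ii).

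Since every ingredient is already in place, I do not anticipate a genuine obstacle: the proof is an assembly of Proposition~\ref{pro:mono}, Corollary~\ref{cor:max-N}, and the peak criteria. The one point deserving attention is the degree--socle accounting, namely checking that the socle-degree hypothesis is strong enough to license Corollary~\ref{cor:twin-peaks-wlp} in case~(i) and clause~(iii) of Proposition~\ref{pro:wlp} in case~(ii); this, together with the elementary remark that over a field maximal rank is detected by a non-zero maximal minor (respectively a non-zero determinant in the square case), is all that the argument needs beyond the cited results.
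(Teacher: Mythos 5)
Your proposal matches the paper's own proof: the paper obtains Corollary~\ref{cor:wlp-N} exactly as Corollary~\ref{cor:wlp-Z}, substituting Corollary~\ref{cor:max-N} for Corollary~\ref{cor:max-Z} and then assembling Proposition~\ref{pro:mono} with Proposition~\ref{pro:wlp} and Corollary~\ref{cor:twin-peaks-wlp}. The degree--socle accounting you flag is treated in the same (implicit) way there, so nothing further is needed.
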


In the case where $T = T_d(I)$ is balanced we interpreted the determinant of $N(T)$ as the  enumeration of signed
families of non-intersecting lattice paths in the lattice $L(T)$ (see Theorem \ref{thm:detZN}). In general, we can
similarly interpret the maximal minors of $N(T)$ by removing $A$-vertices or $E$-vertices from $L(T)$, since the rows
and columns of $N(T)$ are indexed by these vertices. Note that removing the $A$- and $E$-vertices is the same as
removing the associated unit triangles in $T$. For example, $U'$ in Figure~\ref{fig:maximal-minors}(i) corresponds to
removing the starting point $A_1$ from $U$. It follows that the maximal minors of $N(T)$ are exactly the determinants of
maximal minors of $T$ (see Remark \ref{rem:minor as a region}) that are obtained from $T$ by removing only unit triangles corresponding to $A$- and $E$-vertices.
We call such a maximal minor a \emph{restricted maximal minor} of $T$.

Clearly, $N(T)$ has maximal rank if and only if there is a restricted maximal minor $U$ of $T$ such that $N(U)$ has
maximal rank. As a consequence, for a $\uptri$-heavy region $T$, it is enough to check the restricted maximal minors in
order to determine whether $Z(T)$ has maximal rank.

\begin{proposition} \label{pro:restricted-only}
    Let $T = T_d(I)$ be an $\uptri$-heavy triangular region. Then $Z(T)$ has maximal rank if and only if there is a
    restricted maximal minor $U$ of $T$ such that $Z(U)$ has maximal rank.
\end{proposition}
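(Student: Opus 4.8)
The plan is to avoid working with the wide matrix $Z(T)$ directly and instead route everything through the lattice path matrix $N(T)$, whose rows are indexed precisely by the $A$-vertices. First I would record the basic equivalence already available to us: by Corollary~\ref{cor:max-Z} and Corollary~\ref{cor:max-N}, each of $Z(T)$ and $N(T)$ has maximal rank (over $K$) if and only if the multiplication map $\times(x+y+z)\colon [R/I]_{d-2}\to[R/I]_{d-1}$ does. In particular, $Z(T)$ has maximal rank if and only if $N(T)$ does. Since $T$ is $\uptri$-heavy, we have $m>n$ in the notation of the proof of Corollary~\ref{cor:max-N}, so $N(T)$ is an $m\times n$ matrix with more rows than columns; here ``maximal rank'' means full column rank $n$, which holds over $K$ exactly when some selection of $n$ of the $m$ rows produces an invertible $n\times n$ submatrix.

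Next I would identify such an invertible submatrix with a restricted maximal minor. Discarding $m-n$ rows of $N(T)$ means discarding $m-n$ $A$-vertices, i.e.\ deleting the $m-n$ upward-pointing triangles attached to them; the resulting region $U$ is balanced, because deleting $m-n$ of the $m+t$ upward-pointing triangles leaves $n+t=\#\dntri(U)$ of them. This is exactly a restricted maximal minor in the sense defined just before the statement, and the retained submatrix is $N(U)$. So, invoking the observation recorded there, $N(T)$ has maximal rank if and only if some restricted maximal minor $U$ of $T$ satisfies that $N(U)$ is invertible. It then remains to pass from $N(U)$ back to $Z(U)$: since $U$ is balanced, Theorem~\ref{thm:detZN} gives $|\det Z(U)|=|\det N(U)|$ as an equality of non-negative integers, whence $\det Z(U)=\pm\det N(U)$ in $\ZZ$. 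The one point I would be careful about is that maximal rank is taken over a possibly positive-characteristic field $K$; reducing this integer identity modulo $\charf K$ shows $\det Z(U)$ is nonzero in $K$ precisely when $\det N(U)$ is, so $Z(U)$ has maximal rank if and only if $N(U)$ does. Chaining the three biconditionals yields both directions of the claim at once.

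The main obstacle is conceptual rather than computational. A priori, knowing only that $Z(T)$ has maximal rank produces some \emph{unrestricted} maximal minor of $T$, where one might be forced to delete upward-pointing triangles that do not sit at $A$-vertices, and it is not at all clear that the deletions can be confined to $A$-vertices. The entire point of passing to $N(T)$ is that its rows are indexed by the $A$-vertices alone, so \emph{every} full-rank square submatrix of $N(T)$ automatically corresponds to deleting only $A$-vertex triangles. I would therefore make the reduction $Z(T)\to N(T)\to Z(U)$ the backbone of the argument, using Corollaries~\ref{cor:max-Z} and~\ref{cor:max-N} for the first passage and Theorem~\ref{thm:detZN} (with the characteristic-sensitive transfer above) for the last.
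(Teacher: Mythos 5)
Your proof is correct and follows essentially the same route as the paper: pass from $Z(T)$ to $N(T)$ via Corollaries~\ref{cor:max-Z} and~\ref{cor:max-N}, identify full-rank square submatrices of $N(T)$ with restricted maximal minors, and return to $Z(U)$ via Theorem~\ref{thm:detZN}. The only detail the paper makes explicit that you elide is that each restricted maximal minor $U$, being obtained by deleting upward-pointing triangles (equivalently, adding degree $d-1$ monomial generators), is itself the triangular region of a monomial ideal, which is what licenses the application of Theorem~\ref{thm:detZN}.
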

\begin{proof}
    By Corollaries~\ref{cor:max-Z} and~\ref{cor:max-N}, we have that $Z(T)$ has maximal rank if and only if $N(T)$ has
    maximal rank. Since each restricted maximal minor $U$ of $T$ is obtained by removing upward-pointing triangles, it
    is the triangular region of some monomial ideal. Thus, Theorem~\ref{thm:detZN} gives $|\det{Z(U)}| = |\det{N(U)}|$.
\end{proof}

\begin{remark} \label{rem:restricted}
    The preceding proposition allows us to reduce the number of minors of $Z(T)$ that need to be considered.
    In Example~\ref{exa:minors-Z}(i), there are $11$ maximal minors of $T_5(I)$, but only $2$ \emph{restricted}
    maximal minors.

    In the special case of a hexagonal region as in Proposition~\ref{pro:ci-enum} below, Proposition~\ref{pro:restricted-only}
    was observed by Li and Zanello in \cite[Theorem~3.2]{LZ}.
\end{remark}

We continue to consider Example~\ref{exa:minors-Z}, using lattice path matrices now.

\begin{example} \label{exa:minors-N}
    Recall the ideal $I = (x^4, y^4, z^4, x^2 z^2)$ from Example~\ref{exa:minors-Z}. By Corollary~\ref{cor:wlp-N}, $R/I$
    has the weak Lefschetz property if and only if $N(T_5(I))$ and $N(T_6(I))$ have maximal rank. Since $N(T_5(I))$ is a
    $2 \times 1$ matrix, we need to remove $1$ $A$-vertex to get a maximal minor (see $U'$ in
    Figure~\ref{fig:maximal-minors}(i) for one of the two choices). Both choices have signed enumeration $4$. Since
    $N(T_6(I))$ is a $0 \times 2$ matrix we need to remove $2$ $E$-vertices to get a restricted maximal minor. The
    region $U''$ in Figure~\ref{fig:maximal-minors}(ii) is the only choice, and the signed enumeration is $1$. Thus, we
    see again that $R/I$ has the weak Lefschetz property if and only if the characteristic of the base field $K$ is not  $2$.
\end{example}~

\section{Explicit enumerations}
    \label{sec:enumeration}

Before applying the methods developed in the previous sections to studying the weak Lefschetz property, we consider the problem of determining  enumerations. We begin by discussing some general techniques. We then use these to evaluate some determinants.

\subsection{Replacements}\label{sub:det-replace}~\par

Recall that, by Lemma~\ref{lem:replace-tileable}, removing a tileable region does not affect unsigned tileability. Using
the structure of the bi-adjacency matrix $Z(T)$, we analyze how removing a balanced region affects signed enumerations.

\begin{proposition} \label{pro:rep-enum}
    Let $T \subset \mathcal{T}_d$   be a balanced  subregion, and let $U$ be a balanced monomial subregion of $T$.
    Then $|\det{Z(T)}| = |\det{Z(T \setminus U)} \cdot \det{Z(U)}|$.
\end{proposition}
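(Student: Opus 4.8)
The plan is to reveal a block-triangular structure of the bi-adjacency matrix $Z(T)$ once its rows and columns are reordered so that the triangles of $U$ come first. Recall that the rows of $Z(T)$ are indexed by the downward-pointing ($\dntri$) triangles of $T$ and the columns by the upward-pointing ($\uptri$) triangles of $T$. I would order both indexings so that the triangles lying in $U$ precede those lying in $T \setminus U$. Since $T$ and $U$ are both balanced, $T \setminus U$ is balanced as well, so each of the three regions contributes a \emph{square} block; in particular $Z(U)$ and $Z(T \setminus U)$ will appear as the two diagonal blocks, the top-left block recording exactly the edges of $G(U)$ and the bottom-right block exactly the edges of $G(T \setminus U)$.

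The key step is a geometric observation about the puncture $P$, an upward-pointing equilateral triangle, for which $U$ is the monomial subregion $T \cap P$. I claim that every boundary edge of $P$ is an edge of an $\uptri$-triangle of $P$, while each $\dntri$-triangle of $P$ meets the boundary $\partial P$ only in vertices. Indeed, in every horizontal row of $P$ the leftmost, rightmost, and all bottom-row triangles are upward-pointing, so $\partial P$ consists entirely of edges of $\uptri$-triangles. Consequently, in the graph $G(T)$ any edge crossing $\partial P$ joins an $\uptri$-triangle inside $U$ to a $\dntri$-triangle outside $U$; equivalently, every neighbor of a $\dntri$-triangle of $U$ already lies in $U$. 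This forces the block of $Z(T)$ indexed by the $\dntri$-triangles of $U$ and the $\uptri$-triangles of $T \setminus U$ to vanish.

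With this vanishing block, the reordered matrix takes the block lower-triangular form
\[
    Z(T) = \begin{pmatrix} Z(U) & 0 \\ * & Z(T \setminus U) \end{pmatrix},
\]
where the lower-left block records the boundary-crossing edges. Since the two diagonal blocks are square, we obtain $\det Z(T) = \pm \det Z(U) \cdot \det Z(T \setminus U)$, the sign accounting for the row and column permutations used to achieve the ordering. Passing to absolute values eliminates this sign and yields $|\det Z(T)| = |\det Z(T \setminus U) \cdot \det Z(U)|$, as claimed.

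I expect the main obstacle to be a clean verification of the boundary claim for $P$ — that no edge of a $\dntri$-triangle of $U$ leaves $U$ — since once this is in hand, the block form, the squareness coming from balancedness, and the determinant factorization are all formal. The degenerate cases in which $U$ or $T \setminus U$ is empty are handled by the convention that the determinant of the empty matrix equals $1$.
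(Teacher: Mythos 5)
Your proof is correct and follows essentially the same route as the paper: reorder rows and columns to exhibit $Z(T)$ as a block-triangular matrix with diagonal blocks $Z(U)$ and $Z(T\setminus U)$, observe that the off-diagonal block indexed by the downward-pointing triangles of $U$ and the upward-pointing triangles of $T\setminus U$ vanishes, and apply the block determinant formula. The only difference is that you supply a geometric justification (via the boundary of the puncture) for the vanishing of that block, a fact the paper simply asserts; your justification is valid, since every neighbor of a downward-pointing triangle of $U$ indeed lies in the puncture defining $U$.
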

\begin{proof}
    Recall that the rows of the matrices $Z(\cdot)$ are indexed by the downward-pointing triangles, and the columns of
    the matrices $Z(\cdot)$ are indexed by the upward-pointing triangles, using the reverse-lexicographic order of their
    monomial labels. Reorder the downward-pointing (respectively, upward-pointing) triangles of $T$ so that the triangles of
    $T \setminus U$ come first and the triangles of $U$ come second, where we preserve the internal order of the triangles
    of $T \setminus U$ and $U$. Using this new ordering, we reorder the rows and columns of $Z(T)$. The result is a block
    matrix
    \[
        \left( \begin{array}{cc} Z(T \setminus U) & X \\ Y & Z(U) \end{array} \right).
    \]
    Since the downward-pointing triangles of $U$ are not adjacent  to any upward-pointing triangle of $T \setminus U$, the
    matrix $Y$ is a zero matrix. Thus,  the claims follow by using the block matrix formula for determinants.
\end{proof}

In particular, if we remove a monomial region with a unique lozenge tiling, then we do not modify the enumerations of
lozenge tilings in that region. This is true in greater generality.

\begin{proposition}\label{pro:remove-unique-tileable}
    Let $T \subset \mathcal{T}_d$ be a balanced subregion, and let $U$ be any subregion of $T$ such that each lozenge
    tiling of $T$ induces a tiling of $U$ and all the induced tilings of $U$ agree. Then we have:
    \begin{enumerate}
        \item $Z(T)$ has maximal rank if and only if $Z(T \setminus U)$ has maximal rank.
        \item $|\det{Z(T)}| = |\det{Z(T \setminus U)}|$.
    \end{enumerate}
\end{proposition}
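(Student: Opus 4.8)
The plan is to work with the perfect--matching interpretation of the determinant. Since the entries of $Z(T)$ lie in $\{0,1\}$, the Leibniz expansion collapses to
\[
  \det Z(T) = \sum_{\pi} \sgn \pi,
\]
where $\pi$ ranges over the perfect matchings of $G(T)$ and, by Definition~\ref{def:pm-sign}, each summand equals $\msgn\tau$ for the tiling $\tau$ corresponding to $\pi$ (this is Theorem~\ref{thm:pm-det} before absolute values are taken). I would treat the relevant case in which $T$ is tileable; then by hypothesis every tiling of $T$ induces one and the same tiling $\sigma$ of $U$, so in particular $U$ is tileable and hence balanced, and $T\setminus U$ is balanced as well. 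Consequently $Z(T)$ and $Z(T\setminus U)$ are both square matrices.

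Next I would reduce (i) to (ii). As both regions are balanced, having maximal rank means having nonzero determinant, so it suffices to prove the stronger statement $\det Z(T) = \pm\,\det Z(T\setminus U)$ as an identity of integers. Since the matrices have integer entries, this identity holds over $\ZZ$ and hence in $K$, so $\det Z(T)$ vanishes in $K$ precisely when $\det Z(T\setminus U)$ does; this yields (i) over every field and gives (ii) upon taking absolute values.

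For the core identity I would reorder the rows (downward triangles) and the columns (upward triangles) of $Z(T)$ so that the triangles of $T\setminus U$ come first and those of $U$ come last, preserving the internal reverse-lexicographic order within each block. This multiplies the determinant by a fixed sign $\epsilon=\pm1$ and puts the matrix into the block form
\[
  \begin{pmatrix} Z(T\setminus U) & X \\ Y & Z(U) \end{pmatrix}.
\]
Unlike in Proposition~\ref{pro:rep-enum}, the off-diagonal blocks $X,Y$ need not vanish, because $U$ is not assumed to be a monomial subregion. The decisive point is combinatorial rather than structural: by hypothesis every perfect matching of $G(T)$ matches each triangle of $U$ within $U$, and in fact does so according to $\sigma$. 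Hence any permutation using a nonzero entry of $X$ or $Y$ would match a triangle of $U$ to a triangle outside $U$, so it is not a perfect matching and contributes $0$. Therefore only block-diagonal permutations of the form $\sigma\oplus\tau'$ survive in the expansion, where $\tau'$ is a perfect matching of $G(T\setminus U)$; for these $\sgn(\sigma\oplus\tau')=\sgn\sigma\cdot\sgn\tau'$ with $\sgn\sigma$ a constant independent of $\tau'$. Since combining $\sigma$ with the matchings $\tau'$ of $T\setminus U$ produces exactly the matchings of $T$ (because $U$ and $T\setminus U$ partition $T$ into unit triangles), summing gives $\det Z(T) = \epsilon\,\sgn\sigma\,\det Z(T\setminus U)$, as required.

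The main obstacle is exactly this sign bookkeeping in the absence of a block-triangular structure. In Proposition~\ref{pro:rep-enum} the vanishing of $Y$ was guaranteed geometrically, so the block determinant formula applied verbatim; here $Y$ may be nonzero, and I must instead invoke the stronger tiling-theoretic hypothesis to show that the off-diagonal blocks never enter a surviving term. Care is also needed to check that $\sigma$ is well defined and that $\tau\mapsto\tau'$ is a genuine bijection between the tilings of $T$ and those of $T\setminus U$, so that no matching is lost or double-counted; both follow from the assumption that all induced tilings of $U$ agree, together with the partition of $T$ into the unit triangles of $U$ and of $T\setminus U$.
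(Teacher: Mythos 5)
Your proof is correct and takes essentially the same route as the paper, whose entire proof is a one-line citation of Theorem~\ref{thm:pm-det}: both arguments identify $\det Z(T)$ with the signed enumeration of tilings and use the hypothesis that every tiling of $T$ restricts to the same tiling $\sigma$ of $U$, so tilings of $T$ and of $T \setminus U$ correspond bijectively with signs differing by the constant factor $\sgn\sigma$ (times a fixed reordering sign). Your block-matrix bookkeeping simply makes explicit the sign correspondence that the paper leaves implicit.
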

\begin{proof}
    Part (ii) follows from Theorem \ref{thm:pm-det}, and it implies part (i).
\end{proof}

We point out the following special case.

\begin{corollary}\label{cor:replace-two-punctures}
    Let $T = T_d (I)$ be a balanced triangular region with two punctures $P_1$ and $P_2$ that overlap or touch each
    other. Let $P$ be the minimal covering region of $P_1$ and $P_2$. Then the following statements hold.
    \begin{enumerate}
        \item $\per{Z(T)} = \per{Z(T \setminus P)}$; and
        \item $|\det{Z(T)}| = |\det{Z(T \setminus P)} |$.
    \end{enumerate}
\end{corollary}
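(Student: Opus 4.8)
The plan is to reduce both equalities to a single geometric fact: the part of $T$ lying inside the covering triangle $P$ carries a unique lozenge tiling. Write $U := T \cap P$ for this part. Since $P$ is by definition the minimal upward-pointing triangle containing both punctures, $U$ is exactly the monomial subregion of $T$ associated to the monomial labelling $P$, and it equals $P \setminus (P_1 \cup P_2)$, an upward triangle from which the two overlapping-or-touching upward-triangle holes $P_1, P_2$ have been removed. Note also that $T \setminus P = T \setminus U$, so both parts of the statement concern the effect of deleting the single monomial subregion $U$.

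The key claim is that $U$ is balanced and admits exactly one lozenge tiling. Minimality of $P$ forces each of the three sides of $P$ to be met by $P_1$ or by $P_2$; together with the hypothesis that $P_1$ and $P_2$ overlap or touch, this leaves no slack in $U$. Concretely, I would argue that the tiling of $U$ is forced by a cascade: a downward unit triangle of $U$ that is hemmed in on two of its three sides by the removed punctures can be covered in only one way, and iterating this observation --- each forced lozenge in turn hems in the next downward triangle --- pins down every lozenge; the same bookkeeping shows the up- and down-counts of $U$ agree, so that $U$ is balanced. I expect this propagation-of-forced-lozenges argument, carried out uniformly over the possible relative positions of $P_1$ and $P_2$ (up to the threefold symmetry of $P$), to be the main obstacle, as it is the only step requiring genuine analysis of the geometry rather than the linear-algebraic machinery already developed.

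Granting that $U$ has a unique tiling, both conclusions follow from results already in place. Because $U$ is a tileable monomial subregion, Lemma~\ref{lem:replace-tileable} shows that every tiling of $T$ restricts to a tiling of $U$ and is the combination of that restriction with a tiling of $T \setminus U$; as $U$ has only one tiling, this yields a bijection between the tilings of $T$ and those of $T \setminus U = T \setminus P$ (both sets being empty when $T$ is not tileable, so the case analysis is harmless). Proposition~\ref{pro:per} then gives $\per Z(T) = \per Z(T \setminus P)$, which is part (i). For part (ii), the two facts just noted --- each tiling of $T$ induces a tiling of $U$, and all these induced tilings agree, being the unique one --- are precisely the hypotheses of Proposition~\ref{pro:remove-unique-tileable}, whose part (ii) gives $|\det Z(T)| = |\det Z(T \setminus U)| = |\det Z(T \setminus P)|$. (Alternatively, uniqueness of the tiling of $U$ forces $|\det Z(U)| = 1$, so part (ii) also drops out of Proposition~\ref{pro:rep-enum}.)
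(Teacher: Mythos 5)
Your proof is correct and takes essentially the same route as the paper's: the paper also sets $U = P \setminus (P_1 \cup P_2)$, asserts that this monomial subregion is uniquely tileable, observes $T \setminus U = T \setminus P$, and concludes by Proposition~\ref{pro:remove-unique-tileable}. You merely add detail the paper omits --- the forcing argument for the unique tiling of $U$ (which indeed reduces to two uniquely tileable parallelogram strips) and the explicit treatment of the permanent via Lemma~\ref{lem:replace-tileable} and Proposition~\ref{pro:per} --- so there is nothing to correct.
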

\begin{proof}
    The monomial region $U := P \setminus (P_1 \cup P_2)$ is uniquely tileable. Hence the claims follows from
    Proposition~\ref{pro:remove-unique-tileable} because $T \setminus U = T \setminus P$.
\end{proof}

We give an example of such a replacement.

\begin{example} \label{exa:split-puncture}
    Let $T = T_d(I)$ be a balanced triangular region. Suppose the ideal $I$ has minimal generators $x^{a + \alpha} y^b
    z^c$ and $x^a y^{b + \beta} z^{c+\gamma}$. The punctures associated to these generators overlap or touch if and only if $a + \alpha
    + b + \beta + c + \gamma \leq d$. In this case, the minimal overlapping region $U$ of the two punctures is associated to
    the greatest common divisor $x^a y^b z^c$. Assume that $U$ is not overlapped by any other puncture of $T$. Then $U$ is
    uniquely tileable. Hence the regions $T$ and $T' = T \setminus U = T_d (I, x^a y^b z^c)$ have the same enumerations.
    Note that the ideal $(I, x^a y^b z^c)$ has fewer minimal generators than $I$.
\end{example}~

The above procedure allows us in some cases to pass from a triangular region to a triangular region with fewer punctures.
Enumerations are typically more amenable to explicit evaluations if we have few punctures, as we will see in the next
subsection.

\subsection{Determinants}\label{sub:type1det}~

MacMahon~\cite{MacMahon} computed the number of plane partitions (finite two-dimensional arrays that weakly decrease in all columns and rows)
in an $a \times b \times c$ box as (see, e.g., \cite[Page 261]{Pr})
\[
    \Mac(a,b,c) := \frac{\HF(a) \HF(b) \HF(c) \HF(a+b+c)}{\HF(a+b) \HF(a+c) \HF(b+c)},
\]
where $a$, $b$, and $c$ are nonnegative integers and $\HF(n) := \prod_{i=0}^{n-1}i!$ is the \emph{hyperfactorial} of
$n$. David and Tomei proved in~\cite{DT} that plane partitions in an $a \times b \times c$ box are in bijection with
lozenge tilings in a hexagon with side lengths $(a,b,c)$, that is, a hexagon whose opposite sides are parallel and have
lengths $a, b$, and $c$, respectively. However, Propp states on~\cite[Page 258]{Pr} that Klarner was likely the first to
have observed this. See Figure~\ref{fig:pp-tile} for an illustration of the connection.

\begin{figure}[!ht]
    \begin{minipage}[b]{0.48\linewidth}
        \[
            \vspace{1em}
            \begin{array}{cccccc}
               3 & 3 & 2 & 2 & 2 & 1 \\
               3 & 2 & 2 & 1 & 0 & 0
            \end{array}
        \]
    \end{minipage}
    \begin{minipage}[b]{0.48\linewidth}
        \centering
        \includegraphics[scale=1]{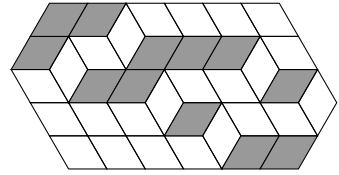}
    \end{minipage}
    \caption{An example of a $2 \times 6 \times 3$ plane partition and the associated lozenge tiling of a hexagon.
        The grey lozenges are the tops of the boxes.}
    \label{fig:pp-tile}
\end{figure}

We use the above formula in several explicit determinantal evaluations. As we are also interested in the prime divisors of
the various non-trivial enumerations we consider, we note that $\Mac(a,b,c) > 0$ and the prime divisors of $\Mac(a,b,c)$
are at most $a+b+c-1$ if $a,b$, and $c$ are positive. This bound is sharp if $a+b+c-1$ is a prime number. If one of $a,
b$, or $c$ is zero, then $\Mac(a,b,c) = 1$.

As a first example, we enumerate the (signed) lozenge tilings of a hexagon.

\begin{proposition} \label{pro:ci-enum}
    Let $a$, $b$, and $c$ be positive integers such that $a \leq b+c$,\; $b \leq a+c$, and $c \leq a+b$. Suppose that
    $d = \frac{1}{2}(a+b+c)$ is an integer. Then $T = T_d(x^a, y^b, z^c)$ is a hexagon with side lengths $(d-a, d-b, d-c)$ and
    \[
        |\det{Z(T)}| = \per{Z(T)} = \Mac(d-a, d-b, d-c).
    \]

    Moreover, the prime divisors of the enumeration are  bounded above by $d-1$.
\end{proposition}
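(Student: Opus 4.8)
The plan is to prove the two assertions separately, since the geometric identification of $T$ as a hexagon, the equality $|\det{Z(T)}| = \per{Z(T)} = \Mac(d-a,d-b,d-c)$, and the bound on the prime divisors each rest on different machinery already developed in the excerpt.

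\emph{Step 1: Identify the region as a hexagon.}
First I would describe the geometry explicitly. The region $T_d(x^a, y^b, z^c)$ is obtained from $\mathcal{T}_d$ by removing three punctures, one at each corner: the puncture associated to $x^a$ has side length $d-a$ and sits at the top, the one for $y^b$ has side length $d-b$ at the bottom-left, and the one for $z^c$ has side length $d-c$ at the bottom-right. The conditions $a \le b+c$, $b \le a+c$, $c \le a+b$ guarantee that these three corner punctures do not overlap (equivalently, that the remaining side lengths $d-a, d-b, d-c$ satisfy the triangle-type inequalities needed for a genuine hexagon rather than a degenerate region), while $d = \tfrac12(a+b+c)$ being an integer ensures the resulting region is balanced. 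A short computation of the six boundary edge lengths of $\mathcal{T}_d$ with the three corner triangles cut off yields precisely a hexagon with opposite sides parallel and side lengths $(d-a, d-b, d-c)$, matching the David--Tomei/Klarner picture recalled just before the proposition.

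\emph{Step 2: Evaluate the enumerations.}
Since all three punctures touch the boundary of $\mathcal{T}_d$, they are non-floating; hence $T$ has \emph{no} floating punctures at all, so Proposition~\ref{pro:same-sign} applies and gives $\per{Z(T)} = |\det{Z(T)}|$. It remains to identify this common value with $\Mac(d-a,d-b,d-c)$. By Proposition~\ref{pro:per}, $\per{Z(T)}$ counts the lozenge tilings of $T$; via the David--Tomei bijection these tilings correspond to plane partitions in a $(d-a)\times(d-b)\times(d-c)$ box, which MacMahon's formula enumerates as $\Mac(d-a,d-b,d-c)$. Chaining these three facts yields the displayed equalities.

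\emph{Step 3: Bound the prime divisors.}
Finally, for the prime divisors I would invoke the observation recorded immediately after the statement of MacMahon's formula: for positive arguments the prime divisors of $\Mac(p,q,r)$ are at most $p+q+r-1$. Here $p+q+r = (d-a)+(d-b)+(d-c) = 3d-(a+b+c) = 3d - 2d = d$, so the prime divisors are bounded by $d-1$, as claimed. (One should note the edge case where some $d-a$, $d-b$, or $d-c$ vanishes, i.e.\ the hexagon degenerates; then $\Mac = 1$ has no prime divisors and the bound holds vacuously, so the positivity assumption on the arguments causes no difficulty.)

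The genuinely delicate point is \emph{Step 1}, the geometric verification that the three corner punctures are disjoint and leave exactly a hexagon: one must check carefully that the inequalities $a\le b+c$ etc.\ are precisely what prevents the punctures from overlapping and that the integrality of $d$ forces balance. Once the region is correctly identified as a hexagon with the stated side lengths, Steps 2 and 3 are immediate applications of Proposition~\ref{pro:same-sign}, Proposition~\ref{pro:per}, the David--Tomei bijection, and the stated properties of $\Mac$.
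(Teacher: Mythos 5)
Your proposal is correct and follows essentially the same route as the paper: verify that the triangle inequalities make the three corner punctures nonnegative and non-overlapping so that $T$ is a hexagon with side lengths $(d-a,d-b,d-c)$, apply MacMahon's formula via the plane-partition bijection to get the permanent, use Proposition~\ref{pro:same-sign} to equate it with $|\det Z(T)|$, and bound the prime divisors by $(d-a)+(d-b)+(d-c)-1=d-1$. The only cosmetic difference is that you justify the sign-coherence via the absence of floating punctures while the paper cites simple-connectedness, but these invoke the same proposition.
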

\begin{proof}
    As $a \leq b + c$, we have $d = \frac{1}{2}(a+b+c) \geq \frac{1}{2}(a + a) = a$. Similarly, $d \geq b$ and $d \geq
    c$. Thus $T$ has three punctures of side length $d-a$, $d-b$, and $d-c$ in the three corners. Moreover, $d -(d-a + d-b) =
    d-c$ is the distance between the punctures of length $d-a$ and $d-b$, and similarly for the other two puncture
    pairings.
    Thus, the unit triangles of $T$ form a hexagon with side lengths $(d-a,d-b,d-c)$. By MacMahon's formula we have
    $\per{Z(T)} = \Mac(d-a,d-b,d-c)$. Since $T$ is simply-connected, Proposition \ref{pro:same-sign} gives $|\det{Z(T)}| = \per{Z(T)}  = \Mac(d-a,d-b,d-c)$. The prime divisors of this integer are bounded above by
    $(d-a) + (d-b) + (d-c) - 1 = d-1$.
\end{proof}

Combining Propositions~\ref{pro:rep-enum} and~\ref{pro:ci-enum} we get the enumeration for a slightly more
complicated triangular region. (We will use this observation in Section~\ref{sec:type2}.) Clearly, the process of
removing a hexagon from a puncture can be repeated.

\begin{corollary} \label{cor:ci-nest}
    Let $T = T_d(x^{a+\alpha}, y^b, z^c, x^a y^{\beta}, x^a z^{\gamma})$, where the quadruples $(a,b,c,d)$ and $(\alpha,
    \beta, \gamma, d-a)$ are both as in Proposition~\ref{pro:ci-enum}. In particular, $a + \alpha + \beta + \gamma = b +
    c$ and $d = \frac{1}{2}(a+b+c)$. Then
    \[
        |\det{Z(T)}| = \per{Z(T)} = \Mac(d-a, d-b, d-c) \Mac(d-a-\alpha, d-a-\beta, d-a-\gamma),
    \]
    and the prime divisors of the enumeration are bounded above by $d-1$.
\end{corollary}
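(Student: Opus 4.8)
The plan is to reduce the region $T = T_d(x^{a+\alpha}, y^b, z^c, x^a y^{\beta}, x^a z^{\gamma})$ to a disjoint combination of two hexagonal regions, each enumerated by Proposition~\ref{pro:ci-enum}, and then apply the multiplicativity of Proposition~\ref{pro:rep-enum}. The key geometric observation is that the five generators split into two nested groups: the ``outer'' puncture data $x^{a+\alpha}, y^b, z^c$ governs a large hexagon, while the three generators $x^a y^\beta$, $x^a z^\gamma$, together with the $x$-puncture, carve out a smaller hexagonal monomial subregion $U$ sitting against the bottom edge (the $x^a$ face). Concretely, the monomial subregion $U$ associated to $x^a$ is the triangular region $T_{d-a}(x^{\alpha}, y^{\beta}, z^{\gamma})$, and the hypothesis that $(\alpha, \beta, \gamma, d-a)$ satisfies the conditions of Proposition~\ref{pro:ci-enum} guarantees $U$ is exactly a hexagon with side lengths $(d-a-\alpha,\, d-a-\beta,\, d-a-\gamma)$, whence $\per{Z(U)} = |\det Z(U)| = \Mac(d-a-\alpha, d-a-\beta, d-a-\gamma)$.

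First I would verify that $U$ is a balanced monomial subregion of $T$, so that Proposition~\ref{pro:rep-enum} applies and gives $|\det Z(T)| = |\det Z(T\setminus U)\cdot \det Z(U)|$. Next I would identify $T \setminus U$ with the hexagon $T_d(x^{a+\alpha}, y^b, z^c)$: removing the subregion $U$ from $T$ precisely fills in the small puncture region so that what remains is the triangular region of the three-generator ideal $(x^{a+\alpha}, y^b, z^c)$. Here I would invoke Proposition~\ref{pro:ci-enum} with the quadruple $(a+\alpha, b, c, d)$ --- noting $d = \tfrac12((a+\alpha)+b+c)$ follows from the stated relation $a+\alpha+\beta+\gamma = b+c$ together with $d = \tfrac12(a+b+c)$, since $\alpha + \beta + \gamma = b + c - a$ forces $(a+\alpha)+b+c = 2d$. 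This yields $|\det Z(T\setminus U)| = \Mac(d-a-\alpha, d-b, d-c)$.

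There is a small index bookkeeping point to get right: the first hexagon's side lengths should come out as $(d-a, d-b, d-c)$ as claimed in the statement, not $(d - a - \alpha, d-b, d-c)$. I would reconcile this by checking which factor the nesting produces: the outer hexagon $T_d(x^{a+\alpha}, y^b, z^c)$ has a puncture of side length $d-(a+\alpha)$ in the $x$-corner, and the inner subregion $U$ is seated inside the $x^a$-monomial region, so the correct reading is that $T$ itself is the hexagon $T_d(x^a, y^b, z^c)$ of side lengths $(d-a, d-b, d-c)$ \emph{with} the smaller hexagon $U = T_{d-a}(x^\alpha, y^\beta, z^\gamma)$ punched into its $x$-corner region. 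Thus multiplicativity gives the product $\Mac(d-a,d-b,d-c)\cdot \Mac(d-a-\alpha, d-a-\beta, d-a-\gamma)$ exactly as stated. The sign claim $|\det Z(T)| = \per Z(T)$ follows because both hexagonal pieces are simply connected and hence, by Proposition~\ref{pro:same-sign}, every tiling carries the same perfect matching sign. Finally, the prime-divisor bound follows from the bound in Proposition~\ref{pro:ci-enum}: the first factor has prime divisors at most $d-1$, and the second factor $\Mac(d-a-\alpha, d-a-\beta, d-a-\gamma)$ has prime divisors at most $(d-a-\alpha)+(d-a-\beta)+(d-a-\gamma) - 1 = (d-a)-1 < d-1$, so the overall bound $d-1$ holds.

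The main obstacle I anticipate is the precise geometric identification of $U$ as a hexagon and of $T \setminus U$ as the correct hexagonal region --- in particular, making sure the puncture $x^{a+\alpha}$ and the pair $x^a y^\beta, x^a z^\gamma$ interact as a clean nesting rather than creating unexpected overlaps with the $y^b$ or $z^c$ punctures. This is exactly the kind of containment hypothesis encoded by demanding that \emph{both} quadruples satisfy the triangle-inequality conditions of Proposition~\ref{pro:ci-enum}, so the work is to translate those inequalities into the statement that $U$ fits strictly inside the $x$-corner region without touching the other two large punctures, which then legitimizes treating $U$ as a balanced monomial subregion whose removal leaves a genuine hexagon.
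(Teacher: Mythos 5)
Your proposal is correct and follows essentially the same route as the paper: identify the monomial subregion $U$ of $T$ associated to $x^a$ with the hexagon $T_{d-a}(x^{\alpha}, y^{\beta}, z^{\gamma})$, observe that $T \setminus U = T_d(x^a, y^b, z^c)$, and multiply via Proposition~\ref{pro:rep-enum} together with Proposition~\ref{pro:ci-enum}. Your first-pass identification of $T \setminus U$ as $T_d(x^{a+\alpha}, y^b, z^c)$ was incorrect, but you catch and repair this in your bookkeeping paragraph, arriving at exactly the decomposition (and the same sign and prime-divisor arguments) that the paper uses.
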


\begin{proof}
    The region $T$ is obtained from $T_d(x^a, y^b, z^c)$ by replacing the puncture associated to $x^a$ by
    $T_{d-a}(x^{\alpha}, y^{\beta}, z^{\gamma})$. See Figure~\ref{fig:ci-ci}.  We conclude by using Proposition~\ref{pro:rep-enum}.
    \begin{figure}[!ht]
        \includegraphics[scale=1]{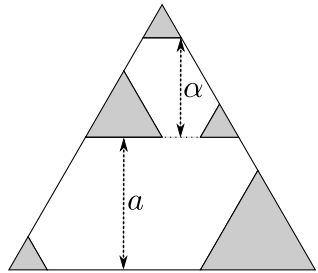}
        \caption{A  hexagon with a puncture replaced by a hexagon.}
        \label{fig:ci-ci}
    \end{figure}
   \end{proof}

As preparation for the next enumeration, we need a more general determinant calculation, which may be of independent interest.

\begin{lemma} \label{lem:split-binom-det}
    Let $M$ be an $n$-by-$n$ matrix  with entries
    \[
        (M)_{i,j} = \left\{
            \begin{array}{ll}
                \displaystyle \binom{p}{q + j - i}     & \mbox{if } 1     \leq j \leq m, \\[1.1em]
                \displaystyle \binom{p}{q + r + j - i} & \mbox{if } m + 1 \leq j \leq n, \\
            \end{array}
        \right.
    \]
    where $p,q,r,$ and $m$ are nonnegative integers and $1 \leq m \leq n$.  Then
    \[
        \det{M} = \Mac(m,q,r) \Mac(n-m, p-q-r, r) \frac{\HF(q+r)\HF(p-q)\HF(n+r)\HF(n+p)}{\HF(n+p-q)\HF(n+q+r)\HF(p)\HF(r)}.
    \]
\end{lemma}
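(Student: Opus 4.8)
The plan is to evaluate the determinant directly as a \emph{binomial determinant}, forgetting the splitting into two kinds of columns and recording only the common shape of the entries. Writing $c_j = q + j$ for $1 \le j \le m$ and $c_j = q + r + j$ for $m+1 \le j \le n$, every entry of $M$ has the form $(M)_{i,j} = \binom{p}{c_j - i}$, so that $M = \left(\binom{p}{c_j - i}\right)_{1 \le i,j \le n}$. The sequence $c_1 < c_2 < \cdots < c_n$ is strictly increasing, consecutive within each block but with a single gap of size $r$ inserted between $c_m = q+m$ and $c_{m+1} = q+r+m+1$. Thus the whole problem reduces to evaluating $\det\left(\binom{p}{c_j - i}\right)$ for this particular increasing sequence, after which one substitutes the two-block values of $c_j$.

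First I would give this determinant a lattice path meaning, exactly as in Subsection~\ref{sub:nilp}. Place sources $A_i$ and sinks $E_j$ in $\ZZ^2$ so that the number of lattice paths from $A_i$ to $E_j$, counted by the binomial formula recalled in Subsection~\ref{sub:nilp}, is precisely $\binom{p}{c_j - i}$; concretely one may take $A_i = (i,i)$ and $E_j = (c_j, c_j - p)$. Because both the sources and the sinks are linearly ordered along parallel diagonals in the same direction, only the identity permutation admits a family of non-intersecting paths, so the Lindstr\"om--Gessel--Viennot theorem identifies $\det M$ with the \emph{nonnegative} number of families of non-intersecting lattice paths from the $A_i$ to the $E_i$, in agreement with the manifestly positive right-hand side. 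Equivalently this count is the specialized Schur function $s_\lambda(\underbrace{1,\dots,1}_{p})$, where $\lambda$ is read off from the differences $c_j - j$: since $c_j - j$ equals $q$ for $j \le m$ and $q+r$ for $j > m$, the partition $\lambda$ is the \emph{fat hook}
\[
    \lambda = \bigl((q+r)^{\,n-m},\ q^{\,m}\bigr),
\]
that is, the union of a $q \times n$ rectangle and an $r \times (n-m)$ rectangle adjoined along its top $n-m$ rows.

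Next I would evaluate $s_\lambda(1^p)$ by the hook-content formula,
\[
    s_\lambda(1^p) = \prod_{(i,j) \in \lambda} \frac{p + j - i}{h_\lambda(i,j)},
\]
splitting the product according to the two-rectangle structure of $\lambda$. The cells lying in the $q \times n$ rectangle and those lying in the $r \times (n-m)$ rectangle contribute, after collecting the content factors together with the hook factors of the cells whose hooks stay inside a single rectangle, the two MacMahon box numbers $\Mac(m,q,r)$ and $\Mac(n-m,\,p-q-r,\,r)$; this uses the standard identification of a rectangular Schur specialization with a $\Mac$-value, together with MacMahon's formula recalled at the start of Subsection~\ref{sub:type1det}. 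The remaining hook and content terms come from the cells of the bottom $q$ columns whose arms cross the re-entrant corner into the $r$-block, and from the interaction of the two rectangles along that corner; assembling these leftover factors into hyperfactorials produces exactly the quotient
\[
    \frac{\HF(q+r)\,\HF(p-q)\,\HF(n+r)\,\HF(n+p)}{\HF(n+p-q)\,\HF(n+q+r)\,\HF(p)\,\HF(r)},
\]
which yields the claimed formula.

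The main obstacle is this last step: the careful bookkeeping of hook lengths near the inner (re-entrant) corner of the fat hook, where a cell of the $q \times n$ rectangle has an arm reaching into the $r$-extension only when it lies in one of the top $n-m$ rows. Tracking this row-dependence is precisely what couples the two rectangles and converts a naive product of two $\Mac$-values into the stated expression with its hyperfactorial correction. I would organize the computation by grouping cells into columns and telescoping the resulting factorial products into hyperfactorials, and I would verify the outcome against the degenerate cases $r = 0$ (where $\lambda$ collapses to a single rectangle and both $\Mac$-factors trivialize) and $m \in \{0,n\}$ as consistency checks before committing to the general bookkeeping.
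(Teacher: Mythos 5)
Your framework is sound and is essentially a repackaging of the paper's starting point: you recognize that all entries have the form $\binom{p}{c_j-i}$ for the strictly increasing sequence $c_j$ with a single gap of size $r$, and that such a determinant is a nonnegative count of non-intersecting lattice paths, equivalently a Schur polynomial $s_\lambda(1,\dots,1)$ ($p$ ones) for a two-rectangle (fat hook) shape. Your lattice-path placement is correct ($\binom{p}{c_j-i}$ does count the paths from $(i,i)$ to $(c_j,c_j-p)$, and the parallel-diagonal positioning forces the identity permutation). The paper instead quotes the closed-form evaluation \cite[Eq.~(12.5)]{CEKZ}, namely $\det M=\prod_{i<j}(L_j-L_i)\prod_{i=1}^n\frac{(p+i-1)!}{(n+p-L_i)!\,(L_i-1)!}$ with $L_j=c_j$, and then explicitly splits both products over the two blocks of $L_j$ and telescopes the resulting factorials into hyperfactorials; every step of that bookkeeping is displayed.

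The genuine gap is that you never perform the evaluation that constitutes the entire content of the lemma. Identifying $\det M$ with $s_\lambda(1^p)$ for the fat hook is the easy half; the claimed identity is precisely the assertion that the hook-content product for that shape equals $\Mac(m,q,r)\,\Mac(n-m,p-q-r,r)$ times the stated hyperfactorial quotient, and you defer exactly this ("assembling these leftover factors \dots produces exactly the quotient") while yourself naming it "the main obstacle." Moreover, the one structural claim you do make about the computation is suspect: you assert that the cells of the $q\times n$ rectangle whose hooks stay inside it contribute $\Mac(m,q,r)$. A rectangular Schur specialization $s_{(b^a)}(1^N)$ gives a $\Mac$ value whose parameters involve $a$, $b$, and $N-a$; for a $q\times n$ rectangle in $p$ variables this would produce parameters built from $n$, $q$, $p$, not the triple $(m,q,r)$, so the two $\Mac$ factors cannot arise from a clean "hooks inside each rectangle" split --- the coupling across the re-entrant corner is woven through both factors. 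Since the decomposition you propose is not obviously the right one and the factorial bookkeeping is not carried out, the proof as written does not establish the formula; to complete it you would need to either grind through the hook-content product (grouping by columns and telescoping, as you suggest, but actually doing it) or, as the paper does, start from the known product evaluation of the binomial determinant, which makes the two-block split transparent.
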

\begin{proof}
    We begin by  using~\cite[Equation~(12.5)]{CEKZ} to evaluate $\det{M}$ to be
    \[
        \prod_{1\leq i < j \leq n} (L_j - L_i) \prod_{i=1}^n \frac{(p+i-1)!}{(n+p-L_i)!(L_i-1)!},
    \]
    where
    \[
        L_j =
        \begin{cases}
            q + j & \text{if } 1 \leq j \leq m, \\
            q + r + j & \text{if } m + 1 \leq j \leq n.
        \end{cases}
    \]
    If we split the products in the previously displayed equation relative to the split in $L_j$, then we obtain the
    following equations:
    \begin{equation*}
        \begin{split}
            \prod_{1\leq i < j \leq n} (L_j - L_i)
                = & \left(\prod_{1\leq i < j \leq m} (j - i)\right) \left(\prod_{m< i < j \leq n} (j - i)\right) \left(\prod_{1\leq i \leq m < j \leq n} (r+j-i)\right) \\[0.3em]
                = & \left(\HF(m)\right) \left(\HF(n-m)\right) \left(\frac{\HF(n+r) \HF(r)}{\HF(n+r-m) \HF(m+r)}\right)
        \end{split}
    \end{equation*}
    and
    \begin{equation*}
        \begin{split}
            \prod_{i=1}^n \frac{(p+i-1)!}{(n+p-L_i)!(L_i-1)!}
                = & \left( \prod_{i=1}^{n}(p+i-1)! \right) \left( \prod_{i=1}^m \frac{1}{(n+p-q-i)!(q+i-1)!}\right) \\[0.3em]
                  & \left( \prod_{i=m+1}^{n} \frac{1}{(n+p-q-r-i)!(q+r+i-1)!}\right) \\[0.3em]
                = & \left( \frac{\HF(n+p)}{\HF(p)} \right) \left( \frac{\HF(n+p-m-q)\HF(q)}{\HF(n+p-q)\HF(m+q)}  \right) \\[0.3em]
                  & \left( \frac{\HF(p-q-r)\HF(m+q+r)}{\HF(n+p-m-q-r)\HF(n+q+r)}  \right).
        \end{split}
    \end{equation*}
    Bringing these equations together we get that $\det{M}$ is
    {\footnotesize \[
        \frac{\HF(m)\HF(q)\HF(r)\HF(m+q+r)}{\HF(m+r)\HF(m+q)}
        \frac{\HF(n-m)\HF(p-q-r)\HF(n+p-m-q)}{\HF(n+r-m)\HF(n+p-m-q-r)}
        \frac{\HF(n+r)\HF(n+p)}{\HF(p)\HF(n+p-q)\HF(n+q+r)},
    \] }
    which, after minor manipulation, yields the claimed result.
\end{proof}

\begin{remark} \label{rem:split-binom-det}
    The preceding lemma generalizes \cite[Lemma~2.2]{LZ}, which handles the case $r = 1$.  Furthermore, if $r = 0$,
    then $\det{M} = \Mac(n, p-q, q)$, as expected (see the running example, $\det \binom{a+b}{a-i+j}$, in~\cite{Kr}).
\end{remark}

We now show that a tileable, simply-connected triangular region with four non-floating punctures has a Mahonian-type determinant. This
particular region is of interest in Section~\ref{sec:type2}. While in the previous evaluations we considered
a bi-adjacency matrix,  we work primarily with a lattice path matrix this time and then use Theorem~\ref{thm:detZN}.

\begin{proposition} \label{pro:two-mahonian}
    Let $T = T_d(x^a, y^b, z^c, x^{\alpha} y^{\beta})$, where $d = \frac{1}{3}(a+b+c+\alpha+\beta)$ is an integer,
    $0 < \alpha < a$, $0 < \beta < b$, and $\max\{a,b,c,\alpha+\beta\} \leq d \leq \min\{a + \beta, \alpha + b, a+c, b+c\}$.
    Then $|\det{Z(T)}| = \per{Z(T)}$ is
    \begin{equation*}
        \begin{split}
                   & \Mac(a+\beta - d,d-a,d-(\alpha+\beta)) \Mac(\alpha+b-d,d-b,d-(\alpha+\beta)) \\[0.3em]
            \times & \frac{\HF(d-a+d-(\alpha+\beta))\HF(d-b+d-(\alpha+\beta))\HF(d-c+d-(\alpha+\beta))\HF(d)}{\HF(a)\HF(b)\HF(c)\HF(d-(\alpha+\beta))}.
        \end{split}
    \end{equation*}
    Moreover, the prime divisors of the enumeration are  bounded above by $d - 1$.
\end{proposition}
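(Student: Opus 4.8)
The plan is to work with the lattice path matrix $N = N(T_d(x^a,y^b,z^c,x^\alpha y^\beta))$ and to recognize it as an instance of the split-binomial matrix of Lemma~\ref{lem:split-binom-det}. First I would record the geometry: the punctures associated to $x^a$, $y^b$, and $z^c$ sit in the three corners of $\mathcal{T}_d$ with side lengths $d-a$, $d-b$, $d-c$, while the puncture associated to $x^\alpha y^\beta$ lies on the upper-left edge (its $z$-exponent is $0$) with side length $d-(\alpha+\beta)$. The inequalities $\max\{a,b,c,\alpha+\beta\}\le d$ guarantee that all four side lengths are nonnegative, and $d\le\min\{a+\beta,\alpha+b,a+c,b+c\}$ guarantees that no two punctures overlap badly and that no monomial subregion is $\dntri$-heavy, so that $T$ is balanced, tileable, and simply connected with exactly four non-floating punctures. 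In particular $N$ is a square matrix and there are no floating punctures.

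Next I would set up the lattice $L(T)$ and locate its $A$- and $E$-vertices as in Subsection~\ref{sub:nilp}, using the orthogonalization that sends the vertex labelled $x^i y^j z^{d-1-i-j}$ to $(d-1-j,i)\in\ZZ^2$. The key structural point is that the edge puncture $x^\alpha y^\beta$ interrupts one of the two boundary runs, so that after orthogonalizing the relevant index set splits into two arithmetic progressions separated by a gap governed by the side length $d-(\alpha+\beta)$ of that puncture; up to transposing $N$ this is exactly the column structure $L_j$ of Lemma~\ref{lem:split-binom-det}. Reading off the binomial entries $N_{(i,j)}=\binom{e}{s-p}$ precisely as in the proof of Proposition~\ref{pro:interp-N}, this exhibits $N$ in the form of Lemma~\ref{lem:split-binom-det}, with $p,q,r,m,n$ determined by the four side lengths $d-a,d-b,d-c,d-(\alpha+\beta)$. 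Carrying out this identification — correctly locating the $A$- and $E$-vertices around all four punctures and matching every parameter — is the most delicate part of the argument.

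With the matrix in hand, I would apply Lemma~\ref{lem:split-binom-det} to evaluate $\det N$ and then simplify the resulting hyperfactorial expression. Writing the four side lengths as $A=d-a$, $B=d-b$, $C=d-c$, $D=d-(\alpha+\beta)$, one has the convenient identity $A+B+C+D=d$, and the output of the lemma regroups into the two MacMahon factors $\Mac(a+\beta-d,d-a,d-(\alpha+\beta))$ and $\Mac(\alpha+b-d,d-b,d-(\alpha+\beta))$ times the stated residual quotient of hyperfactorials. Since $T$ is balanced this gives $|\det N|$, and Theorem~\ref{thm:detZN} identifies it with $|\det Z(T)|$; because $T$ has no floating punctures, Proposition~\ref{pro:same-sign} applies vacuously and yields $\per Z(T)=|\det Z(T)|$, giving the first claim.

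It remains to bound the prime divisors by $d-1$. The two MacMahon factors are harmless: the arguments of the first sum to $d-\alpha$ and of the second to $d-\beta$, so each contributes only primes strictly below $d-1$. The subtlety, which I expect to be the main obstacle alongside the matrix identification, is that the residual hyperfactorial quotient is not itself an integer, so no factorwise bound is available; one must show instead that the full (integral) enumeration has no prime factor exceeding $d-1$. Using $A+B+C+D=d$, the residual quotient takes the symmetric shape $\frac{\HF(A+D)\HF(B+D)\HF(C+D)\HF(A+B+C+D)}{\HF(A+B+D)\HF(A+C+D)\HF(B+C+D)\HF(D)}$, and I would return to the product form supplied by \cite[Equation~(12.5)]{CEKZ} used in Lemma~\ref{lem:split-binom-det}, in which every factor is either a difference $L_j-L_i$ or a ratio of factorials whose relevant arguments are bounded by $d-1$. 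A Legendre--Kummer valuation count then shows that for every prime $\ell>d-1$ the numerator and denominator valuations coincide, so $\ell\nmid\per Z(T)$, completing the bound.
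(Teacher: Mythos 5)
Your proposal follows the paper's own proof essentially step for step: compute the lattice path matrix $N(T)$, recognize its transpose as the split-binomial matrix of Lemma~\ref{lem:split-binom-det} (the paper takes $m=\alpha+b-d$, $n=d-c$, $p=c$, $q=d-b$, $r=d-(\alpha+\beta)$), and pass to $\per{Z(T)}=|\det{Z(T)}|=|\det{N(T)}|$ via Proposition~\ref{pro:same-sign} and Theorem~\ref{thm:detZN}. On the prime bound you are actually more careful than the paper's one-sentence justification: your observation that the residual hyperfactorial quotient need not be an integer is correct (e.g.\ it equals $3/2$ for $T_4(x^3,y^3,z^3,xy^2)$), and your valuation argument via the product formula of \cite[Equation~(12.5)]{CEKZ} -- in which every numerator factor involves only primes at most $d-1$ -- validly closes that gap.
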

\begin{proof}
    Note that $\max\{a,b,c,\alpha+\beta\} \leq d$ implies that all four punctures have nonnegative side length. Furthermore,
    the condition $d \leq \min\{a + \beta, \alpha + b, a+c, b+c\}$ guarantees that none of the punctures overlap. See Figure \ref{fig:two-mahonian}.

    We now compute the lattice path matrix $N(T)$ as introduced in Subsection \ref{sub:nilp}. Recall that a point in the
    lattice $L(T)$ with label $x^u y^v z^{d-1-(u+v)}$ is identified with the point $(d-1-v, u) \in \ZZ^2$. Thus, the
    starting points of the lattice paths are
    \begin{equation*}
        A_i =
        \begin{cases}
            (d-b+i-1, d-b+i-1) & \text{if } 1 \leq i \leq \alpha + b - d, \\
            (2d-(\alpha + \beta+b) + i-1, 2d-(\alpha + \beta+b) + i-1) & \text{if } \alpha + b - d <  i \leq d-c.
        \end{cases}
    \end{equation*}
    For the end points of the lattice paths, we get
    \begin{equation*}
        E_j  = (c-1+j, j-1), \quad \text{where } 1 \leq j \leq d-c.
    \end{equation*}
    Thus, the entries of the lattice path matrix $N(T)$ are
    \[
        (N(T))_{i, j} =
        \left\{
            \begin{array}{ll}
                \displaystyle \binom{c}{d-b+i-j}                    & \mbox{if } 1                \leq i \leq \alpha+b-d, \\[0.8em]
                \displaystyle \binom{c}{2d-(\alpha+\beta+b)+i-j}    & \mbox{if } \alpha+b-d  <  i \leq d-c. \\
            \end{array}
        \right.
    \]
    Transposing $N(T)$, we get a matrix of the form in Lemma~\ref{lem:split-binom-det}, where $m = \alpha + b -d$, $n =
    d-c$, $p = c$, $q = d-b$, and $r = d-(\alpha+\beta)$. Thus, we know $\det N(T)$. Since $T$ is simply-connected,
Proposition \ref{pro:same-sign} and Theorem   \ref{thm:detZN} give $ \per{Z(T)} = |\det{Z(T)}| = \det N(T)$.

    Finally, as $d-\alpha$ and $d-\beta$ are smaller than $d$, the prime divisors of $\det{N(T)} = |\det{Z(T)}|$ are
    bounded above by $d-1$.
\end{proof}

\begin{remark} \label{rem:two-mahonian}
    The evaluation of the determinant in Proposition \ref{pro:two-mahonian} includes two Mahonian terms and a third non-Mahonian
    term. The Mahonian terms can be identified in the triangular region.
    \begin{figure}[!ht]
        \includegraphics[scale=1]{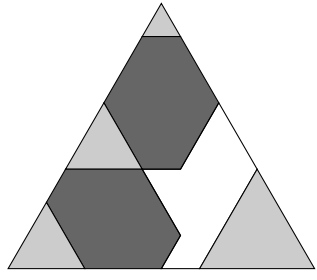}
        \caption{The darkly-shaded hexagons correspond to the two Mahonian terms.}
        \label{fig:two-mahonian}
    \end{figure}
    See Figure~\ref{fig:two-mahonian} where the darkly-shaded hexagons correspond to the Mahonian terms. It is not clear
    (to us) where the third term comes from, though it may be of interest that if one subtracts $d-(\alpha+\beta)$ from
    each hyperfactorial parameter, before the evaluation, then what remains is $\Mac(d-a,d-b,d-c)$.
\end{remark}~

\section{Complete intersections} \label{sec:type1}

In this short section we give a first illustration of our methods by applying them  to the Artinian monomial ideals of $R = K[x,y,z]$ with the fewest number of generators, that is, to the ideals of the form $I = (a^a, y^b, z^c)$. These are
monomial complete intersections and exactly the Artinian monomial ideals of $R$ with type 1. The question whether they have the weak Lefschetz property has motivated a great deal of
research (see \cite{MN-survey} and Remark~\ref{rem:ci-history} below).

Throughout the remainder of this paper we assume that the base field is infinite.
Recall the following result of Reid, Roberts, and Roitman about the shape of Hilbert functions of  complete
intersections.

\begin{lemma}{\cite[Theorem~1]{RRR}} \label{lem:h-ci}
    Let $I = (a^a, y^b, z^c)$, where $a, b,$ and $c$ are positive integers. Then the Hilbert function $h = h_{R/I}$ of
    $R/I$ has the following properties:
    \begin{enumerate}
        \item $h(j-2) < h(j-1)$ if and only if $1 \leq j < \min \{a+b, a+c, b+c, \frac{1}{2}(a+b+c)\}$;
        \item $h(j-2) = h(j-1)$ if and only if $\min \{a+b, a+c, b+c, \frac{1}{2}(a+b+c)\} \leq j \leq \max \{a, b, c, \frac{1}{2}(a+b+c)\}$; and
        \item $h(j-2) > h(j-1)$ if and only if $\max \{a, b, c, \frac{1}{2}(a+b+c)\} < j \leq a+b+c-1$.
    \end{enumerate}
\end{lemma}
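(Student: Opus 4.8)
The plan is to prove Lemma~\ref{lem:h-ci} by giving an explicit formula for the Hilbert function $h = h_{R/I}$ of a monomial complete intersection and then analyzing its first differences. For the quotient $R/I$ with $I = (x^a, y^b, z^c)$, the graded pieces of $R/I$ have a basis given by the monomials $x^p y^q z^r$ with $p < a$, $q < b$, $r < c$, so $h(j) = \dim_K [R/I]_j$ counts lattice points. A clean way to encode this is via the Hilbert series $\sum_j h(j) t^j = \frac{(1-t^a)(1-t^b)(1-t^c)}{(1-t)^3}$, from which $h(j)$ can be read off by inclusion-exclusion as an alternating sum of binomial coefficients $\binom{j-\sigma+2}{2}$, where $\sigma$ ranges over the sums of subsets of $\{a,b,c\}$. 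The first difference $h(j-1) - h(j-2)$ then has a similar expansion whose sign I need to track.

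The cleaner route, and the one I would actually carry out, is to work directly with the combinatorics already set up in the paper. By Proposition~\ref{pro:ci-enum} and the discussion of triangular regions, the quantities $h(d-2) = \#\dntri(T_d(I))$ and $h(d-1) = \#\uptri(T_d(I))$ count the downward- and upward-pointing unit triangles of the triangular region $T_d(x^a,y^b,z^c)$, and the sign of $h(j-1)-h(j-2)$ is exactly whether this region is $\uptri$-heavy, balanced, or $\dntri$-heavy. So the first step is to compute $h(j-1) - h(j-2)$ explicitly as a function of $j$. Using the substitution $z = x+y$ and reducing to two variables (as in the proof of Proposition~\ref{pro:interp-N}), or directly from the binomial expansion above, one finds that the difference is governed by how many of the ``puncture constraints'' $j \geq a+b$, $j \geq a+c$, $j \geq b+c$, and the symmetric-point condition involving $\tfrac{1}{2}(a+b+c)$ are active. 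The key structural fact is unimodality of $h$ together with the symmetry $h(j) = h(a+b+c-3-j)$ coming from the Gorenstein (type one) property of a complete intersection: the socle degree is $a+b+c-3$, so the Hilbert function is symmetric about $\tfrac{a+b+c-3}{2}$.

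With symmetry and unimodality in hand, the three cases reduce to locating the ascending, flat, and descending stretches. First I would establish that $h$ is strictly increasing up to its first peak, flat across the (possibly empty) plateau at the top, and strictly decreasing afterward — this is standard for the Hilbert function of an Artinian complete intersection and is essentially the content of \cite{RRR}. Then I would pin down the two transition points precisely. The left transition, where strict increase stops, occurs at $j = \min\{a+b, a+c, b+c, \tfrac12(a+b+c)\}$: for $j$ below all of $a+b,a+c,b+c$ the region $T_j(I)$ has no punctures overlapping in a way that creates heaviness and is $\uptri$-heavy, and the symmetric bound $\tfrac12(a+b+c)$ caps the increase when the region becomes balanced at its center. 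By the symmetry $h(j) = h(a+b+c-3-j)$, the right transition, where strict decrease begins, is the reflected point $j = \max\{a,b,c,\tfrac12(a+b+c)\}$, and the flat middle is exactly the interval between these two values. I would verify the boundary arithmetic — that the reflection of $\min\{a+b,a+c,b+c,\tfrac12(a+b+c)\}$ under $j \mapsto a+b+c - j$ lands on $\max\{a,b,c,\tfrac12(a+b+c)\}$ — which is a short check: for instance $a+b+c-(a+b) = c$ and $a+b+c - \tfrac12(a+b+c) = \tfrac12(a+b+c)$.

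I expect the main obstacle to be the careful case analysis at the two transition points, particularly handling the ties between the $\tfrac12(a+b+c)$ term and the pairwise-sum terms (e.g.\ when $\tfrac12(a+b+c)$ equals or is dominated by one of $a+b, a+c, b+c$) and confirming that the inequalities are strict versus weak on exactly the claimed ranges. The symmetry argument cleanly halves the work by deriving case~(iii) from case~(i), so the real content is (i) together with establishing strict unimodality; once the single threshold $\min\{a+b,a+c,b+c,\tfrac12(a+b+c)\}$ is shown to be the exact point where $h(j-2) < h(j-1)$ fails, cases~(ii) and~(iii) follow by reflection and the complementary pigeonhole on the finite support $0 \le j-1 \le a+b+c-3$.
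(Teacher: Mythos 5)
The paper offers no proof of this lemma at all: it is quoted directly from Reid--Roberts--Roitman \cite[Theorem~1]{RRR}, so there is no internal argument to compare yours against, and any self-contained proof is a genuine addition. Your outline is a correct route. The computation you propose does close: from the Hilbert series $(1-t^a)(1-t^b)(1-t^c)/(1-t)^3$ one gets, writing $(n)_+$ for $\max\{0,n\}$,
\[
h(j-1)-h(j-2) = (j)_+ - (j-a)_+ - (j-b)_+ - (j-c)_+ + (j-a-b)_+ + (j-a-c)_+ + (j-b-c)_+ - (j-a-b-c)_+,
\]
and a finite case analysis on which truncations are active shows this is positive exactly for $1 \le j < \min\{a+b,a+c,b+c,\tfrac12(a+b+c)\}$, which is part (i); the decisive subcase is the one where all of $a,b,c$ are below $j$ yet $j$ is below every pairwise sum, where the difference equals $a+b+c-2j$ and positivity is precisely $j<\tfrac12(a+b+c)$. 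Your symmetry step is also sound: $h(j)=h(a+b+c-3-j)$ (Gorenstein duality, or the bijection $x^py^qz^r\mapsto x^{a-1-p}y^{b-1-q}z^{c-1-r}$ on the monomial basis) converts (i) into (iii) under $j\mapsto a+b+c-j$, and since $\min\{a+b,a+c,b+c,\tfrac12(a+b+c)\}\le\tfrac12(a+b+c)\le\max\{a,b,c,\tfrac12(a+b+c)\}$, part (ii) is the complementary range inside $1\le j\le a+b+c-1$. Two cautions. First, your appeal to strict unimodality as ``essentially the content of \cite{RRR}'' is circular --- that is the very statement being proved --- but it is also unnecessary, since the sign analysis of the displayed first difference subsumes it; you should drop that sentence rather than lean on it. Second, the case analysis you defer to the end is the entire mathematical content here, so as written this is a viable plan rather than a finished proof; every step you name does, however, go through.
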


Depending on the characteristic of the base field we get the following sufficient conditions that guarantee the weak
Lefschetz property.

\begin{theorem} \label{thm:type-one}
    Let $I = (x^a, y^b, z^c)$, where $a, b,$ and $c$ are positive integers.  Set $d = \flfr{a+b+c}{2}$. Then:
    \begin{enumerate}
        \item If $d < \max\{a,b,c\}$, then $R/I$ has the weak Lefschetz property, regardless of the characteristic of $K$.
        \item If $a+b+c$ is even, then $R/I$ has the weak Lefschetz property in characteristic $p$ if and only if $p$
            does not divide $\Mac(d-a, d-b, d-c)$.
        \item If $a+b+c$ is odd, then $R/I$ has the weak Lefschetz property in characteristic $p$ if and only if $p$
            does not divide any of the integers
            \[
                \frac{\binom{d-1}{a-1}}{\binom{d-1}{i}} \binom{d-c}{a-i-1} \Mac(d-a-1, d-b, d-c),
            \]
            where $d-1-b < i < a$.
    \end{enumerate}

    In any case, $R/I$ has the weak Lefschetz property in characteristic $p$ if $p = 0$ or $p \geq \flfr{a+b+c}{2}$.
\end{theorem}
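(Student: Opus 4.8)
The plan is to derive the final assertion as a purely formal consequence of parts (i)--(iii), the only additional input being bounds on the prime divisors of the enumerations that occur there. Write $d = \flfr{a+b+c}{2}$ and suppose the characteristic of $K$ is $0$ or a prime $p \geq d$; the characteristic-$0$ case is vacuous (no prime divides a nonzero integer), so I focus on $p \geq d$ and show that $p$ divides none of the relevant integers. If $d < \max\{a,b,c\}$, then part (i) already gives the weak Lefschetz property in \emph{every} characteristic and there is nothing to prove, so I may assume $d \geq \max\{a,b,c\}$ and argue according to the parity of $a+b+c$.

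If $a+b+c$ is even, then $d = \tfrac{1}{2}(a+b+c)$ and part (ii) reduces the claim to the single statement $p \nmid \Mac(d-a,d-b,d-c)$. Since $(d-a)+(d-b)+(d-c) = d$, Proposition~\ref{pro:ci-enum} shows that the prime divisors of this Mahonian number are at most $d-1$. As $p \geq d$, it follows that $p \nmid \Mac(d-a,d-b,d-c)$, so part (ii) yields the weak Lefschetz property.

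If $a+b+c$ is odd, then $a+b+c = 2d+1$ and part (iii) requires that $p$ divide none of the integers
\[
    \frac{\binom{d-1}{a-1}}{\binom{d-1}{i}}\binom{d-c}{a-i-1}\Mac(d-a-1,\,d-b,\,d-c), \qquad d-1-b < i < a.
\]
I would bound the prime divisors of each such integer by $d-1$ through a $p$-adic valuation argument applied factor by factor. The binomials $\binom{d-1}{a-1}$, $\binom{d-1}{i}$, and $\binom{d-c}{a-i-1}$ involve only integers at most $d-1$ (for the last, note $d-c \leq d-1$ since $c \geq 1$), so none has a prime divisor $\geq d$. The Mahonian factor $\Mac(d-a-1,d-b,d-c)$ has argument sum $d-2$, and its hyperfactorial expression is a quotient of hyperfactorials $\HF(m)$ with $m \leq d-1$, each a product of factorials of integers less than $d$; hence it too has no prime divisor $\geq d$. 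Consequently, for the prime $p \geq d$ the $p$-adic valuation of each of these four quantities vanishes, and since the displayed product is an integer its valuation is the signed sum of theirs and so is likewise zero. Thus $p$ divides none of these integers, and part (iii) gives the weak Lefschetz property.

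The step I expect to be the main obstacle is the quotient $\binom{d-1}{a-1}/\binom{d-1}{i}$ in case (iii): individually this need not be an integer, so one cannot bound its prime divisors by treating numerator and denominator separately. The valuation argument circumvents this by using only that the \emph{whole} product is a nonzero integer, together with the elementary facts that the prime divisors of $\binom{n}{k}$ are at most $n$ and that each hyperfactorial $\HF(m)$ with $m < d$ has prime divisors at most $d-1$.
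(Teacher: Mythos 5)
Your argument addresses only the final sentence of the theorem (``$R/I$ has the weak Lefschetz property if $p=0$ or $p \geq \lfloor (a+b+c)/2\rfloor$''), and you say so explicitly: you take parts (i)--(iii) as given inputs. That is the genuine gap. Parts (i)--(iii) are the substantive content of the theorem and are exactly what the paper spends its proof establishing: one must first reduce the weak Lefschetz property to the maximal-rank condition on the bi-adjacency matrices $Z(T_d(I))$ and $Z(T_{d+1}(I))$ (Corollary~\ref{cor:wlp-Z}, using the socle degree $a+b+c-3 \geq d-2$ and Lemma~\ref{lem:h-ci} to locate the peak); for (i) one observes that when $d < \max\{a,b,c\}$ the two punctures of $T_d(I)$ overlap, forcing a unique tiling and $|\det Z| = 1$; for (ii) one identifies $T_d(I)$ with a hexagon and invokes MacMahon's formula via Proposition~\ref{pro:ci-enum}; and for (iii) one must argue that $T_{d+1}(I)$ is a $180^\circ$ rotation of $T_d(I)$, that by Proposition~\ref{pro:restricted-only} only the restricted maximal minors $T_d(x^a,y^b,z^c,x^iy^{d-1-i})$ need be checked, and then evaluate their determinants via Proposition~\ref{pro:two-mahonian} and simplify to the displayed formula. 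None of this appears in your proposal, so as a proof of the stated theorem it is incomplete.

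The part you do prove is correct and follows the same route as the paper, which likewise deduces the last sentence from the bound $d-1$ on the prime divisors of the enumerations in Propositions~\ref{pro:ci-enum} and~\ref{pro:two-mahonian}. Your factor-by-factor $p$-adic valuation argument in case (iii) --- using that the prime divisors of $\binom{n}{k}$ are at most $n$, that the Mahonian factor has argument sum $d-2$, and that the whole product is a positive integer even though $\binom{d-1}{a-1}/\binom{d-1}{i}$ need not be --- is a slightly more careful justification than the paper's one-line appeal to those propositions, and it is a worthwhile observation. But it cannot substitute for the missing proofs of (i)--(iii).
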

\begin{proof}
    The algebra $R/I$ has exactly one socle generator. It  has degree $a+b+c-3 \geq d-2$.

    If $d < \max\{a,b,c\}$, then without loss of generality we may assume $a > d$, that is, $a > b+c$. In this case,
    $T_d(I)$ has two punctures, one of length $d-b$ and one of length $d-c$. Moreover, $d-b + d-c = a > d$ so the two
    punctures overlap. Hence $T_d(I)$ is balanced and has a unique tiling. That is, $|\det{Z(T)}| = 1$ and so $R/I$ has
    the weak Lefschetz property, regardless of the characteristic of $K$ (see Corollary~\ref{cor:wlp-Z}).

    Suppose $d \geq \max\{a,b,c\}$.  By Lemma~\ref{lem:h-ci}, we have $h_{R/I}(d-2) \leq h_{R/I}(d-1) > h_{R/I}(d)$.

    Assume $a+b+c$ is even. Then Proposition~\ref{pro:ci-enum} gives that $|\det{Z(T_d(I))}| = \Mac(d-a, d-b, d-c)$, and
    so claim (ii) follows by Corollary~\ref{cor:wlp-Z}.

    Assume $a+b+c$ is odd, and so $d = \frac{1}{2}(a+b+c-1)$. In this case it is enough to find non-trivial maximal
    minors of $T_{d}(I)$ and $T_{d+1}(I)$ by Corollary~\ref{cor:wlp-Z}. Consider the hexagonal regions formed by the
    present unit triangles of each $T_{d+1}(I)$ and $T_{d}(I)$. The former hexagon is obtained from the latter by a
    rotation about $180^{\circ}$. Thus, we need only consider the maximal minors of $T_{d}(I)$. This region has exactly
    one more upward-pointing triangle than downward-pointing triangle. Hence, by Proposition~\ref{pro:restricted-only},
    it suffices to check whether the restricted maximal minors of $T_{d}(I)$ have maximal rank. These minors are exactly
    $T_{i} := T_d(x^a, y^b, z^c, x^{i} y^{d-1-i})$, where $d-1-b < i < a$. Using Proposition~\ref{pro:two-mahonian}, we
    get that $|\det{Z(T_{i})}|$ is
    \[
       \Mac(a -1-i, d - a, 1) \Mac(i + b -d, d-b, 1) \frac{\HF(d-a+1) \HF(d-b+1) \HF(d-c+1) \HF(d)}{\HF(a)\HF(b)\HF(c)\HF(1)},
    \]
    where we notice $d - (i + (d-1-i)) = 1$. Since $\Mac(n, k, 1) = \binom{n+k}{k}$ and $\HF(n) = (n-1)! \HF(n-1)$, for
    positive integers $n$ and $k$, we can rewrite $|\det{Z(T_{i})}|$ as
    \[
        \binom{d-1-i}{d-a} \binom{i}{d-b} \frac{(d-b)! (d-c)!}{(a-1)!} \Mac(d-a-1, d-b, d-c).
    \]
    Simplifying this expression, we get part (iii).

    Finally, using both Propositions~\ref{pro:ci-enum} and~\ref{pro:two-mahonian} we see that the prime divisors of
    $|\det{Z(T_{i})}|$ are bounded above by $d-1$ in each case.
\end{proof}

As announced, we briefly comment on the history of the last result and the research it motivated.

\begin{remark} \label{rem:ci-history}
    The presence of the weak Lefschetz property for monomial complete intersections has been studied by many authors.
    The fact that \emph{all} monomial complete intersections, in any number of variables, have the strong Lefschetz
    property in characteristic zero was proven first by Stanley~\cite{Stanley-1980} using the Hard Lefschetz Theorem.
    (See \cite{Co}, and the references contained therein, for more on the history of this theorem.) However, the weak
    Lefschetz property can fail in positive characteristic.

    The weak Lefschetz property in arbitrary characteristic in the case where one generator has much larger degree than
    the others (case (i) in the preceding proposition) was first established by Watanabe~\cite[Corollary~2]{Wa} for
    arbitrary complete intersections in three variables, not just monomial ones. Migliore and
    Mir\'o-Roig~\cite[Proposition~5.2]{MM} generalized this to complete intersections in $n$ variables.

    Part (ii) of the above result was first established by the authors~\cite[Theorem~4.3]{CN-IJM} (with an extra
    generator of sufficiently large degree), and independently by Li and Zanello~\cite[Theorem~3.2]{LZ}. The latter also
    proved part (iii) above (use $i = a - k$). However, while both papers mentioned the connection to lozenge tilings of
    hexagons, it was Chen, Guo, Jin, and Li~\cite{CGJL} who provided the first combinatorial explanation. In particular,
    the case (ii) was studied in~\cite[Theorem~1.2]{CGJL}. We also note that~\cite[Theorem~4.3]{LZ} can be recovered
    from Theorem~\ref{thm:type-one} if we set $a = \beta + \gamma$, $b = \alpha + \gamma$, and $c = \alpha + \beta$.

    More explicit results have been found in the special case where all generators have the same degree, i.e., $I_a =
    (x^a, y^a, z^a)$. Brenner and Kaid used the idea of a syzygy gap to explicitly classify the prime characteristics in
    which $I_a$ has the weak Lefschetz property~\cite[Theorem~2.6]{BK-p}. Kustin, Rahmati, and Vraciu used this result
    in \cite{KRV}, in which they related the presence of the weak Lefschetz property of $R/I_a$ to the finiteness of the
    projective dimension of $I_a : (x^n + y^n + z^n)$. Moreover, Kustin and Vraciu later gave an alternate explicit
    classification of the prime characteristics in which $I_a$ has the weak Lefschetz property~\cite[Theorem~4.3]{KV}.

    As a final note, Kustin and Vraciu~\cite{KV} also gave an explicit classification of the prime characteristics in
    which monomial complete intersections in arbitrarily many variables with all generators of the same degree have the
    weak Lefschetz property. This was expanded by the first author~\cite[Theorem~7.2]{Co} to an explicit classification
    of the prime characteristics, in which the algebra has the \emph{strong Lefschetz property}. In this work another
    combinatorial connection was used to study the presence of the weak Lefschetz property for monomial complete
    intersections in arbitrarily many variables.
\end{remark}

\section{Type $2$ monomial ideals} \label{sec:type2}

Boij, Migliore, Mir\'o-Roig, Zanello, and the second author proved in \cite[Theorem~6.2]{BMMNZ} that the Artinian
monomial algebras of type two in three variables that are \emph{level} have the weak Lefschetz property in
characteristic zero. The proof given there is surprisingly intricate and lengthy. In this section, we establish a more
general result using techniques derived in the previous sections.

To begin, we describe the Artinian monomial ideals $I$ in $R = K[x,y,z]$ such that $R/I$ has type two, that is, its socle is
of the form $\soc (R/I) \cong K(-s) \oplus K(-t)$. The algebra $R/I$ is level if the socle degrees $s$ and $t$ are
equal. The classification in the level case has been established in \cite[Proposition~6.1]{BMMNZ}. The following more
general result is obtained similarly.

\begin{lemma} \label{lem:classify-type-two}
    Let $I$ be an Artinian monomial ideal in $R = K[x,y,z]$ such that $R/I$ is of type $2$. Then, up to a change of
    variables, $I$ has one of the following two forms:
    \begin{enumerate}
        \item $I = (x^a, y^b, z^c, x^{\alpha} y^{\beta})$, where $0 < \alpha < a$ and $0 < \beta < b$. In this case, the
            socle degrees of $R/I$ are $a + \beta + c-3$ and $\alpha + b + c-3$. Thus, $I$ is level if and only if
            $a - \alpha = b - \beta$.
        \item $I = (x^a, y^b, z^c, x^{\alpha} y^{\beta}, x^{\alpha} z^{\gamma})$, where $0 < \alpha < a$, $0 < \beta < b$,
            and $0 < \gamma < c$. In this case, the socle degrees of $R/I$ are $a + \beta + \gamma-3$ and $\alpha + b + c-3$.
            Thus, $I$ is level if and only if $a - \alpha = b - \beta + c - \gamma$.
    \end{enumerate}
\end{lemma}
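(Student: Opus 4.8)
The plan is to classify the Artinian monomial ideals $I \subset R = K[x,y,z]$ with $R/I$ of type two by analyzing the minimal free resolution, whose last free module has exactly two summands $R(-t_1) \oplus R(-t_2)$ since the type equals the number of socle generators. Equivalently, I will work combinatorially with the triangular-region picture: the punctures of $T_d(I)$ encode the minimal generators, and the two socle generators correspond to the two ``innermost corners'' that are not dominated by any other generator. First I would recall that the socle of $R/I$ is dual to the top of the resolution, so type two forces exactly two maximal ``staircase corners'' in the monomial staircase of $I$. The minimal generators of $I$ lie along this staircase, and I would argue that having exactly two socle generators constrains the staircase so that, after a permutation of the variables $x,y,z$ (the allowed change of variables), only the two listed configurations can arise.

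The key steps, in order, are as follows. Since $R/I$ is Artinian, $I$ must contain pure powers $x^a, y^b, z^c$ for some $a,b,c>0$; I take these to be the minimal pure-power generators. Any remaining minimal generator $x^i y^j z^k$ must have all exponents strictly positive in at least two variables (else it would be divisible by a pure power). I would then count socle generators via the Koszul/Eliahou--Kervaire combinatorics: each ``outer corner'' of the staircase contributes one socle element. Requiring precisely two such corners, I analyze how many non-pure-power generators are needed and how they must be positioned. A single mixed generator $x^{\alpha} y^{\beta}$ (with $0<\alpha<a$, $0<\beta<b$) produces case (i); I would verify by direct inspection of the staircase that this yields exactly two outer corners, located at the monomials realizing socle degrees $a+\beta+c-3$ and $\alpha+b+c-3$ (subtracting $3 = \dim R$ from the top twists $t_i$). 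If instead two mixed generators are needed, I would show they must share a common exponent in one variable to avoid creating a third corner; normalizing so the shared variable is $x$ gives the generators $x^{\alpha} y^{\beta}$ and $x^{\alpha} z^{\gamma}$ of case (ii), again with the stated socle degrees. Finally, the level criterion in each case is immediate by setting $s = t$: equating $a+\beta+c-3 = \alpha+b+c-3$ gives $a-\alpha = b-\beta$, and equating $a+\beta+\gamma-3 = \alpha+b+c-3$ gives $a-\alpha = b-\beta+c-\gamma$.

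The main obstacle I expect is the exhaustiveness of the case split: showing that \emph{no other} configuration of minimal generators yields exactly two socle generators. Concretely, I must rule out (up to relabeling variables) both the possibility of three or more mixed generators and the possibility of two mixed generators sharing no common coordinate, by demonstrating that each such configuration forces at least three outer staircase corners, hence type at least three. This is the genuinely combinatorial heart of the argument and is where I would lean on the puncture/overlap language of Section~\ref{sec:tri}: two punctures that neither overlap nor share a coordinate each contribute their own isolated corner, whereas forcing a shared coordinate $\alpha$ merges what would be two separate contributions into a controlled pair. Since the paper states the level case was already handled in \cite[Proposition~6.1]{BMMNZ} ``similarly,'' I would model the corner-counting bookkeeping on that proof and simply drop the level hypothesis, tracking the two possibly-distinct socle degrees $s$ and $t$ throughout rather than assuming $s=t$.
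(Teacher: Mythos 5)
Your proposed route runs in the opposite direction from the paper's, and the step you yourself flag as ``the genuinely combinatorial heart of the argument'' is exactly the step you have not supplied. You start from an arbitrary Artinian monomial ideal and try to show that the requirement of exactly two socle generators (outer staircase corners) forces one of the two listed generator configurations. The exhaustiveness of that case split --- ruling out three or more mixed generators, and ruling out two mixed generators with no shared exponent --- is deferred to a plan (``I would show they must share a common exponent\dots'', ``I must rule out\dots''). In three variables there is no simple formula relating the number of minimal generators to the number of socle generators (unlike the codimension-two case), so counting inner corners of a general staircase and showing every non-listed configuration produces at least three of them is real work, not bookkeeping; as written, the proposal asserts the conclusion of that analysis rather than proving it. The appeal to Eliahou--Kervaire is also misplaced, since that machinery applies to stable ideals, not arbitrary monomial ideals; the correct (and elementary) description is that socle generators correspond to monomials $m \notin I$ with $xm, ym, zm \in I$, but you still have to run the classification.

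The paper avoids this entirely by arguing in the dual direction via Macaulay--Matlis duality: a type-two Artinian monomial algebra is the inverse system of exactly two monomials $x^{a_1}y^{b_1}z^{c_1}$ and $x^{a_2}y^{b_2}z^{c_2}$, neither dividing the other. After normalizing so that $a_1 > a_2$ and $b_1 < b_2$, there are only two cases ($c_1 = c_2$ or, say, $c_1 < c_2$), and in each case $I$ is the explicitly computed intersection of the two irreducible ideals $(x^{a_i+1}, y^{b_i+1}, z^{c_i+1})$. Exhaustiveness is then automatic --- there is nothing to rule out --- and the socle degrees are read off immediately as the degrees of the two dual monomials, rather than being extracted from the tail of a resolution. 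If you want to salvage your forward approach, you would need to actually carry out the corner count for an arbitrary staircase; the faster fix is to adopt the duality argument, which reduces the whole lemma to two short ideal intersections.
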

\begin{proof}
    We use Macaulay-Matlis duality. An Artinian monomial algebra of type two over $R$ arises as the inverse system of
    two monomials, say $x^{a_1} y^{b_1} z^{c_1}$ and $x^{a_2} y^{b_2} z^{c_2}$, such that one does not divide the other.
    Thus we may assume without loss of generality that $a_1 > a_2$ and $b_1 < b_2$. We consider two cases: $c_1 = c_2$
    and $c_1 \neq c_2$.

    Suppose first that $c_1 = c_2$.  Then the annihilator of the monomials is the ideal
    \[
        (x^{a_1+1}, y^{b_1+1}, z^{c_1+1}) \cap (x^{a_2+1}, y^{b_2+1}, z^{c_1+1}) =   (x^{a_1+1}, y^{b_2+1}, z^{c_1+1}, x^{a_2+1} y^{b_1+1}),
    \]
    which is the form in (i).  By construction, the socle elements are $x^{a_1} y^{b_1} z^{c_1}$ and $x^{a_2} y^{b_2} z^{c_1}$.

    Now suppose $c_1 \neq c_2$; without loss of generality we may assume $c_1 < c_2$. Then the annihilator of the
    monomials is the ideal
    \[
        (x^{a_1+1}, y^{b_1+1}, z^{c_1+1}) \cap (x^{a_2+1}, y^{b_2+1}, z^{c_2+1}) =
        (x^{a_1 + 1}, y^{b_2+1}, z^{c_2+1}, x^{a_2 + 1} y^{b_1+1}, x^{a_2+1} z^{c_1+1}),
    \]
    which is the form in (ii).  By construction, the socle elements are $x^{a_1} y^{b_1} z^{c_1}$ and $x^{a_2} y^{b_2} z^{c_2}$.
\end{proof}

We now give a complete classification of the type two algebras that have the weak Lefschetz property in characteristic zero.

\begin{theorem} \label{thm:type-two}
    Let $I$ be an Artinian monomial ideal in $R = K[x,y,z]$, where $K$ is a field of characteristic zero, such that
    $R/I$ is of type $2$. Then $R/I$ fails to have the weak Lefschetz property in characteristic zero if and only if
    $I = (x^a, y^b, z^c, x^{\alpha} y^{\beta}, x^{\alpha} z^{\gamma})$, up to a change of variables, where
    $0 < \alpha < a$, $0 < \beta < b$, and $0 < \gamma < c$, and there exists an integer $d$ with
    \begin{equation} \label{eqn:type-two}
        \begin{split}
                \max \left\{a, \alpha + \beta, \alpha + \gamma, \frac{a+\alpha+\beta+\gamma}{2} \right\} < d \hspace*{7cm}\\
                < \min \left\{a+\beta + \gamma, \frac{\alpha+b+c}{2}, b+c, \alpha + c, \alpha + b \right\}.
        \end{split}
    \end{equation}
\end{theorem}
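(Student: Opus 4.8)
The plan is to marry the classification of Lemma~\ref{lem:classify-type-two} to the peak criteria of Section~\ref{sec:wlp} and the explicit enumerations of Section~\ref{sec:enumeration}. Since $\charf K = 0$, Proposition~\ref{pro:mono} reduces everything to testing the single linear form $\ell = x+y+z$, and Corollary~\ref{cor:wlp-Z} (together with Proposition~\ref{pro:wlp} and Corollary~\ref{cor:twin-peaks-wlp}) shows that the weak Lefschetz property is decided by whether finitely many integer matrices $Z(T_d(I))$ have maximal rank over $\mathbb{Q}$. By Lemma~\ref{lem:classify-type-two} I may assume $I$ has form (i) or form (ii); the first step is to show that form (i) always has the property, so that every failure comes from form (ii).

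For $I = (x^a, y^b, z^c, x^\alpha y^\beta)$ the extra puncture sits on the upper-left boundary edge (its $z$-exponent is $0$), so $T_d(I)$ and each of its restricted maximal minors are simply connected: they have no floating punctures, and Proposition~\ref{pro:same-sign} makes every tiling carry the same sign. Hence the relevant signed enumeration equals the nonnegative tiling count, which at the peak degree is evaluated either by Proposition~\ref{pro:two-mahonian} (in the balanced or twin-peak case) or by the restricted-maximal-minor computation already performed in the proof of Theorem~\ref{thm:type-one}(iii). In either case it is a product of Mahonian and hyperfactorial factors, hence a positive integer, hence nonzero in characteristic zero. Thus $\times \ell$ has maximal rank in every degree and $R/I$ has the weak Lefschetz property.

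It remains to treat form (ii), $I = (x^a, y^b, z^c, x^\alpha y^\beta, x^\alpha z^\gamma)$, with socle degrees $s_1 = a+\beta+\gamma-3$ and $s_2 = \alpha + b+c-3$. The engine of the proof is the monomial subregion $U$ of $T_d(I)$ associated to $x^\alpha$: its triangles are exactly the multiples of $x^\alpha$ surviving in degrees $d-2$ and $d-1$, namely the box $\{\alpha \le i < a,\ j < \beta,\ k < \gamma\}$, and in local coordinates $U$ is the hexagon $T_{d-\alpha}(x^{a-\alpha}, y^\beta, z^\gamma)$. A direct count of its upward- and downward-pointing triangles gives that $U$ is $\dntri$-heavy precisely when $d > \frac{1}{2}(a+\alpha+\beta+\gamma)$. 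Since $\times \ell$ carries the $x^\alpha$-multiples of degree $d-2$ into the $x^\alpha$-multiples of degree $d-1$, a $\dntri$-heavy $U$ forces $\times\ell \colon [R/I]_{d-2} \to [R/I]_{d-1}$ to have nontrivial kernel. I would then read off the remaining bounds of \eqref{eqn:type-two}: $d > \alpha+\beta$ and $d > \alpha + \gamma$ guarantee that the two defining punctures of $U$ are present; $d < a+\beta+\gamma$ yields $s_1 \ge d-2$ and $d < \frac{1}{2}(\alpha+b+c) < \alpha+b+c$ yields $s_2 \ge d-2$, so the peak criterion Corollary~\ref{cor:wlp-Z} applies; and $d < \frac{1}{2}(\alpha+b+c)$ together with the remaining upper bounds keeps the Hilbert function non-decreasing into degree $d-1$ (so that maximal rank there means injectivity) and the outer punctures $y^b, z^c$ in hexagonal position. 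Consequently, whenever an integer $d$ satisfies \eqref{eqn:type-two}, injectivity fails and $R/I$ does not have the weak Lefschetz property.

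For the converse I would assume \eqref{eqn:type-two} has no integer solution and prove maximal rank in every degree. The negation splits into cases according to which bound is binding; in each I would locate the peak and exhibit a tileable region, or a tileable restricted maximal minor, whose signed enumeration is delivered by Corollary~\ref{cor:ci-nest} or Proposition~\ref{pro:two-mahonian} as a positive product of Mahonian terms, so that $Z(T_d(I))$ has maximal rank by Corollary~\ref{cor:max-Z} and Theorem~\ref{thm:pm-det}. The main obstacle is exactly this converse bookkeeping: one must verify that the $x^\alpha$-subregion is the \emph{only} possible $\dntri$-heavy obstruction, match each of the nine bounds in \eqref{eqn:type-two} to a precise geometric constraint on the punctures, and handle the degenerate boundary cases (punctures that merely touch, and the parities of $a+\alpha+\beta+\gamma$ and $\alpha+b+c$ deciding twin versus single peaks) in which the tileable witness must be chosen with care.
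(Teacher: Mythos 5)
Your skeleton is right---reduce to the rank of $Z(T_d(I))$, blame the monomial subregion associated to $x^\alpha$, and translate its imbalance via Lemma~\ref{lem:h-ci} into the bounds of \eqref{eqn:type-two}---and your forward direction is essentially sound, but it rests on a false claim. Condition~\eqref{eqn:type-two} does \emph{not} keep the Hilbert function non-decreasing into degree $d-1$: for $J=(x^3,y^7,z^7,xy^2,xz^2)$ and $d=5$ the condition holds, yet $h_{R/J}(3)=7>6=h_{R/J}(4)$, so maximal rank there means surjectivity and a nontrivial kernel alone proves nothing. The argument can be repaired by counting: if $t_u>0$ is the excess of downward triangles in the $x^\alpha$-subregion and $t_l>0$ the excess of upward triangles in the complement, then $\dim\ker(\times\ell)\geq t_u$ while maximal rank forces $\dim\ker(\times\ell)=\max\{0,\,t_u-t_l\}<t_u$; but as written the step is wrong. (The paper avoids this entirely by showing every balanced subregion corresponding to a maximal minor is non-tileable, so all maximal minors of $Z(T_d(I))$ vanish.)

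The genuine gap is the converse, which you explicitly defer to ``bookkeeping.'' The regions that arise when no $d$ satisfies \eqref{eqn:type-two} are not covered by Corollary~\ref{cor:ci-nest} or Proposition~\ref{pro:two-mahonian}: they have up to five punctures which may overlap, the region at the peak need not be balanced, and its restricted maximal minors are not among the evaluated families (your appeal to the computation in Theorem~\ref{thm:type-one}(iii) for form (i) fails for the same reason---that computation concerns a hexagon with one extra side-length-one boundary puncture, not the region $T_d(x^a,y^b,z^c,x^\alpha y^\beta)$ with two extra triangles removed). The paper's key device, which your proposal lacks, is that splitting $T_d(I)$ along the horizontal line at height $\alpha$ makes $Z(T_d(I))$ block triangular with diagonal blocks $Z(T^u)$ and $Z(T^l)$, both bi-adjacency matrices of complete-intersection regions and hence of maximal rank in characteristic zero by Theorem~\ref{thm:type-one}; whenever $T^u$ and $T^l$ do not favor opposite orientations, nonvanishing maximal minors of the two blocks combine to a nonvanishing maximal minor of $Z(T_d(I))$, with no enumeration needed. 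The one remaining case ($T^u$ $\uptri$-heavy, $T^l$ $\dntri$-heavy) requires an explicit construction: trim uniquely tileable strips to balance the two portions, place the crossing lozenges by hand, and invoke Proposition~\ref{pro:same-sign} on the resulting tileable region with no floating punctures. Without this decomposition, or a substitute for it, the ``only if'' half of the theorem is not established.
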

\begin{proof}
    According to Corollary \ref{cor:wlp-biadj}, for each integer $d > 0$, we have to decide whether the bi-adjacency
    matrix $Z(T_d(I))$ has maximal rank. This is always true if $d = 1$. Let $d \geq 2$.

    By Lemma~\ref{lem:classify-type-two}, we may assume that $I$ has one of two forms given there. The difference
    between the two forms is an extra generator, $x^{\alpha} z^{\gamma}$. In order to determine the rank of $Z( T_d(I))$
    we split $T = T_d(I)$ across the horizontal line $\alpha$ units from the bottom edge. We call the monomial
    subregion above the line, which is the subregion associated to $x^\alpha$, the \emph{upper portion} of $T$, denoted
    by $T^u$, and we call the isosceles trapezoid below the line the \emph{lower portion} of $T$, denoted by $T^l$. Note
    that $T^u$ is empty if $d \leq \alpha$. Both portions, $T^u$ and $T^l$, are hexagons, i.e., triangular regions
    associated to complete intersections. In particular, if $I$ has four generators, then
    $T^u = T_{d-\alpha}(x^{a-\alpha}, y^{\beta}, z^c)$. Similarly, if $I$ has five generators, then
    $T^u = T_{d-\alpha}(x^{a-\alpha}, y^{\beta}, z^{\gamma})$. In both cases $T^l$ is $T_d(x^{\alpha}, y^b, z^c)$. See
    Figure~\ref{fig:decompose} for an illustration of this decomposition.
    \begin{figure}[!ht]
        \includegraphics[scale=2]{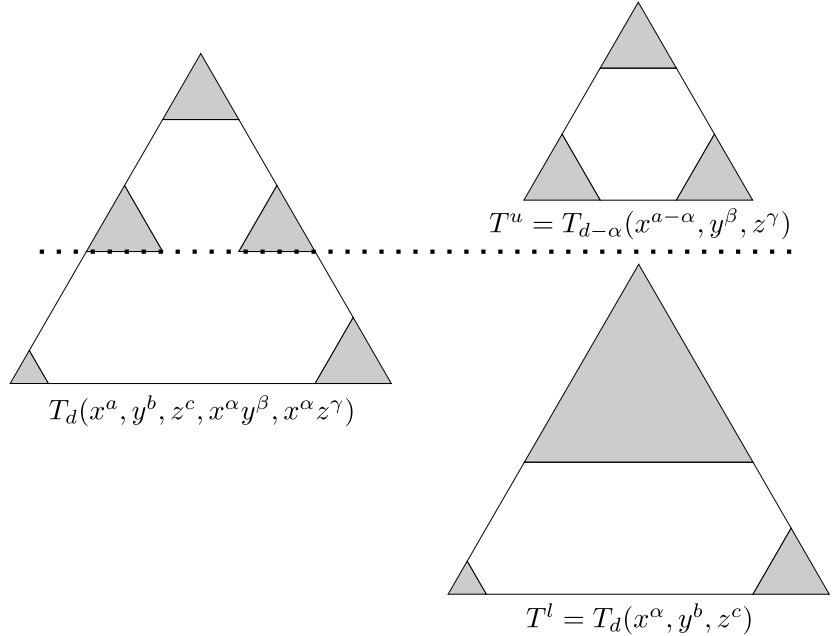}
        \caption{The decomposition of $T_d(I)$ into $T^u$ and $T^l$.}
        \label{fig:decompose}
    \end{figure}

    After reordering rows and columns of the bi-adjacency matrix  $Z(T)$, it becomes a  block matrix of the form
    \begin{equation}\label{eq:block-Z}
        Z =
        \begin{pmatrix}
            Z(T^u) & 0 \\
            Y & Z(T^l)
        \end{pmatrix}
    \end{equation}
    because the downward-pointing triangles in $T^u$ are not adjacent to any upward-pointing triangles in $T^l$. For
    determining when $Z$ has maximal rank, we study several cases, depending on whether $T^u$ and $T^l$ are $\uptri$-heavy,
    balanced, or $\dntri$-heavy.
    \smallskip

    First, suppose one of the following conditions is satisfied: (i) $T^u$ or $T^l$ is balanced, (ii) $T^u$ and $T^l$
    are both $\uptri$-heavy, or (iii) $T^u$ and $T^l$ are both $\dntri$-heavy. In other words, $T^u$ and $T^l$ do not
    ``favor'' triangles of opposite orientations. Since $T^u$ and $T^l$ are triangular regions associated to complete
    intersections, both $Z(T^u)$ and $Z(T^l)$ have maximal rank by  Theorem~\ref{thm:type-one}. Combining non-vanishing
    maximal minors of $Z(T^u)$ and $Z(T^l)$, if follows that the matrix $Z$ has maximal rank as well.
    \smallskip

    Second, suppose $T^u$ is $\uptri$-heavy and $T^l$ is $\dntri$-heavy. We will show that $Z$ has maximal rank in this case.

    Let $t_u = \#\uptri(T^u) - \#\dntri(T^u)$ and $t_l = \#\dntri(T^l) - \#\uptri(T^l)$ be the number of excess
    triangles of each region. In a first step, we show that we may assume $t_u = t_l$. To this end we remove enough of
    the appropriately oriented triangles from the more unbalanced of $T^u$ and $T^l$ until both regions are equally
    unbalanced. Set $t = \min\{t_u, t_l\}$.

    Assume $T^u$ is more unbalanced, i.e., $t_u > t$. Since $T^u$ is $\uptri$-heavy, the top $t_u$ rows of ${\mathcal
    T}_d$ below the puncture associated to $x^a$ do not have a puncture. Thus, we can remove the top $t_u - t$
    upward-pointing triangles in $T^u$ along the upper-left edge of ${\mathcal T}_d$, starting at the puncture
    associated to $x^a$, if present, or in the top corner otherwise. Denote the resulting subregion of $T$ by $T'$.
    Notice that $Z$ has maximal rank if $Z(T')$ has maximal rank. Furthermore, the $t_u - t$ rows in which $T$ and $T'$
    differ are uniquely tileable. Denote this subregion of $T'$ by $U$ (see Figure~\ref{fig:type-2-case-8}(i) for an
    illustration). By construction, the upper and the lower portion ${T^u}'$ and ${T^l}' = T^l$, respectively, of $T'
    \setminus U$ are equally unbalanced. Moreover, $Z(T')$ has maximal rank if and only if $Z(T' \setminus U)$ has
    maximal rank by Proposition~\ref{pro:remove-unique-tileable}. As desired, $T$ and $T' \setminus U$ have the same shape.

    Assume now that $T^l$ is more unbalanced, i.e., $t_l > t$. Since $T^l$ is $\dntri$-heavy, the two punctures
    associated to $x^b$ and $x^c$, respectively, cover part of the bottom $t_l$ rows of ${\mathcal T}_d$. Thus, we can
    remove the bottom $t_l - t$ downward-pointing triangles of $T^l$ along the puncture associated to $x^c$. Denote the
    resulting subregion of $T$ by $T'$. Notice that $Z$ has maximal rank if $Z(T')$ has maximal rank. Again, the $t_l -
    t$ rows in which $T$ and $T'$ differ form a uniquely tileable subregion. Denote it by $U$. By construction, the
    upper and the lower portion ${T^u}' = T^u$ and ${T^l}' $, respectively, of $T' \setminus U$ are equally
    unbalanced. Moreover, $Z(T')$ has maximal rank if and only if $Z(T' \setminus U)$ has maximal rank by
    Proposition~\ref{pro:remove-unique-tileable}. As before, $T$ and $T' \setminus U$ have the same shape.

    The above discussion shows it is enough to prove that the matrix $Z$ has maximal rank if $t_u = t_l = t$, i.e., $T$
    is balanced. Since $T$ has no floating punctures, Proposition~\ref{pro:same-sign} gives the desired maximal rank of
    $Z$ once we know that $T$ has a tiling. To see that $T$ is tileable, we first place $t$ lozenges across the line
    separating $T^u$ from $T^l$, starting with the left-most such lozenge.  Indeed, this is possible since $T^u$ has $t$
    more upwards-pointing than downwards-pointing triangles. Next, place all fixed lozenges. The portion of ${T^u}$ that
    remains untiled after placing these lozenges is a hexagon. Hence it is tileable.
    (See Figure~\ref{fig:type-2-case-8}(ii) for an illustration.)  
    
    \begin{figure}[!ht]
        \begin{minipage}[b]{0.48\linewidth}
            \centering
            \includegraphics[scale=1]{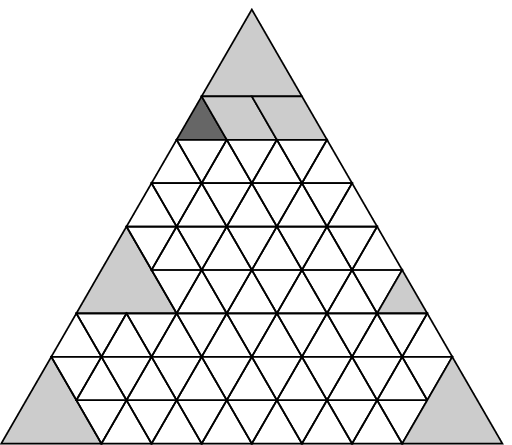}\\
            \emph{(i) A maximal minor of $T$; the removed triangle is darkly shaded.}
        \end{minipage}
        \begin{minipage}[b]{0.48\linewidth}
            \centering
            \includegraphics[scale=1]{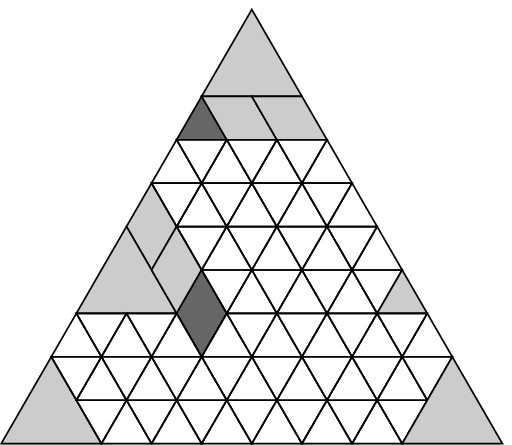}\\
            \emph{(ii) Placing a lozenge on the maximal minor to produce a tiling.}
        \end{minipage}
        \caption{Let $T = T_{10}(x^8, y^8, z^8, x^3 y^5, x^3 z^6)$.
            The lightly-shaded lozenges are fixed lozenges.}
        \label{fig:type-2-case-8}
    \end{figure}
    
    Consider now the portion of ${T^l}$ that remains untiled after placing these lozenges. Since $t$ is at most the number of 
    horizontal rows of $T^l$ this portion is, after a $60^{\circ}$ rotation, a region as described in Proposition~\ref{pro:two-mahonian}.
    Thus it is tileable. Figure~\ref{fig:type-2-case-8-overlay} illustrates this procedure with an example.

    \begin{figure}[!ht]
        \includegraphics[scale=1]{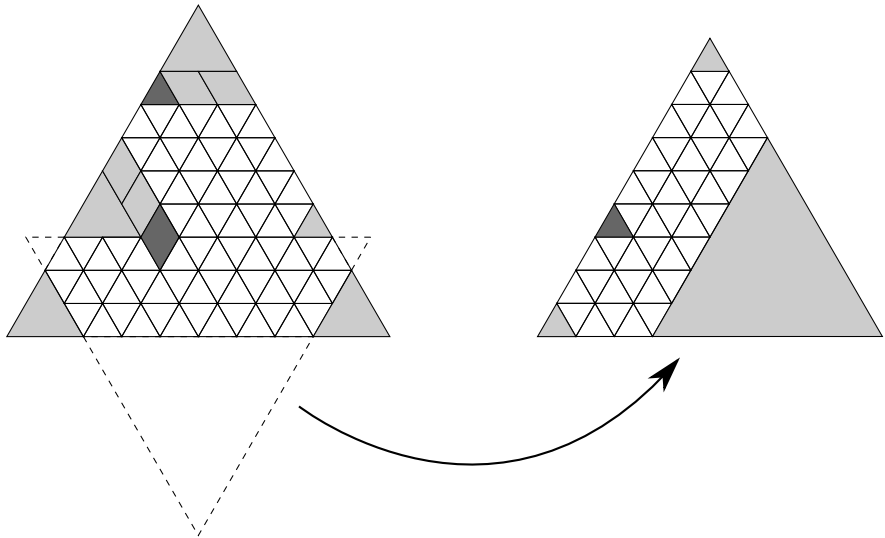}
        \caption{With $T$ as in Figure~\ref{fig:type-2-case-8}, 
            after a $60^{\circ}$ rotation $T^l$ becomes a previously described region.}
        \label{fig:type-2-case-8-overlay}
    \end{figure}

    It follows that $T$ is tileable. Therefore $Z$ has maximal rank, as desired.
    \smallskip

    Finally, suppose $T^u$ is $\dntri$-heavy and $T^l$ is $\uptri$-heavy. Consider any maximal minor of $Z(T)$. It
    corresponds to a balanced subregion $T'$ of $T$. Then its upper portion ${T^u}'$ is still $\dntri$-heavy, and its
    lower portion ${T^l}'$ is $\uptri$-heavy. Hence, any covering of ${T^u}'$ by lozenges must also cover some
    upward-pointing triangles of ${T^l}'$. The remaining part of ${T^l}'$ is even more unbalanced than ${T^l}'$. This
    shows that $T'$ is not tileable. Thus, $\det Z(T') = 0$ by Theorem \ref{thm:pm-det}. It follows that $Z$ does not
    have maximal rank in this case.
    \smallskip

    The above case analysis proves that $R/I$ fails the weak Lefschetz property if and only if there is an integer $d$
    so that the associated regions $T^u$ and $T^l$ are $\dntri$-heavy and $\uptri$-heavy, respectively. It remains to
    determine when this happens.

    If $I$ has only four generators, then no row of $T^u$ has more downward-pointing than upward-pointing triangles.
    Hence, $T^u$ is not $\dntri$-heavy. It follows that $I$ must have five generators if $R/I$ fails to have the weak
    Lefschetz property. For such an ideal $I$, the region $T^u = T_{d-\alpha}(x^{a-\alpha}, y^{\beta}, z^{\gamma})$ is
    $\dntri$-heavy if and only if
    \[
        \dim_K [R/(x^{a-\alpha}, y^{\beta}, z^{\gamma})]_{d-\alpha-2} > \dim_K [R/(x^{a-\alpha}, y^{\beta}, z^{\gamma})]_{d-\alpha-1},
    \]
    and $T^l = T_d(x^{\alpha}, y^b, z^c)$ is $\uptri$-heavy if and only if
    \[
        \dim_K [R/(x^{\alpha}, y^b, z^c)]_{d-2} < \dim_K [R/(x^{\alpha}, y^b, z^c)]_{d-1}.
    \]
    Using Lemma~\ref{lem:h-ci}, a straight-forward computation shows that these two inequalities are both true if and
    only of $d$ satisfies Condition~\eqref{eqn:type-two}.
\end{proof}

\begin{remark}\label{rem:type2}
    The above argument establishes the following more precise version of Theorem~\ref{thm:type-two}:

    Let $R/I$ be a Artinian monomial algebra of type 2, where $K$ is a field of characteristic zero, and let $\ell \in
    R$ be a general linear form. Then the multiplication map $\times \ell: [R/I]_{d-2} \to [R/I]_{d-1}$ does not have
    maximal rank if and only if $I = (x^a, y^b, z^c, x^{\alpha} y^{\beta}, x^{\alpha} z^{\gamma})$, up to a change of
    variables, and $d$ satisfies Condition~\eqref{eqn:type-two}.
\end{remark}

Condition~\eqref{eqn:type-two} in Theorem~\ref{thm:type-two} is indeed non-vacuous.

\begin{example}
    We provide three examples, the latter two come from \cite[Example~6.10]{BMMNZ}, with various shapes of Hilbert functions.
    \begin{enumerate}
        \item Let $I = (x^4, y^4, z^4, x^3y, x^3z)$. Then $d = 5$ satisfies Condition~\eqref{eqn:type-two}.
            Moreover, $T_d(I)$ is a balanced region, and $R/I$ has a strictly unimodal Hilbert function, \\
            $(1,3,6,10,10,9,6,3,1)$.
        \item Let $J = (x^3, y^7, z^7, xy^2, xz^2)$. Then Condition~\eqref{eqn:type-two} is satisfied if and only if $d=5$ or  $d = 6$.
            Note that $R/J$ has a non-unimodal Hilbert function, \\
            $(1,3,6,7,6,6,7,6,5,4,3,2,1)$.
        \item Let $J' = (x^2, y^4, z^4, xy, xz)$. Then $d = 3$ satisfies Condition~\eqref{eqn:type-two}.
            Moreover, $J'$ has a \emph{non-strict} unimodal Hilbert function $(1,3,3,4,3,2,1)$.
    \end{enumerate}
\end{example}

Using Theorem \ref{thm:type-two}, we easily recover \cite[Theorem~6.2]{BMMNZ}, one of the main results in the recent
memoir~\cite{BMMNZ}.

\begin{corollary} \label{cor:type-two-level}
    Let $R/I$ be a Artinian monomial algebra of type 2 over a field of characteristic zero. Then $R/I$ has the weak
    Lefschetz property.
\end{corollary}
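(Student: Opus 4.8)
The plan is to deduce this directly from the complete classification established in Theorem~\ref{thm:type-two}. That theorem identifies the failure of the weak Lefschetz property in characteristic zero with a very specific numerical situation: $R/I$ fails the property exactly when $I$ has the five-generator form $(x^a, y^b, z^c, x^{\alpha} y^{\beta}, x^{\alpha} z^{\gamma})$ (up to a change of variables) and there exists an integer $d$ lying strictly between the two bounds of Condition~\eqref{eqn:type-two}. So for a \emph{level} type two algebra it suffices to rule out the existence of any such $d$, i.e.\ to show that the open interval in Condition~\eqref{eqn:type-two} is empty under the level hypothesis.

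First I would dispose of the four-generator case. If $I = (x^a, y^b, z^c, x^{\alpha} y^{\beta})$ as in Lemma~\ref{lem:classify-type-two}(i), then Theorem~\ref{thm:type-two} already guarantees that $R/I$ has the weak Lefschetz property, since a failure in characteristic zero requires the extra generator $x^{\alpha} z^{\gamma}$. Thus we may assume $I = (x^a, y^b, z^c, x^{\alpha} y^{\beta}, x^{\alpha} z^{\gamma})$ with $0 < \alpha < a$, $0 < \beta < b$, and $0 < \gamma < c$, and invoke the level assumption.

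Next I would translate levelness into a single numerical identity and play it off against Condition~\eqref{eqn:type-two}. By Lemma~\ref{lem:classify-type-two}(ii) the socle degrees are $a + \beta + \gamma - 3$ and $\alpha + b + c - 3$, so $R/I$ is level precisely when $a + \beta + \gamma = \alpha + b + c$; write $s$ for this common value. The key comparison is between one term of the lower bound and one term of the upper bound: the term $\tfrac{a+\alpha+\beta+\gamma}{2}$ occurring in the maximum equals $\tfrac{\alpha + s}{2}$, while the term $\tfrac{\alpha+b+c}{2}$ occurring in the minimum equals $\tfrac{s}{2}$. Since $\alpha > 0$, the lower bound is at least $\tfrac{\alpha+s}{2} > \tfrac{s}{2}$, and the upper bound is at most $\tfrac{s}{2}$. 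Hence the lower bound strictly exceeds the upper bound, the interval in Condition~\eqref{eqn:type-two} contains no integer (indeed no real number), and so by Theorem~\ref{thm:type-two} the algebra $R/I$ has the weak Lefschetz property.

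I do not expect any genuine obstacle here: once Theorem~\ref{thm:type-two} is available, the entire content reduces to the bookkeeping observation that the level equality $a+\beta+\gamma = \alpha+b+c$ forces the ``half-sum'' lower bound to exceed the ``half-sum'' upper bound by exactly $\alpha/2 > 0$. The only point requiring a moment's care is confirming that $\tfrac{a+\alpha+\beta+\gamma}{2}$ and $\tfrac{\alpha+b+c}{2}$ are indeed members of the relevant maximum and minimum, respectively, so that the crossing of this single pair already empties the interval irrespective of the other four terms on each side.
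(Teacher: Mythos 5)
Your proposal is correct and takes essentially the same route as the paper: both reduce the claim to showing that the level identity $a+\beta+\gamma=\alpha+b+c$ empties the interval in Condition~\eqref{eqn:type-two}, the paper comparing the term $\tfrac{a+\alpha+\beta+\gamma}{2}$ against $\alpha+\min\{b,c\}$ while you compare it against $\tfrac{\alpha+b+c}{2}$ --- an equivalent piece of bookkeeping --- and both dispose of the four-generator case by citing Theorem~\ref{thm:type-two} directly. (You correctly supplied the level hypothesis, which the paper's proof uses even though the word ``level'' is missing from the corollary's statement as printed.)
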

\begin{proof}
    By Theorem~\ref{thm:type-two}, we know that if $I$ has four generators, then $R/I$ has the weak Lefschetz property.
    If $I$ has five generators, then it suffices to show that Condition~\eqref{eqn:type-two} is vacuous in this case.
    Indeed, since $R/I$ is level, we have that $a - \alpha = b - \beta + c - \gamma$ by
    Lemma~\ref{lem:classify-type-two}. This implies
    \[
        \frac{a+\alpha+\beta+\gamma}{2} = \frac{2 \alpha+b+c}{2} \geq \alpha + \min\{b, c\}.
    \]
    Hence, no integer  $d$ satisfies Condition~\eqref{eqn:type-two}.
\end{proof}

Moreover, in most of the cases when the weak Lefschetz property holds in characteristic zero, we can give a linear lower
bound on the characteristics for which the weak Lefschetz property must hold. 

\begin{corollary}\label{cor:type-two-pos-char}
    Let $R/I$ be a Artinian monomial algebra of type 2. Suppose that $R/I$ has the weak Lefschetz property in
    characteristic zero and that there is no integer $d$ such that
    \begin{equation}\label{eq:Cond-on-d}
        \max\left\{ \alpha, b, c, \frac{\alpha + b + c}{2} \right\}
        < d <
        \min\left\{ a + \beta, a + \gamma, \alpha + \beta + c, \frac{a + \alpha + \beta + c}{2} \right\}.
    \end{equation}
    Then $R/I$ has the weak Lefschetz property, provided $K$ has  characteristic $p \geq \flfr{\alpha + b + c}{2}$.
\end{corollary}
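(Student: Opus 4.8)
The plan is to test the single form $\ell=x+y+z$ and invoke Corollary~\ref{cor:wlp-biadj}: since $K$ is infinite, $R/I$ has the weak Lefschetz property in characteristic $p$ if and only if $Z(T_d(I))$ has maximal rank over $K$ for every $d\ge 1$ (only finitely many $d$ being nontrivial, as $R/I$ is Artinian). I would then reuse the decomposition from the proof of Theorem~\ref{thm:type-two}, splitting $T=T_d(I)$ along the horizontal line $\alpha$ units above the bottom edge so that $Z$ becomes the block lower-triangular matrix \eqref{eq:block-Z} with diagonal blocks $Z(T^u)$ and $Z(T^l)$, where $T^l=T_d(x^\alpha,y^b,z^c)$ and $T^u=T_{d-\alpha}(x^{a-\alpha},y^\beta,z^\gamma)$ (read $\gamma=c$ when $I$ has four generators). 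Because the off-diagonal block $Y$ occupies the lower-left corner, a short linear-algebra check over each of the four non-opposite-favoring heaviness configurations shows that $Z$ has maximal rank as soon as \emph{both} $Z(T^u)$ and $Z(T^l)$ do; so everything reduces to controlling the two diagonal blocks, each a bi-adjacency matrix of a complete intersection region.

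The stated bound enters through the lower block. Since $p\ge \flfr{\alpha+b+c}{2}$, the final clause of Theorem~\ref{thm:type-one} applies to the complete intersection $R/(x^\alpha,y^b,z^c)$ and shows it has the weak Lefschetz property in characteristic $p$; hence $Z(T^l)=Z(T_d(x^\alpha,y^b,z^c))$ has maximal rank in characteristic $p$ for \emph{every} $d$. This is precisely why the threshold is phrased through $\alpha+b+c$: the lower portion of $T_d(I)$ is the complete intersection region of $(x^\alpha,y^b,z^c)$ in each degree.

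It then remains to dispose of the two configurations the block reduction does not cover and to bound the upper block. The configuration ``$T^u$ is $\dntri$-heavy while $T^l$ is $\uptri$-heavy'' is ruled out by the hypothesis that $R/I$ has the weak Lefschetz property in characteristic zero: by Remark~\ref{rem:type2} this is exactly the obstruction measured by Condition~\eqref{eqn:type-two}, which therefore holds for no $d$. The second hypothesis handles the remaining danger. Translating ``$T^l$ is $\dntri$-heavy while $T^u$ is not $\dntri$-heavy'' through the Hilbert-function description of Lemma~\ref{lem:h-ci} produces exactly the inequalities of Condition~\eqref{eq:Cond-on-d}; so the assumption that no $d$ satisfies \eqref{eq:Cond-on-d} forces that, at every degree where $T^l$ is $\dntri$-heavy, the upper portion $T^u$ is $\dntri$-heavy as well. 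At every surviving degree one verifies, again with Lemma~\ref{lem:h-ci}, that whenever $\det Z(T^u)$ is actually needed the upper half-sum $\tfrac12\bigl((a-\alpha)+\beta+\gamma\bigr)$ does not exceed $\tfrac12(\alpha+b+c)$, so that—using the prime-divisor bounds of Proposition~\ref{pro:ci-enum} together with the factorization $|\det Z(T)|=|\det Z(T^u)\cdot\det Z(T^l)|$ of Proposition~\ref{pro:rep-enum} on balanced regions—every relevant enumeration has all prime divisors at most $\flfr{\alpha+b+c}{2}-1<p$. Hence $Z(T^u)$, and therefore $Z(T_d(I))$, has maximal rank in characteristic $p$ for all $d$, giving the weak Lefschetz property.

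The main obstacle is the bookkeeping in this last step: one must confirm that Condition~\eqref{eq:Cond-on-d} captures \emph{precisely} the degrees at which the upper portion could contribute a prime divisor exceeding $\flfr{\alpha+b+c}{2}-1$, and that off these degrees (and off Condition~\eqref{eqn:type-two}) each configuration is either uniquely tileable or has $\det Z(T^u)$ with prime divisors bounded by the lower half-sum. The delicate point is the degrees where $T^u$ is balanced, at which $d\approx \alpha+\tfrac12\bigl((a-\alpha)+\beta+\gamma\bigr)$: here I would show that such a $d$ either forces $T^l$ to be $\dntri$-heavy—whence it is excluded by \eqref{eq:Cond-on-d}—or else satisfies $d\le \tfrac12(\alpha+b+c)$, which keeps the upper half-sum small. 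Carrying out these comparisons via Lemma~\ref{lem:h-ci} is routine, but it is exactly the place where all three hypotheses must be used in concert.
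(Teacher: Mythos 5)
Your proposal follows the paper's proof essentially step for step: the same block decomposition \eqref{eq:block-Z} of $Z(T_d(I))$ into $Z(T^u)$ and $Z(T^l)$, the same identification of the two hypotheses (failure of Condition~\eqref{eqn:type-two} via the characteristic-zero assumption, and failure of Condition~\eqref{eq:Cond-on-d} by hypothesis) with the exclusion of the two ``opposite-favoring'' heaviness configurations, and the same appeal to Theorem~\ref{thm:type-one} to get maximal rank of both diagonal blocks once $p \geq \flfr{\alpha+b+c}{2}$. The additional bookkeeping you flag for the upper block --- verifying that its own half-sum threshold is dominated by $\flfr{\alpha+b+c}{2}$ at the degrees where it matters --- is a point the paper's proof passes over silently, so your sketch is, if anything, slightly more cautious than the published argument while taking the identical route.
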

\begin{proof}
    We use the notation introduced in the proof of Theorem~\ref{thm:type-two}. Fix any integer $d \ge2$. Recall that,
    possibly after reordering rows and columns, the bi-adjacency matrix of $T = T_d (I)$ has the form (see
    Equation~\eqref{eq:block-Z})
    \[
        Z =
        \begin{pmatrix}
            Z(T^u) & 0 \\
            Y & Z(T^l)
        \end{pmatrix}.
    \]
    By assumption, $d$ does not satisfy Condition~\eqref{eqn:type-two} nor \eqref{eq:Cond-on-d}. This implies that $T$
    has one of the following properties: (i) $T^u$ or $T^l$ is balanced, (ii) $T^u$ and $T^l$ are both $\uptri$-heavy,
    or (iii) $T^u$ and $T^l$ are both $\dntri$-heavy.

    The matrices $Z(T^u)$ and $Z(T^l)$ have maximal rank by Theorem~\ref{thm:type-one} if the characteristic of $K$ is
    at least $\flfr{a - \alpha + \beta + c}{2}$ and $\flfr{\alpha + b + c}{2}$, respectively. Combining non-vanishing
    maximal minors of $Z(T^u)$ and $Z(T^l)$, if follows that the matrix $Z$ has maximal rank as well if $\charf K \geq \flfr{\alpha + b + c}{2}$.
\end{proof}

In order to fully extend Theorem~\ref{thm:type-two} to sufficiently large positive characteristics, it remains to consider the
case where $T^u$ is $\uptri$-heavy and $T^l$ is $\dntri$-heavy. This is more delicate.

\begin{example}
    Let $T = T_{10}(x^8, y^8, z^8, x^3 y^5, x^3 z^6)$ as in Figure~\ref{fig:type-2-case-8}, and let $T'$ be the maximal
    minor given in Figure~\ref{fig:type-2-case-8}(i). In each lozenge tiling of $T'$, there is exactly one lozenge that
    crosses the splitting line. There are four possible locations for this lozenge; one of these is illustrated in
    Figure~\ref{fig:type-2-case-8}(ii). The enumeration of lozenge tilings of $T'$ is thus the sum of the lozenge
    tilings with the lozenge in each of the four places along the splitting line. Each of the summands is the product of
    the enumerations of the resulting upper and lower regions. In particular, we have that
    \begin{equation*}
        \begin{split}
            |\det{N(T')}| &= 20 \cdot 60 + 45 \cdot 64 + 60 \cdot 60 + 50 \cdot 48 \\
                          &= 2^4 \cdot 3 \cdot 5^2 + 2^6 \cdot 3^2 \cdot 5 + 2^4 \cdot 3^2 \cdot 5^2 + 2^5 \cdot 3 \cdot 5^2 \\
                          &= 2^5 \cdot 3^2 \cdot 5 \cdot 7 \\
                          &= 10080.
        \end{split}
    \end{equation*}
    Notice that while the four summands only have prime factors of $2$, $3$, and $5$, the final enumeration also has a
    prime factor of $7$.
\end{example}

Still, we can give a bound in this case, though we expect that it is very conservative. It provides the following
extension of Theorem~\ref{thm:type-two}.

\begin{proposition}\label{pro:type2-pos-char}
    Let $R/I$ be a Artinian monomial algebra of type 2 such that $R/I$ has the weak Lefschetz property in characteristic
    zero. Then $R/I$ has the weak Lefschetz property in positive characteristic, provided
    $\charf K \geq 3^e$, where $e = \frac{1}{2}\binom{\frac{1}{2} (a+b+c) + 2}{2}$.
\end{proposition}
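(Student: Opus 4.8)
The plan is to reuse the case analysis from the proof of Theorem~\ref{thm:type-two}, dispatching every configuration except one by the argument already given for Corollary~\ref{cor:type-two-pos-char}, and to treat the remaining delicate configuration by bounding an honest count of tilings.

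First I would fix $p = \charf K \ge 3^e$ and, via Corollary~\ref{cor:wlp-biadj}, reduce to showing that $Z(T_d(I))$ has maximal rank over $K$ for every $d \ge 2$. Writing $Z$ in the block form \eqref{eq:block-Z} with upper and lower portions $T^u$ and $T^l$, I would first rule out the configuration in which $T^u$ is $\dntri$-heavy and $T^l$ is $\uptri$-heavy: by Remark~\ref{rem:type2} this forces Condition~\eqref{eqn:type-two}, so the standing hypothesis that $R/I$ has the weak Lefschetz property in characteristic zero guarantees it never occurs. If instead, for the given $d$, neither portion favors triangles of the opposite orientation---cases (i), (ii), (iii) in the proof of Theorem~\ref{thm:type-two}---then $Z(T^u)$ and $Z(T^l)$ have maximal rank by Theorem~\ref{thm:type-one}, whose characteristic threshold lies far below $3^e$, and combining their non-vanishing maximal minors shows $Z$ has maximal rank exactly as in Corollary~\ref{cor:type-two-pos-char}.

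This leaves the delicate case, where $T^u$ is $\uptri$-heavy and $T^l$ is $\dntri$-heavy. Here I would follow the reduction in the proof of Theorem~\ref{thm:type-two} verbatim: remove the appropriate uniquely tileable strips so that, by Proposition~\ref{pro:remove-unique-tileable}, the maximal rank of $Z(T_d(I))$ follows from that of $Z(\tilde T)$ for a balanced subregion $\tilde T$ that was shown there to be tileable and to have no floating punctures. By Proposition~\ref{pro:same-sign}, $\det Z(\tilde T) = \per Z(\tilde T) = \nu$, the number of lozenge tilings of $\tilde T$, and $\nu > 0$ since $\tilde T$ is tileable. Thus it suffices to prove the strict bound $\nu < 3^e$: then $\nu$ is a positive integer smaller than $p$, hence nonzero in $K$, so $Z(\tilde T)$---which is square because $\tilde T$ is balanced---has maximal rank over $K$, and therefore so does $Z(T_d(I))$.

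The heart of the argument, and the main obstacle, is the bound $\nu < 3^e$. I would obtain it from the permanent estimate $\per Z(\tilde T) \le 3^{\#\dntri(\tilde T)}$, valid because every downward-pointing triangle is adjacent to at most three upward-pointing triangles, so each row of the bi-adjacency matrix has at most three nonzero entries. It then remains to control $\#\dntri(\tilde T)$. Since $\tilde T \subseteq T_d(I)$ one has $\#\dntri(\tilde T) \le h_{R/I}(d-2)$, and because $I \supseteq (x^a, y^b, z^c)$ the Hilbert function satisfies $h_{R/I}(d-2) \le h_{R/(x^a, y^b, z^c)}(d-2)$. The genuinely computational step is then to show that the Hilbert function of the monomial complete intersection $(x^a, y^b, z^c)$ never exceeds $\frac{1}{2}\binom{s+2}{2} = e$, where $s = \frac{1}{2}(a+b+c)$; this is an inclusion--exclusion estimate for the peak value of that Hilbert function, and the factor $\frac{1}{2}$ is precisely what separates the needed bound from the trivial estimate $h \le \binom{\cdot}{2}$. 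Strictness of $\nu < 3^e$ comes for free, since the downward-pointing triangles along the boundary and along the punctures of $\tilde T$ have fewer than three present neighbors, forcing $\per Z(\tilde T) < 3^{\#\dntri(\tilde T)}$. As remarked, the resulting threshold $3^e$ is very conservative, exactly because the permanent estimate is far from tight.
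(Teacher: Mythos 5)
Your architecture is sound up to the last step: ruling out the ($\dntri$-heavy upper, $\uptri$-heavy lower) configuration via the characteristic-zero hypothesis, dispatching cases (i)--(iii) through Theorem~\ref{thm:type-one}, and reducing the delicate case to a balanced, tileable region $\tilde T$ with constant perfect matching sign all track the proof of Theorem~\ref{thm:type-two}. This is, however, a genuinely different and longer route than the paper's, which simply derives the statement from Lemma~\ref{lem:classify-type-two} and the general Proposition~\ref{pro:char-0-to-p}: reduce to the two critical degrees near the peak via Proposition~\ref{pro:wlp}, then apply Hadamard's inequality to the $0$--$1$ bi-adjacency matrices.

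The gap is your ``genuinely computational step.'' The claim that the Hilbert function of $R/(x^a,y^b,z^c)$ never exceeds $e=\frac12\binom{s+2}{2}$ with $s=\frac12(a+b+c)$ is false: for $a=b=c=2k$ the peak value is
\[
h_{R/(x^{2k},y^{2k},z^{2k})}(3k-1)=\binom{3k+1}{2}-3\binom{k+1}{2}=3k^2,
\]
while $e=\frac{(3k+2)(3k+1)}{4}\approx\frac94k^2$, so the inequality fails for every $k\ge 4$ (already $a=b=c=8$ gives peak $48>e=45.5$). Nor is this confined to irrelevant degrees: for $I=(x^8,y^8,z^8,x^7y^5,x^7z^5)$, which has the weak Lefschetz property in characteristic zero, the degree $d=12$ lands in your delicate case with $t_u=t_l=1$, so $\tilde T=T_{12}(I)$ is already balanced with $\#\dntri(\tilde T)=h_{R/I}(10)=48$; your chain of inequalities then yields only $\nu\le 3^{48}$, which does not beat $3^{45.5}$. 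The underlying issue is that the factor $\frac12$ in the exponent $e$ comes from Hadamard's inequality applied to the \emph{determinant} (each row of $Z$ has Euclidean norm at most $\sqrt3$, hence $|\det Z|\le 3^{h/2}$), and cannot be recovered by pairing the permanent bound $\per Z\le 3^{h}$ with a Hilbert-function estimate. The repair is straightforward: since $|\det Z(\tilde T)|=\per Z(\tilde T)$ in your situation, use Hadamard's bound $|\det Z(\tilde T)|\le 3^{h/2}$ together with the correct (and easy) estimate that the peak of $h_{R/(x^a,y^b,z^c)}$ occurs in degree at most $s-1$ and is therefore less than $\binom{s+2}{2}=2e$; with that substitution your argument closes.
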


This follows from Lemma~\ref{lem:classify-type-two} and the following general result, which provides an effective bound
for Corollary~\ref{lem:wlp-0-p} in the case of three variables.

\begin{proposition}\label{pro:char-0-to-p}
    Let $R/I$ be any Artinian monomial algebra such that $R/I$ has the weak Lefschetz property in characteristic zero.
    If $I$ contains the powers $x^a, y^b, z^c$, then $R/I$ has the weak Lefschetz property in positive characteristic
    whenever $\charf K > 3^{\frac{1}{2}\binom{\frac{1}{2} (a+b+c) + 2}{2}}$.
\end{proposition}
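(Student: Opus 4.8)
The plan is to combine the matrix characterization of the weak Lefschetz property with an elementary Hadamard bound on the relevant determinants. By Corollary~\ref{cor:wlp-biadj}, $R/I$ has the weak Lefschetz property in a given characteristic if and only if, for every integer $d$, the zero-one matrix $Z(T_d(I))$ has maximal rank. Assume this holds in characteristic zero. Then for each $d$ some maximal minor of $Z(T_d(I))$ is a nonzero \emph{integer}, and, exactly as in the proof of Corollary~\ref{lem:wlp-0-p}, such a minor stays nonzero in $K$ — so $Z(T_d(I))$ keeps maximal rank — as soon as $\charf K$ exceeds its absolute value. Since $R/I$ is Artinian, only finitely many $d$ matter, so it suffices to produce, uniformly in $d$, a nonzero maximal minor of absolute value at most $3^{e}$.

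First I would bound an arbitrary such minor. By Remark~\ref{rem:minor as a region}, every maximal minor of $Z(T_d(I))$ equals $\pm\det Z(U)$ for a balanced subregion $U \subseteq T_d(I)$, where $Z(U)$ is a square zero-one matrix of size $N := \min\{\#\dntri(T_d(I)), \#\uptri(T_d(I))\}$. Each row of $Z(U)$ is indexed by a downward-pointing triangle, whose label $m \in [R]_{d-2}$ is adjacent only to the at most three upward-pointing triangles labelled $xm, ym, zm$; hence every row has at most three nonzero entries. Hadamard's inequality then gives
\[
    |\det Z(U)| \;\le\; \prod_{\text{rows}} \sqrt{3} \;=\; 3^{N/2}.
\]

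It remains to bound $N$ independently of $d$. Since $N \le \max_j h_{R/I}(j)$ and $(x^a,y^b,z^c) \subseteq I$, we have $N \le \max_j h_{R/(x^a,y^b,z^c)}(j)$. The complete intersection $R/(x^a,y^b,z^c)$ is Gorenstein with socle degree $\sigma = a+b+c-3$, so its Hilbert function is symmetric, $h(j) = h(\sigma-j)$, and is bounded above by $\dim_K[R]_j = \binom{j+2}{2}$. Combining these, $h(j) \le \binom{\min\{j,\sigma-j\}+2}{2} \le \binom{\lfloor \sigma/2\rfloor + 2}{2}$. Writing $D = \tfrac12(a+b+c)$, one checks $\lfloor\sigma/2\rfloor + 2 \le D + \tfrac12 < D+2$, so monotonicity of $x \mapsto \tfrac12 x(x-1)$ yields $N \le \binom{D+2}{2}$. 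Hence $|\det Z(U)| \le 3^{\frac12\binom{D+2}{2}} = 3^{e}$ for every $d$, and the chosen minor is a nonzero integer of absolute value at most $3^{e}$. Therefore, whenever $\charf K > 3^{e}$, each $Z(T_d(I))$ keeps maximal rank over $K$, and $R/I$ has the weak Lefschetz property.

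The technical heart is the uniform estimate on $N$: one must guarantee that the size of the balanced minor never exceeds $\binom{D+2}{2}$, which is precisely where the symmetry of the complete-intersection Hilbert function is used to control $\max_j h_{R/I}(j)$. The Hadamard step, by contrast, is immediate from the row sums of the bi-adjacency matrix being at most three. I expect the resulting constant to be far from optimal, in agreement with the remark preceding the statement that this bound is very conservative.
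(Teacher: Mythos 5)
Your proof is correct, and its core --- Hadamard's inequality applied to the zero-one bi-adjacency matrices, whose rows have at most three nonzero entries because a downward-pointing triangle labelled $m$ is adjacent only to the triangles labelled $xm$, $ym$, $zm$ --- is exactly the paper's. Where you genuinely diverge is in how the size $N$ of the relevant square submatrices is bounded uniformly in $d$. The paper first reduces, via Proposition~\ref{pro:wlp} and the observation that the socle of $R/I$ is independent of the characteristic, to checking only the two matrices $Z(T_d(I))$ and $Z(T_{d+1}(I))$ at the peak degree, and then bounds their size by $h_{R/I}(d-1) < h_R(d-1) = \binom{d+1}{2}$ together with the estimate $d-1 \leq \frac{1}{2}(a+b+c)$ obtained from Lemma~\ref{lem:h-ci} applied to $I' = (x^a,y^b,z^c)$. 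You instead test every degree but control $N \leq \max_j h_{R/I}(j) \leq \max_j h_{R/(x^a,y^b,z^c)}(j)$ by exploiting the symmetry of the Gorenstein complete-intersection Hilbert function; this avoids the peak-reduction step (and the attendant use of the characteristic-zero hypothesis to locate the socle degrees) at the cost of the small computation with $\lfloor \sigma/2 \rfloor$. Both routes land on $N \leq \binom{\frac{1}{2}(a+b+c)+2}{2} = 2e$. A minor point in your favour: your estimate $|\det Z(U)| \leq 3^{N/2} \leq 3^{e}$ is the sharp form of Hadamard's inequality needed to conclude that no prime $p > 3^{e}$ divides a nonzero minor, whereas the paper's write-up only records the weaker bound $3^{2e}$ before drawing the same conclusion; your version makes that final step airtight.
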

\begin{proof}
    Define $I' = (x^a, y^b, z^c)$, and let $d'$ be the smallest integer such that $0 \neq h_{R/I'}(d'-1) \geq h_{R/I'}(d')$.
    Thus, $d' - 1 \leq \frac{1}{2}(a+b+c)$ by Lemma~\ref{lem:h-ci}.

    Let $d$ be the smallest integer such that $0 \neq h_{R/I}(d-1) \geq h_{R/I}(d)$. Then $d \leq d'$, as $I' \subset I$
    and adding or enlarging punctures only exacerbates the difference in the number of upward- and downward-pointing
    triangles. Since $R/I$ has the weak Lefschetz property in characteristic zero, the Hilbert function of $R/I$ is
    strictly increasing up to degree $d-1$. Hence, Proposition~\ref{pro:wlp} implies that the degrees of non-trivial
    socle elements of $R/I$ are at least $d-1$. The socle of $R/I$ is independent of the characteristic of $K$.
    Therefore Proposition~\ref{pro:wlp} shows that, in any characteristic, $R/I$ has the weak Lefschetz property if and
    only if the bi-adjacency matrices of $T_d (I)$ and $T_{d+1} (I)$ have maximal rank. Each row and column of a
    bi-adjacency matrix has at most three entries that equal one. All other entries are zero. Moreover the maximal
    square sub-matrices of $Z(T_d (I))$ and $Z(T_{d+1} (I))$ have at most $h_{R/I} (d-1)$ rows. Since
    $h_{R/I} (d-1) < h_R (d-1) = \binom{d+1}{2} \leq 3e$, Hadamard's inequality shows
    that the absolute values of the maximal minors of $Z(T_d (I))$ and $Z(T_{d+1} (I))$, considered as integers, are
    less than $3^{2e}$. Hence, any prime number $p \geq 3^e$ does not divide any of these non-trivial maximal minors.
\end{proof}

As indicated above, we believe that the bound in Proposition~\ref{pro:type2-pos-char} is far from being optimal.
Through a great deal of computer experimentation, we offer the following conjecture.

\begin{conjecture} \label{con:type-two-pos-char}
    Let $I$ be an Artinian monomial ideal in $R = K[x,y,z]$ such that $R/I$ is of type two. If $R/I$ has the weak
    Lefschetz property in characteristic zero, then $R/I$ also has the weak Lefschetz property in characteristics
    $p > \frac{1}{2}(a+b+c)$.
\end{conjecture}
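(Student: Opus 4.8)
The plan is to combine the geometric case analysis underlying Theorem~\ref{thm:type-two} with closed-form determinant evaluations, so as to replace the crude Hadamard estimate of Proposition~\ref{pro:char-0-to-p} by genuine product formulas. By Lemma~\ref{lem:classify-type-two} and Theorem~\ref{thm:type-two}, the hypothesis that $R/I$ has the weak Lefschetz property in characteristic zero forces $I$ to have either four generators, or five generators $I=(x^a,y^b,z^c,x^{\alpha}y^{\beta},x^{\alpha}z^{\gamma})$ for which Condition~\eqref{eqn:type-two} is unsatisfiable. Fixing $\ell=x+y+z$ and an integer $d\ge 2$, Corollary~\ref{cor:wlp-biadj} reduces everything to showing that $Z(T_d(I))$ attains maximal rank modulo $p$ whenever $p>\frac{1}{2}(a+b+c)$. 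Splitting $T=T_d(I)$ along the line at height $\alpha$ into its upper portion $T^u$ and lower portion $T^l$ yields the block form~\eqref{eq:block-Z}, and the behaviour of $Z$ is controlled entirely by the orientations of $T^u$ and $T^l$.

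First we would dispose of every degree $d$ for which $T^u$ or $T^l$ is balanced, or both are $\dntri$-heavy, or both are $\uptri$-heavy. Here the argument of Corollary~\ref{cor:type-two-pos-char} applies verbatim, degree by degree: both $Z(T^u)$ and $Z(T^l)$ are bi-adjacency matrices of complete intersections, so by Theorem~\ref{thm:type-one} each has a non-vanishing maximal minor once $\charf K\ge\flfr{\alpha+b+c}{2}$, and combining these produces a non-vanishing maximal minor of $Z$. As $\alpha<a$ gives $\flfr{\alpha+b+c}{2}\le\frac{1}{2}(a+b+c)<p$, these degrees are settled. The unsatisfiability of Condition~\eqref{eqn:type-two} means the pattern ``$T^u$ is $\dntri$-heavy and $T^l$ is $\uptri$-heavy'' never occurs, so the only degrees left are the \emph{delicate} ones, where $T^u$ is $\uptri$-heavy and $T^l$ is $\dntri$-heavy; these are exactly the $d$ satisfying Condition~\eqref{eq:Cond-on-d}.

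For a delicate degree $d$ we would follow the reduction in the proof of Theorem~\ref{thm:type-two}: removing the uniquely tileable boundary strips and invoking Proposition~\ref{pro:remove-unique-tileable} replaces $T$ by a balanced, simply connected region $\tilde T$ with no floating punctures, for which $Z(T)$ has maximal rank if and only if $\det Z(\tilde T)\ne 0$. By Proposition~\ref{pro:same-sign}, $|\det Z(\tilde T)|=\per Z(\tilde T)$ equals the number of lozenge tilings of $\tilde T$, so the entire statement reduces to the arithmetic claim that this tiling count has no prime divisor exceeding $\frac{1}{2}(a+b+c)$. The essential step is to evaluate the count in closed form. Passing to the lattice path matrix $N(\tilde T)$ via Theorem~\ref{thm:detZN}, the upper/lower split places the starting vertices at two distinct heights, so $N(\tilde T)$ is a binomial matrix whose columns fall into two blocks, exactly the shape treated by a \cite{CEKZ}-type evaluation. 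The goal is to extend Lemma~\ref{lem:split-binom-det} (which powers the four-puncture formula of Proposition~\ref{pro:two-mahonian}) to the present five-puncture configuration, producing an expression that is a product of hyperfactorials $\HF(\cdot)$ and Mahonian terms $\Mac(\cdot,\cdot,\cdot)$ whose parameters are bounded above by $d$; combined with the inequalities cutting out the delicate regime in Condition~\eqref{eq:Cond-on-d}, this should force every prime divisor to be at most $\frac{1}{2}(a+b+c)$, as required.

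The hard part will be precisely this final determinant evaluation. The example following Proposition~\ref{pro:type2-pos-char} exhibits the danger concretely: expanding over the possible positions of the lozenges crossing the splitting line writes the tiling count as a sum of products, and that sum acquires a prime factor ($7$) absent from every individual summand, so no term-by-term estimate can succeed. One genuinely needs a single closed product form for $\det N(\tilde T)$, and it is unclear whether the five-puncture family admits a uniform Mahonian-type evaluation for all admissible $(a,b,c,\alpha,\beta,\gamma)$; indeed the opaque third factor already present in Proposition~\ref{pro:two-mahonian} (see Remark~\ref{rem:two-mahonian}) suggests that pinning down the correct generalization, and verifying that its parameters never exceed $\frac{1}{2}(a+b+c)$, is exactly the obstruction that keeps this statement a conjecture rather than a theorem.
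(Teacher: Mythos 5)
You are attempting to prove a statement that the paper itself does not prove: it is stated as Conjecture~\ref{con:type-two-pos-char}, offered on the basis of computer experimentation, and the strongest result the paper obtains in its direction is Proposition~\ref{pro:type2-pos-char}, whose bound $3^e$ comes from Hadamard's inequality and is enormously weaker than $\frac{1}{2}(a+b+c)$. So there is no proof to compare yours against; the only question is whether your argument closes the gap on its own, and it does not. To your credit, the parts you do carry out match the paper's partial progress exactly: the degrees where $T^u$ and $T^l$ do not favor opposite orientations are disposed of precisely as in Corollary~\ref{cor:type-two-pos-char} (one small correction: the bound Theorem~\ref{thm:type-one} gives for $Z(T^u)$ is $\flfr{a-\alpha+\beta+\gamma}{2}$, not $\flfr{\alpha+b+c}{2}$, though both are indeed at most $\frac{1}{2}(a+b+c)$), the pattern ``$T^u$ $\dntri$-heavy, $T^l$ $\uptri$-heavy'' is excluded by the characteristic-zero hypothesis via Theorem~\ref{thm:type-two}, and the delicate degrees satisfying \eqref{eq:Cond-on-d} reduce, through Propositions~\ref{pro:remove-unique-tileable} and~\ref{pro:same-sign}, to bounding the prime divisors of a single tiling count. (One quibble: that reduction passes to one particular maximal minor $T'$ of $T$, so ``$Z(T)$ has maximal rank if and only if $\det Z(\tilde T)\neq 0$'' should be only ``if''; fortunately sufficiency is all your plan requires.)

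The genuine gap is the step you yourself flag: you never produce the closed-form evaluation of $\det N(\tilde T)$ for the balanced five-puncture region, and without it the claim that every prime divisor is at most $\frac{1}{2}(a+b+c)$ has no support. This is not a routine extension of Lemma~\ref{lem:split-binom-det}. In Proposition~\ref{pro:two-mahonian} all end points $E_j$ lie along a single puncture, so every entry of $N(T)$ is a binomial coefficient with the \emph{same} upper parameter $c$ and the matrix falls under the evaluation \cite[Eq.~(12.5)]{CEKZ}; with the fifth puncture $x^{\alpha}z^{\gamma}$ present, the end points are distributed over punctures of different sizes, the entries carry different upper parameters, and no formula of that shape applies. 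Your own citation of the example after Proposition~\ref{pro:type2-pos-char} shows why nothing cheaper can work: the count $10080=2^5\cdot 3^2\cdot 5\cdot 7$ is a sum of products of hexagon counts whose prime factors are all small, yet the sum acquires the new prime $7$, so any termwise or divisor-tracking argument collapses and a genuine product formula is unavoidable. In short, your proposal is a sensible research program that correctly isolates the obstruction, and its honest conclusion is the correct one: the decisive determinant evaluation is missing, and the statement remains open --- which is exactly why the paper records it as a conjecture rather than a theorem.
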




\end{document}